\newtheorem{theorem}{Theorem}[section] 
\newtheorem{lemma}[theorem]{Lemma}
\newtheorem{proposition}[theorem]{Proposition}
\newtheorem{conjecture}[theorem]{Conjecture}
\theoremstyle{definition}
\newtheorem{definition}[theorem]{Definition}
\newtheorem{example}[theorem]{Example}
\newtheorem{remark}[theorem]{Remark}
\DeclareMathOperator{\C}{\mathrm{C}}
\DeclareMathOperator{\CFC}{\mathrm{CFC}}
\DeclareMathOperator{\supp}{supp}
\DeclareMathOperator{\cyclesupp}{supp_{cycle}}
\DeclareMathOperator{\FC}{FC}
\DeclareMathOperator{\Acyc}{Acyc}
\newcommand{\Z}{\mathbb{Z}}
\newcommand{\N}{\mathbb{N}}
\newcommand{\x}{\mathsf{x}}
\newcommand{\y}{\mathsf{y}}
\newcommand{\w}{\mathsf{w}}
\renewcommand{\u}{\mathsf{u}}
\newcommand{\gen}[1]{\langle #1 \rangle}
\renewcommand{\mapsto}{\longmapsto}
\newcommand{\ds}{\displaystyle}
\providecommand{\abs}[1]{\left\lvert#1\right\rvert}
\definecolor{orange}{RGB}{255,102,0}
\definecolor{ggreen}{RGB}{0,153,0}
\definecolor{darkblue}{RGB}{0,0,255}
\definecolor{purple}{RGB}{153,51,255}
\definecolor{turq}{RGB}{72,209,204}
\definecolor{gray}{RGB}{220,220,220}
\newcommand{\boxbl}[1]{\textcolor{blue}{%
\tikz[baseline={([yshift=-1ex]current bounding box.center)}] \node [rectangle, minimum width=1ex,rounded corners,draw,line width=1.2pt] {\normalcolor\m@th$\displaystyle#1$};}}
\newcommand{\boxor}[1]{\textcolor{orange}{%
\tikz[baseline={([yshift=-1ex]current bounding box.center)}] \node [rectangle, minimum width=1ex,rounded corners,draw,line width=1.2pt] {\normalcolor\m@th$\displaystyle#1$};}}
\newcommand{\boxt}[1]{\textcolor{turq}{%
\tikz[baseline={([yshift=-1ex]current bounding box.center)}] \node [rectangle, minimum width=1ex,rounded corners,draw,line width=1.2pt] {\normalcolor\m@th$\displaystyle#1$};}}
\newcommand{\boxgr}[1]{\textcolor{ggreen}{%
\tikz[baseline={([yshift=-1ex]current bounding box.center)}] \node [rectangle, minimum width=1ex,rounded corners,draw,line width=1.2pt] {\normalcolor\m@th$\displaystyle#1$};}}
\newcommand{\boxp}[1]{\textcolor{purple}{%
\tikz[baseline={([yshift=-1ex]current bounding box.center)}] \node [rectangle, minimum width=1ex,rounded corners,draw,line width=1.2pt] {\normalcolor\m@th$\displaystyle#1$};}}
\newcommand{\boxm}[1]{\textcolor{magenta}{%
\tikz[baseline={([yshift=-1ex]current bounding box.center)}] \node [rectangle, minimum width=1ex,rounded corners,draw,line width=1.2pt] {\normalcolor\m@th$\displaystyle#1$};}}
\newcommand{\boxr}[1]{\textcolor{red}{%
\tikz[baseline={([yshift=-1ex]current bounding box.center)}] \node [rectangle, minimum width=1ex,rounded corners,draw,line width=1.2pt] {\normalcolor\m@th$\displaystyle#1$};}}
\renewcommand{\boxed}[1]{\textcolor{black}{%
\tikz[baseline={([yshift=-1ex]current bounding box.center)}] \node [rectangle, minimum width=1ex,rounded corners,draw,line width=1.2pt] {\normalcolor\m@th$\displaystyle#1$};}}
\newcommand\xxaxis{0}
\newcommand\yyaxis{90}
\newcommand\sq[2]{
    \fill[fill=gray!30, draw=black, rounded corners, line width=1pt, shift={(\xxaxis:#1)}, shift={(\yyaxis:#2)}] 
    (0,0) -- (1,0) -- (1,-1) -- (0,-1) -- cycle; }
\newcommand\sqor[2]{
    \fill[draw=orange, fill=orange!10, line width=1.1pt, rounded corners, shift={(\xxaxis:#1)}, shift={(\yyaxis:#2)}]
    (0,0) -- (1,0) -- (1,-1) -- (0,-1) -- cycle; }
\newcommand\sqorhash[2]{
    \fill[pattern=north east lines, pattern color=orange!35, draw=orange, line width=1.1pt, rounded corners, shift={(\xxaxis:#1)}, shift={(\yyaxis:#2)}]
    (0,0) -- (1,0) -- (1,-1) -- (0,-1) -- cycle; }
\newcommand\sqgr[2]{
    \fill[draw=ggreen, fill=ggreen!05, line width=1.1pt, rounded corners, shift={(\xxaxis:#1)}, shift={(\yyaxis:#2)}]
    (0,0) -- (1,0) -- (1,-1) -- (0,-1) -- cycle; }
\newcommand\sqgrhash[2]{
    \fill[pattern=north east lines, pattern color=ggreen!35, draw=ggreen, line width=1.1pt, rounded corners, shift={(\xxaxis:#1)}, shift={(\yyaxis:#2)}]
    (0,0) -- (1,0) -- (1,-1) -- (0,-1) -- cycle; }
\newcommand\sqgrcheck[2]{
    \fill[pattern=checkerboard, pattern color=ggreen!06, draw=ggreen, line width=1.1pt, rounded corners, shift={(\xxaxis:#1)}, shift={(\yyaxis:#2)}]
    (0,0) -- (1,0) -- (1,-1) -- (0,-1) -- cycle; }
\newcommand\sqbl[2]{
    \fill[draw=darkblue, fill=darkblue!05, line width=1.1pt, rounded corners, shift={(\xxaxis:#1)}, shift={(\yyaxis:#2)}]
    (0,0) -- (1,0) -- (1,-1) -- (0,-1) -- cycle; }
\newcommand\sqblhash[2]{
    \fill[pattern=north east lines, pattern color=darkblue!35, draw=darkblue, line width=1.1pt, rounded corners, shift={(\xxaxis:#1)}, shift={(\yyaxis:#2)}]
    (0,0) -- (1,0) -- (1,-1) -- (0,-1) -- cycle; }
\newcommand\sqblcheck[2]{
    \fill[pattern=checkerboard, pattern color=darkblue!06, draw=darkblue, line width=1.1pt, rounded corners, shift={(\xxaxis:#1)}, shift={(\yyaxis:#2)}]
    (0,0) -- (1,0) -- (1,-1) -- (0,-1) -- cycle; }
\newcommand\sqm[2]{
    \fill[draw=magenta, fill=magenta!08, line width=1.1pt, rounded corners, shift={(\xxaxis:#1)}, shift={(\yyaxis:#2)}]
    (0,0) -- (1,0) -- (1,-1) -- (0,-1) -- cycle; }
\newcommand\bsq[2]{
    \fill[fill=white, dotted, draw=black, line width=0.5pt, rounded corners, shift={(\xxaxis:#1)},shift={(\yyaxis:#2)}]
    (0.05,-0.05) -- (0.95,-0.05) -- (0.95,-0.95) -- (0.05,-0.95) -- cycle; }
\begin{document}

\title{Conjugacy classes of cyclically fully commutative elements in Coxeter groups of type~$A$}
\author{Brooke Fox}


\begin{titlepage}
\ 

\vfill

\begin{center}
{\Large\textbf{Conjugacy classes of cyclically fully commutative\\
elements in Coxeter groups of type $A$}}

\vspace{.5cm}

MS Thesis, Northern Arizona University, 2014
\end{center}

\vspace{1cm}

\noindent Brooke Fox\\
Northern Arizona University\\
Department of Mathematics and Statistics\\
Northern Arizona University\\
Flagstaff, AZ 86011\\
\url{bkf23@nau.edu}\\
\\
Advisor: Dana C.~Ernst, PhD\\
Second Reader: Michael Falk, PhD\\
Third Reader: Stephen Wilson, PhD

\vfill

\end{titlepage}

\pagenumbering{roman}
\pagestyle{plain}


\chapter*{Abstract}

A fundamental result of Coxeter groups, known as Matsumoto's theorem, states that any two reduced expressions of the same element differ by a sequence of commutations and braid moves. If two elements have expressions that are cyclic shifts of each other (as words), then they are conjugate (as group elements). We say that an expression is cyclically reduced if every cyclic shift of it is reduced, and ask the following question, where an affirmative answer would be a ``cyclic version'' of Matsumoto's theorem. \emph{Do two cyclically reduced expressions of conjugate elements differ by a sequence of braid relations and cyclic shifts}? While the answer is, in general, ``no,'' understanding when the answer is ``yes'' is a central focus of a broad ongoing research project. It was recently shown to hold for all Coxeter elements.  

A Coxeter element is a special case of a fully commutative element, which is any element with the property that any two reduced expressions are equivalent by only commutations. In this thesis, we study the cyclically fully commutative elements. These are the elements for which every cyclic shift of any reduced expression is a reduced expression of a fully commutative element. In this light, the cyclically fully commutative elements are the ``cyclic version'' of the fully commutative elements. In particular, the cyclic version of Matsumoto's theorem for the cyclically fully commutative elements asks when two reduced expressions for conjugate elements are equivalent via only commutations and cyclic shifts.  

In this thesis, we study the combinatorics of cyclically fully commutative elements in Coxeter groups of type $A$ as it relates to conjugacy. In particular, we introduce the notion of cylindrical heaps and ring equivalence in order to state our main result, which says that two cyclically fully commutative elements of a Coxeter group of type $A$ are conjugate if and only if their corresponding cylindrical heaps are ring equivalent.


\tableofcontents
\listoffigures


\chapter{Preliminaries}

\pagenumbering{arabic}

\section{Introduction}
    This thesis is organized as follows.
    After necessary background material on Coxeter groups is presented in Section~\ref{sec:coxeter}, we introduce the class of fully commutative elements in Section~\ref{sec:FC}. Then, in Section~\ref{sec:heaps}, we discuss a visual representation for  elements of Coxeter groups, called heaps.
    The cyclically fully commutative elements, introduced in Section~\ref{sec:CFC}, are exactly those elements that are fully commutative when written in a circle and can be thought of as a generalization of Coxeter elements.
    In Section~\ref{sec:Sn}, we explore the connection between Coxeter groups of type $A_n$ and the symmetric group $S_{n+1}$. We also state a conjecture about the permutations corresponding of cyclically fully commutative elements (Conjecture~\ref{conjecture}).
    Finally, in Section~\ref{sec:chunks}, we introduce the notion of cylindrical heaps and ring equivalence in order to state the main result of this thesis (Theorem~\ref{thm:conjiffring}), which says that two cyclically fully commutative elements of a Coxeter group of type $A_n$ are conjugate if and only if their corresponding cylindrical heaps are ring equivalent.
    The last section states and proves several lemmas used to prove the main result.

\section{Coxeter groups}\label{sec:coxeter}
    A \emph{Coxeter system} is a pair $(W,S)$ consisting of a finite set $S$ of generating involutions and a group $W$, called a \emph{Coxeter group}, with presentation
    $$W = \gen{S \mid (st)^{m(s,t)} = e ~\text{for}~ m(s,t) < \infty },$$
    where $e$ is the identity, $m(s,t) = 1$ if and only if $s = t$, and $m(s,t) = m(t,s)$.
    It follows that the elements of $S$ are distinct as group elements and that $m(s,t)$ is the order of $st$~\cite{Humphreys1990}.
    We call $m(s,t)$ the \emph{bond strength} of $s$ and $t$.
    Coxeter groups are generalizations of reflection groups. Each generator $s \in S$ can be thought of as a reflection. Recall that the composition of two reflections is a rotation by twice the angle between the corresponding hyperplanes. So, if $s,t \in S$, we can think of $st$ as a rotation, where $m(s,t)$ is the order of the rotation.

    Since elements of $S$ have order two, the relation $(st)^{m(s,t)} = e$ can be written as
\begin{equation}\label{braid} \underbrace{sts \cdots}_{m(s,t)} = \underbrace{tst \cdots}_{m(s,t)} \end{equation}
    with $m(s,t) \geq 2$ factors.
    If $m(s,t) = 2$, then $st = ts$ is called a \emph{commutation relation} since $s$ and $t$ commute. If $m(s,t) \geq 3$, then the relation in \eqref{braid} is called a \emph{braid relation}.
    We will write $\gen{st}_{m(s,t)}$ to denote the word $sts \cdots$ consisting of $m(s,t)$ factors.
    Replacing $\gen{st}_{m(s,t)}$ with $\gen{ts}_{m(s,t)}$ will be referred to as a \emph{commutation} if $m(s,t) = 2$ and a \emph{braid move} if $m(s,t) \geq 3$.
    
    We can represent the Coxeter system $(W,S)$ with a unique \emph{Coxeter graph} $\Gamma$ having
\begin{enumerate}[leftmargin=0.75in,label=(\alph*)]
    \item vertex set $S = \{s_1, \ldots, s_n\}$ and
    \item edges $\{s_i,s_j\}$ for each $m(s_i,s_j) \geq 3$.
\end{enumerate} 
    Each edge $\{s_i,s_j\}$ is labeled with its corresponding bond strength $m(s_i,s_j)$. Since bond strength 3 is the most common, we typically omit the labels of 3 on those edges.
    
    There is a one-to-one correspondence between Coxeter systems and Coxeter graphs.
    Given a Coxeter graph $\Gamma$, we can construct the corresponding Coxeter system $(W,S)$.
    In this case, we say that $(W,S)$, or just $W$, is of type $\Gamma$. If $(W,S)$ is of type $\Gamma$, for emphasis, we may write $(W,S)$ as $(W(\Gamma),S(\Gamma))$.
    Note that generators $s_i$ and $s_j$ are connected by an edge in the Coxeter graph $\Gamma$ if and only if $s_i$ and $s_j$ do not commute~\cite{Humphreys1990}.
    Also, if $\Gamma$ is connected, then we say that $\Gamma$, or $W(\Gamma)$, is \emph{irreducible}.

    The Coxeter system of type $A_n$ is given by the Coxeter graph in Figure~\ref{fig:A}. We can construct $(W(A_n),S(A_n))$ having the generating set $S(A_n) = \{s_1, s_2, \ldots, s_n\}$ and defining relations
\begin{enumerate}[leftmargin=0.75in, label=(\alph*)]
    \item $s_is_i = e$ for all $i$;
    \item $s_is_j = s_js_i$ when $\abs{i-j} > 1$;
    \item $s_is_js_i = s_js_is_j$ when $\abs{i-j} = 1$.
\end{enumerate}
    The Coxeter group $W(A_n)$ is isomorphic to the symmetric group $S_{n+1}$ under the mapping that sends $s_i$ to the adjacent transposition $(i~i+1)$.
    This thesis focuses specifically on Coxeter systems of type $A_n$.

\begin{definition} Let $S^*$ denote the free monoid over $S$. If a word $\w=s_{x_1}s_{x_2}\cdots s_{x_m}\in S^*$ is equal to $w$ when considered as an element of $W$, we say that $\w$ is an \emph{expression} for $w$.
    (Expressions will be written in {\sf sans serif} font for clarity.) Furthermore, if $m$ is minimal among all possible expressions for $w$, we say that $\w$ is a \emph{reduced expression} for $w$, and we call $m$ the \emph{length} of $w$, denoted $\ell(w)$.
\end{definition}

    Each element $w \in W$ can have several different reduced expressions that represent it.
    The following theorem is called Matsumoto's Theorem.

\begin{theorem}[Matsumoto,~\cite{Boothby2012}] \label{thm:matsumoto} In a Coxeter group $W$, any two reduced expressions for the same group element differ by a sequence of commutations and braid moves. \qed
\end{theorem}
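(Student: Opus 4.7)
The plan is to proceed by induction on $\ell(w)$. The base cases $\ell(w) \le 1$ are immediate since $w$ then has a unique reduced expression. For the inductive step, let $\u = s_{i_1} s_{i_2} \cdots s_{i_k}$ and $\w = s_{j_1} s_{j_2} \cdots s_{j_k}$ be two reduced expressions of the same element $w$, and set $s := s_{i_1}$ and $t := s_{j_1}$.

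If $s = t$, I would strip off the common first letter from both expressions to obtain two reduced expressions of the shorter element $sw$. The inductive hypothesis produces a sequence of commutations and braid moves connecting them, and prepending $s$ to every intermediate word yields the required sequence connecting $\u$ to $\w$.

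If $s \ne t$, the key step is to exhibit a single reduced expression $\mathsf{z}$ for $w$ whose first $m := m(s,t)$ letters form $\gen{st}_m$. Because $\u$ and $\w$ are reduced and begin with $s$ and $t$ respectively, we have $\ell(sw) < \ell(w)$ and $\ell(tw) < \ell(w)$; a standard application of the Strong Exchange Condition (equivalently, a root-system argument applied to the rank-two parabolic generated by $s$ and $t$) then yields such a $\mathsf{z}$. A single braid move on its prefix produces a second reduced expression $\mathsf{z}'$ whose first $m$ letters spell $\gen{ts}_m$. Now $\u$ and $\mathsf{z}$ share the first letter $s$, so the previous case connects them; similarly $\w$ and $\mathsf{z}'$ share the first letter $t$ and are connected; and $\mathsf{z}$ is joined to $\mathsf{z}'$ by one braid move. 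Concatenating the three chains completes the induction.

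The main obstacle is establishing the lemma that furnishes the reduced expression beginning with $\gen{st}_m$. The cleanest route is geometric: assign to the $j$-th letter of a reduced expression the positive root $s_{i_1}\cdots s_{i_{j-1}}(\alpha_{s_{i_j}})$, verify that these roots are distinct, and translate the inequality $\ell(sw) < \ell(w)$ into a statement about the sign of $w^{-1}(\alpha_s)$. Combining this with the known structure of rank-two parabolic subgroups, which are themselves dihedral Coxeter groups in which Matsumoto's theorem can be checked by hand, forces the alternating prefix $\gen{st}_m$ to appear in some reduced expression of $w$. A purely combinatorial alternative proceeds by iterated applications of the Exchange Condition inside the dihedral parabolic $\gen{s,t}$, but in either approach the analysis of the rank-two case is the fulcrum of the argument.
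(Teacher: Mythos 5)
The paper does not prove this statement at all: Matsumoto's theorem is quoted as a known result (with citation) and used as a black box, so there is no in-paper argument to compare yours against. Judged on its own, your outline is the standard Tits--Matsumoto induction and is essentially correct: induct on $\ell(w)$, peel off a common first letter when the two reduced expressions agree there, and otherwise reduce to the key lemma that $\ell(sw)<\ell(w)$ and $\ell(tw)<\ell(w)$ together force a reduced expression for $w$ beginning with $\gen{st}_{m(s,t)}$, which you then rotate by one braid move. Two points deserve explicit care if you write this up in full. First, in a general Coxeter group $m(s,t)$ may be infinite, in which case there is no word $\gen{st}_{m(s,t)}$; the clean fix is to prove the key lemma via the parabolic coset decomposition $w=uv$ with $u\in W_{\{s,t\}}$ and $v$ a minimal coset representative, observing that the two descent conditions force $u$ to be the longest element of $W_{\{s,t\}}$, which in particular forces $m(s,t)<\infty$, so the problematic case simply cannot arise. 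Second, that key lemma carries essentially all the content of the theorem, so whichever route you choose for it (roots and the sign of $w^{-1}(\alpha_s)$, or iterated exchange inside the dihedral parabolic) needs to be written out rather than gestured at; once it is in place, the three-chain concatenation you describe closes the induction correctly.
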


    It follows from Matsumoto's Theorem that all reduced expressions for $w \in W$ have the same number of generators appearing in the expression.
    Let $w \in W$ and let $\w$ be a reduced expression for $w$. Then the \emph{support} of $\w$, denoted $\supp(\w)$, is the set of generators that appear in $\w$.
    Also from Matsumoto's Theorem we have that $s$ appears in a reduced expression for $w$ if and only if $s$ appears in every reduced expression for $w$, so we can define the support of a group element.
    Define $\supp(w)$ to be the set of generators appearing in any reduced expression for $w$.
    If $\supp(w) = S$, we say that $w$ has \emph{full support}.
    
    Given a reduced expression $\w$ for $w \in W$, we define a \emph{subexpression} of $\w$ to be any subsequence of $\w$. We will refer to a subexpression consisting of a string of consecutive symbols from $\w$ as a \emph{subword} of $\w$.

\begin{example}\label{ex:subword} Let $w \in W(A_6)$ and let $\w = s_1 s_2 s_4 s_5 s_2 s_6 s_5$ be an expression for $w$. Then we have $$s_1 \textcolor{magenta}{s_2 s_4} s_5 s_2 s_6 s_5
    = s_1 s_4 \textcolor{magenta}{s_2 s_5} s_2 s_6 s_5
    = s_1 s_4 s_5 \textcolor{ggreen}{s_2 s_2} s_6 s_5
    = s_1 s_4 s_5 s_6 s_5,$$
    where the \textcolor{magenta}{pink} subword denotes applying a commutation to the corresponding generators to obtain the next expression and the \textcolor{ggreen}{green} subword denotes canceling two adjacent occurrences of the same generator.
    So, $\w$ is not reduced. It turns out that $s_1 s_4 s_5 s_6 s_5$ is a reduced expression for $w$ and $\supp(w) = \{s_1,s_2,s_4,s_5,s_6\}$. Hence $\ell(w) = 5$.
\end{example}

\begin{example} Let $W$ be the Coxeter group of type $A_4$, and let $w \in W$ have reduced expression $\w = s_1s_2s_3s_4s_2$. Then the set of all the reduced expressions for $w$ is
    $$\{s_1s_2s_3\textcolor{magenta}{s_4s_2}, s_1\textcolor{blue}{s_2s_3s_2}s_4, \textcolor{magenta}{s_1s_3}s_2s_3s_4, s_3s_1s_2s_3s_4\},$$
    where the \textcolor{magenta}{pink} subword denotes applying a commutation and the \textcolor{blue}{blue} subword denotes applying a braid relation to get to the next reduced expression. Then $\ell(w) = 5$ and $w$ has full support.
\end{example}
    
\begin{definition} A \emph{Coxeter element} is an element $w \in W$ for which every generator appears exactly once in each reduced expression for $w$. 
\end{definition}

    Note that $\supp(w) = S$ for a Coxeter element $w$. The set of Coxeter elements of $W$ is denoted by $\C(W)$.
    
\begin{example} \label{ex:Coxelt} Consider the Coxeter group of type $A_4$. Let $w_1, w_2, w_3, w_4 \in W(A_4)$ have reduced expressions $s_1s_2s_4s_3$, $s_2s_1s_3s_4$, $s_2s_4$, and $s_1s_2s_3s_4s_1s_2$, respectively.
    Then $w_1$ and $w_2$ are Coxeter elements because each has exactly one occurrence of each generator $s_1,s_2,s_3,s_4$ in its reduced expression.
    On the other hand, $w_3$ is not a Coxeter element because it does not have full support. Also, $w_4$ is not a Coxeter element because it has generators repeated; there are two occurrences each of $s_1$ and $s_2$ in its reduced expression.
\end{example}

\begin{example} Let $W$ be the Coxeter group of type $A_4$. Then the Coxeter elements of $W$ and their corresponding reduced expressions are shown in Figure~\ref{fig:coxeltsinA4}, where each column contains the reduced expressions for a single Coxeter element.
    There are $4! = 24$ reduced expressions for Coxeter elements in $W$, but there are only 8 Coxeter elements because some reduced expressions determine the same group element by commutation.
\begin{figure}[h!] \centering
$$\begin{array}{llllllll}
    s_1s_2s_3s_4 & s_4s_3s_2s_1 & s_1s_2s_4s_3 & s_2s_1s_3s_4 & s_3s_4s_2s_1 & s_4s_3s_1s_2 & s_1s_3s_2s_4 & s_2s_1s_4s_3 \\
         &      & s_1s_4s_2s_3 & s_2s_3s_1s_4 & s_3s_2s_4s_1 & s_4s_1s_3s_2 & s_3s_1s_2s_4 & s_2s_4s_1s_3 \\
         &      & s_4s_1s_2s_3 & s_2s_3s_4s_1 & s_3s_2s_1s_4 & s_1s_4s_3s_2 & s_1s_3s_4s_2 & s_2s_4s_3s_1 \\
         &      &      &      &      &      & s_3s_1s_4s_2 & s_4s_2s_1s_3 \\
         &      &      &      &      &      & s_3s_4s_1s_2 & s_4s_2s_3s_1
\end{array}$$
\caption{Coxeter elements and their reduced expressions in $W(A_4)$.} \label{fig:coxeltsinA4}
\end{figure}
\end{example}

\section{Fully commutative elements}\label{sec:FC}
    Let $(W,S)$ be a Coxeter system of type $\Gamma$ and let $w \in W$. Following~\cite{Stembridge1996}, we define a relation $\sim$ on the set of reduced expressions for $w$. Let $\w$ and $\w'$ be two reduced expressions for $w$. 
    We define $\w \sim \w'$ if we can obtain $\w'$ from $\w$ by applying a single commutation move of the form $s_is_j \mapsto s_js_i$, where $m(s_i,s_j) = 2$.
    Now, define the equivalence relation $\approx$ by taking the reflexive transitive closure of $\sim$. Each equivalence class under $\approx$ is called a \emph{commutation class}.
    Two reduced expressions are said to be \emph{commutation equivalent} if they are in the same commutation class.

\begin{example} \label{ex:comm_eq} Let $W$ be the Coxeter group of type $A_5$ and consider the reduced expressions $\w = s_1s_3s_2s_5s_4$ and $\w' = s_5s_1s_3s_4s_2$. Then $\w$ and $\w'$ are reduced expressions for the same element $w \in W$ and are commutation equivalent since
    $$s_1s_3\textcolor{magenta}{s_2s_5}s_4 = s_1s_3s_5\textcolor{magenta}{s_2s_4} = s_1\textcolor{magenta}{s_3s_5}s_4s_2 = \textcolor{magenta}{s_1s_5}s_3s_4s_2 = s_5s_1s_3s_4s_2,$$
    where the \textcolor{magenta}{pink} subwords denote applying a commutation to the corresponding generators to obtain the next expression.
\end{example}

\begin{example}\label{ex:nonFC} Let $W$ be the Coxeter group of type $A_4$ and let $w \in W$ have reduced expressions $\w = s_1 s_2 s_3 s_2 s_4$ and $\w' = s_1 s_3 s_2 s_3 s_4$.
    Then it is easily seen that $\w$ and $\w'$ are not commutation equivalent, so $w$ has more than one commutation class. Specifically, the commutation classes are $$\{s_1s_2s_3s_2s_4, s_1s_2s_3s_4s_2\} ~\text{and}~ \{s_1s_3s_2s_3s_4, s_3s_1s_2s_3s_4\}.$$
\end{example}

\begin{example}\label{ex:FC} Let $W$ be the Coxeter group of type $A_3$ and let $w \in W$ have reduced expression $\w = s_2s_1s_3s_2$. Then, by applying the commutation $s_1s_3 \mapsto s_3s_1$, $\w' = s_2s_3s_1s_2$ is also a reduced expression for $w$.
    There are no other reduced expressions for $w$ because we cannot apply any other commutations or braid moves. Therefore there is exactly one commutation class---namely, $\{s_2s_1s_3s_2, s_2s_3s_1s_2\}$.
\end{example}
    
    If $w$ has exactly one commutation class, then we say that $w$ is \emph{fully commutative}, or just FC.
    The set of all fully commutative elements of $W$ is denoted by $\FC(\Gamma)$, where $\Gamma$ is the corresponding Coxeter graph, or $\FC(W)$.
    For consistency, we say that a reduced expression $\w$ is FC if it is a reduced expression for some $w \in\FC(\Gamma)$.
    Note that the element in Example~\ref{ex:nonFC} is not FC since there are two commutation classes, while the element in Example~\ref{ex:FC} is FC since there is only one commutation class.

    Given some $w\in\FC(\Gamma)$ and a starting reduced expression for $w$, observe that the definition of fully commutative states that one only needs to perform commutations to obtain all the reduced expression for $w$, but the following theorem states that, when $w$ is FC, performing commutations is the only possible way to obtain another reduced expression for $w$.

\begin{theorem}[Stembridge,~\cite{Stembridge1996}] \label{thm:stem} An element $w \in W$ is FC if and only if no reduced expression for $w$ contains $\gen{s_i,s_j}_{m(s_i,s_j)}$ as a subword for all $s_i \neq s_j$ when $m(s_i,s_j) \geq 3$. \qed
\end{theorem}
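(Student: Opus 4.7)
The plan is to prove the two implications separately, using Matsumoto's Theorem (Theorem~\ref{thm:matsumoto}) in both to reduce to an analysis of which steps in a Matsumoto chain can occur. For the reverse (``$\Leftarrow$'') direction I would begin with the hypothesis that no reduced expression for $w$ contains any $\gen{s_is_j}_{m(s_i,s_j)}$ with $m(s_i,s_j) \geq 3$ as a subword, and then observe that in any Matsumoto chain between two reduced expressions $\w, \w'$ for $w$ every intermediate word is itself a reduced expression for $w$ (since commutations and braid moves preserve both the element and its length). Under the hypothesis no step can be a braid move, since that would force one of the intermediate reduced expressions to contain a forbidden subword, so the chain uses only commutations and $\w \approx \w'$. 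Since this is true for any pair of reduced expressions, $w$ has a single commutation class and is therefore FC.

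For the forward (``$\Rightarrow$'') direction I would argue by contrapositive: assume some reduced expression $\w = \alpha \cdot \gen{s_is_j}_m \cdot \beta$ for $w$ contains $\gen{s_is_j}_m$ as a subword with $m = m(s_i,s_j) \geq 3$, apply the braid move to obtain a second reduced expression $\w' = \alpha \cdot \gen{s_js_i}_m \cdot \beta$ for $w$, and show that $\w \not\approx \w'$. The main device I would introduce is a projection $\pi_{i,j}(\u) \in \{s_i,s_j\}^*$ that deletes every letter of $\u \in S^*$ outside $\{s_i,s_j\}$. To see this is a commutation invariant, note that any commutation $s_ks_l \mapsto s_ls_k$ forces $s_k$ and $s_l$ to commute; since $s_i$ and $s_j$ do not commute (as $m \geq 3$), at most one of $s_k, s_l$ lies in $\{s_i,s_j\}$, so either both letters vanish from the projection, or the single $s_i$ or $s_j$ involved slides past a deleted letter without changing the order of $s_i$'s and $s_j$'s. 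Consequently $\pi_{i,j}$ is constant on commutation classes. Finally $\pi_{i,j}(\w)$ and $\pi_{i,j}(\w')$ agree on the contributions of $\alpha$ and $\beta$ but differ on the middle block, since $\gen{s_is_j}_m$ starts with $s_i$ while $\gen{s_js_i}_m$ starts with $s_j$. Hence $\w \not\approx \w'$, so $w$ has at least two commutation classes and is not FC.

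The hard part is isolating an invariant of the commutation relation $\approx$ that is genuinely sensitive to a braid move; without such an invariant there is no \emph{a priori} reason that a braid move cannot be undone by a clever sequence of commutations. The $\{s_i,s_j\}$-projection is tailored exactly to the pair of generators appearing in the braid, and the non-commutativity of $s_i$ and $s_j$ is precisely what forces every commutation step to preserve the order of $s_i$'s and $s_j$'s. Once this invariant is in hand, both implications fall out of Matsumoto's Theorem.
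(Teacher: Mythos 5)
Your proof is correct. The paper does not actually prove this theorem---it is quoted from Stembridge with only a citation---so there is no in-paper argument to compare against; but your two-part argument is exactly the standard one: Matsumoto's Theorem forces every chain between reduced expressions to consist solely of commutations once no $\gen{s_is_j}_{m(s_i,s_j)}$ subword is available, and your projection $\pi_{i,j}$ onto $\{s_i,s_j\}^*$ is a genuine commutation-class invariant (your case analysis that a legal commutation can move at most one letter of $\{s_i,s_j\}$, and only past a deleted letter, is airtight) that visibly changes under a single braid move. It is worth noting that this projection is just the two-generator shadow of the fact, recorded in the paper's remark following Definition~\ref{def:heap}, that the labeled heap is constant on commutation classes, so your invariant is consistent with the machinery the paper develops later.
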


    This theorem states that an element is FC if and only if there is no opportunity to apply a braid move.
    Notice that Coxeter elements are FC since there will never be opportunity to apply braid moves as, by definition, there is exactly one appearance of each generator.

\begin{example} \label{ex:FC2} Let $W$ be the Coxeter group of type $A_5$. Let $w \in W$ have reduced expression $\w = s_1 s_4 s_3 s_5 s_2 s_1 s_3 s_4$. Then we have 
    $$s_1 s_4 \textcolor{magenta}{s_3 s_5} s_2 s_1 s_3 s_4 = s_1 s_4 s_5 s_3 s_2 \textcolor{magenta}{s_1 s_3} s_4 = s_1 s_4 s_5 \textcolor{blue}{s_3 s_2 s_3} s_1 s_4,$$
    where the \textcolor{magenta}{pink} subword denotes applying a commutation to the corresponding generators to obtain the next expression.
    So, $w$ is not FC because there is opportunity to apply a braid move, highlighted in \textcolor{blue}{blue}.
\end{example}

    Stembridge classified the irreducible Coxeter groups that contain only finitely many fully commutative elements, called the \emph{FC-finite Coxeter groups}.
    This thesis is mainly concerned with $W(A_n)$, which is a finite group, so it has finitely many FC elements. However, there exist infinite Coxeter groups that contain only finitely many FC elements. 
    For example, Coxeter groups of type $E_n$ with $n \geq 9$ as shown in Figure~\ref{fig:E} are infinite, but they have only finitely many FC elements.

\begin{theorem}[Stembridge,~\cite{Stembridge1996}] \label{thm:FCfinite} The FC-finite irreducible Coxeter groups are of type $A_n$ with $n \geq 1$, $B_n$ with $n \geq 2$, $D_n$ with $n \geq 4$, $E_n$ with $n \geq 6$, $F_n$ with $n \geq 4$, $H_n$ with $n \geq 3$, and $I_2(m)$ with $5 \leq m < \infty$. The corresponding Coxeter graphs are shown in Figure~\ref{fig:coxgraphs}. \qed
\end{theorem}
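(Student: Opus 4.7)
The plan is to split the proof into two directions: each type on the list has only finitely many FC elements, and every irreducible type off the list has infinitely many. The ``only if'' half is largely routine, since for the finite Coxeter groups on the list---$A_n$, $B_n$, $D_n$, $E_6, E_7, E_8$, $F_4$, $H_3, H_4$, and $I_2(m)$ with $m < \infty$---FC-finiteness follows from plain finiteness of $W$ itself. The real content lives in two places: the listed types whose groups are nevertheless infinite (namely $E_n$ for $n \geq 9$, $F_n$ for $n \geq 5$, and $H_n$ for $n \geq 5$), and the converse direction for arbitrary non-listed irreducible types.

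For the converse direction, I would use a hereditary principle: if $\Gamma'$ is an induced subgraph of $\Gamma$ and $W(\Gamma')$ is FC-infinite, then so is $W(\Gamma)$, because any reduced expression over $S(\Gamma')$ that avoids braid subwords still avoids them when regarded as an expression over $S(\Gamma)$, and hence remains FC by Theorem~\ref{thm:stem}. It then suffices to exhibit a short list of minimal forbidden subgraphs---those irreducible graphs not appearing in Theorem~\ref{thm:FCfinite} that contain no proper induced subgraph off the list---and to show each is FC-infinite. Two easy sources handle a large portion of the case analysis: any Coxeter graph with an edge labeled $\infty$ yields an infinite dihedral parabolic subgroup, in which every element is automatically FC since there are no braid moves to perform, and for each affine type $\tilde{A}_n, \tilde{B}_n, \tilde{C}_n, \tilde{D}_n$, one can exhibit an explicit infinite family of FC elements built from Coxeter-like products along the affine diagram.

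The main obstacle is the forward direction for the three infinite listed types. Here I would work with the heap representation of Section~\ref{sec:heaps}: by Theorem~\ref{thm:stem}, an FC heap is exactly one whose labels contain no convex ``braid chain'' $\gen{s_is_j}_{m(s_i,s_j)}$. For each of $E_n, F_n, H_n$ in the relevant infinite range, I would bound the total size of such heaps by an explicit function of $n$, arguing by induction on the distance from the branching vertex (for $E_n$, $F_n$) or from the high-bond vertex (for $H_n$) that the number of occurrences of each generator in any FC element is bounded, hence so is $\ell(w)$. The delicate case is $H_n$ for $n \geq 5$, where the interaction between the bond of strength $5$ and the linear structure requires careful tracking of how commutations can shuffle generators past one another without ever producing a length-$5$ braid; this is essentially the combinatorial core of Stembridge's original argument in~\cite{Stembridge1996}.
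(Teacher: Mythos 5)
The paper does not actually prove this theorem: it is imported from Stembridge's paper~\cite{Stembridge1996} and stamped with a \emph{qed}, so there is no internal argument to compare yours against. Your overall architecture does match Stembridge's published proof---heredity of FC-finiteness under passing to standard parabolic subgroups (induced subgraphs), reduction to a list of minimal FC-infinite graphs for one direction, and explicit control of FC heaps in the infinite listed types $E_n$ ($n\geq 9$), $F_n$ ($n\geq 5$), $H_n$ ($n\geq 5$) for the other. The hereditary principle is stated correctly: a reduced expression over $S(\Gamma')$ stays reduced in $W(\Gamma)$ and, by Theorem~\ref{thm:stem}, stays FC. But both halves of your outline have real gaps. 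For the converse, the set of minimal forbidden subgraphs is considerably larger than ``an $\infty$-edge plus $\tilde{A}_n,\tilde{B}_n,\tilde{C}_n,\tilde{D}_n$.'' Because $E_n$ is on the list for \emph{every} $n\geq 6$ and $H_n$ for every $n\geq 3$, you must separately exhibit infinite families of FC elements in, at least: the trees $\tilde{E}_6$ and $\tilde{E}_7$ (arm lengths $(2,2,2)$ and $(1,3,3)$ at the branch vertex); a $3$-vertex path with an end label $\geq 6$; a path with an interior label $5$; paths with a label $4$ at distance $\geq 2$ from both ends and, more generally, with two labels $\geq 4$ in arbitrary positions; and graphs combining a branch vertex with a label $\geq 4$. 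Each requires its own construction, and none of them is affine of type $\tilde{B}$, $\tilde{C}$, or $\tilde{D}$, so the ``two easy sources'' do not cover them.

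The more serious problem is the forward direction. The inequality that Theorem~\ref{thm:stem} gives you runs the wrong way for the induction you describe: between two consecutive occurrences of a generator $s$ all of whose bonds have strength $3$, an FC heap must contain at least two occurrences of neighbors of $s$ (one would create an $sts$ braid subword, zero would violate reducedness), so for an end vertex $s$ of the long $A_k$ tail with unique neighbor $t$ one gets $\#t\geq 2(\#s-1)$. Occurrence counts therefore \emph{grow} as you move inward from an end of the tail; inducting ``on the distance from the branching vertex'' outward produces lower bounds on interior multiplicities, not the upper bound on $\ell(w)$ you need. To close the argument you need an anchor---a proof that the generators at the special feature (the degree-$3$ vertex of $E_n$, the interior $4$-bond of $F_n$, which has no branching vertex, or the terminal $5$-bond of $H_n$) occur a bounded number of times in every FC element---together with a mechanism that propagates a bound \emph{down} the tail. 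That anchor-plus-propagation step is precisely the nonroutine combinatorial core of Stembridge's proof, and your proposal defers it rather than supplying it.
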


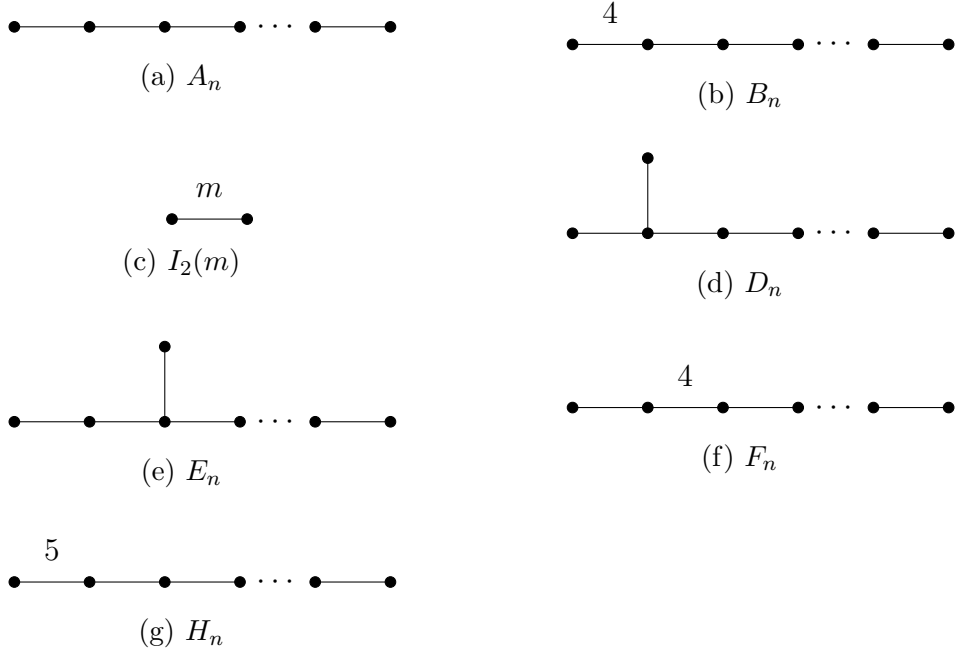
\begin{figure}[h!]
\begin{tabular}{m{7cm} m{7cm}}
\begin{subfigure}{0.5\textwidth} \centering
\begin{tikzpicture}[scale=1.0]
\draw[fill=black] \foreach \x in {1,2,...,6} {(\x,10) circle (2pt)};
\draw {(.5,10) node{}
(4.5,10) node{$\cdots$}
[-] (1,10) -- (4,10)
[-] (5,10) -- (6,10)
(1,10) node{}}; 
\end{tikzpicture}
\caption{$A_{n}$} \label{fig:A}
\end{subfigure} &

\begin{subfigure}{0.5\textwidth} \centering
\begin{tikzpicture}[scale=1.0]
\draw [fill=black] \foreach \x in {1,2,...,6} {(\x,8.5) circle (2pt)};
\draw {(.5,8.5) node{}
(1.5,8.5) node[label=above:$4$]{}
(4.5,8.5) node{$\cdots$}
[-] (1,8.5) -- (4,8.5)
[-] (5,8.5) -- (6,8.5)
(2,8.5) node{}}; 
\end{tikzpicture}
\caption{$B_{n}$} \label{fig:B}
\end{subfigure} \\

    & \\ 

\begin{subfigure}{0.5\textwidth} \centering
\begin{tikzpicture}[scale=1.0]
\draw[fill=black] \foreach \x in {1,2} {(\x,0) circle (2pt)};
\draw {(.25,0) node{}
(1.5,0) node[label=above:$m$]{}
[-] (1,0) -- (2,0)
(2,0) node{}};
\end{tikzpicture}
\caption{$I_{2}(m)$} \label{fig:I}
\end{subfigure} &

\begin{subfigure}{0.5\textwidth} \centering
\begin{tikzpicture}[scale=1.0]
\draw[fill=black] \foreach \x in {1,2,...,6} {(\x,6.5) circle (2pt)};
\draw[fill=black] (2,7.5) circle (2pt);
\draw {(.5,6.5) node{}
(4.5,6.5) node{$\cdots$}
[-] (1,6.5) -- (4,6.5)
[-] (5,6.5) -- (6,6.5)
[-] (2,6.5) -- (2,7.5)
(2,6.5) node{}};
\end{tikzpicture}
\caption{$D_{n}$} \label{fig:D}
\end{subfigure} \\

    & \\ 
    
\begin{subfigure}{0.5\textwidth} \centering
\begin{tikzpicture}[scale=1.0]
\draw[fill=black] \foreach \x in {1,2,...,6} {(\x,4.5) circle (2pt)};
\draw[fill=black] (3,5.5) circle (2pt);
\draw {(.5,4.5) node{}
(4.5,4.5) node{$\cdots$}
[-] (1,4.5) -- (4,4.5)
[-] (5,4.5) -- (6,4.5)
[-] (3,4.5) -- (3,5.5)
(3,4.5) node{}};
\end{tikzpicture}
\caption{$E_{n}$} \label{fig:E}
\end{subfigure} &

\begin{subfigure}{0.5\textwidth} \centering
\begin{tikzpicture}[scale=1.0]
\draw[fill=black] \foreach \x in {1,2,...,6} {(\x,3) circle (2pt)};
\draw {(.5,3) node{}
(2.5,3) node[label=above:$4$]{}
(4.5,3) node{$\cdots$}
[-] (1,3) -- (4,3)
[-] (5,3) -- (6,3)
(3,3) node{}};
\end{tikzpicture}
\caption{$F_{n}$} \label{fig:F}
\end{subfigure} \\

    & \\ 

\begin{subfigure}{0.5\textwidth} \centering
\begin{tikzpicture}[scale=1.0]
\draw[fill=black] \foreach \x in {1,2,...,6} {(\x,1.5) circle (2pt)};
\draw {(.5,1.5) node{}
(1.5,1.5) node[label=above:$5$]{}
(4.5,1.5) node{$\cdots$}
[-] (1,1.5) -- (4,1.5)
[-] (5,1.5) -- (6,1.5)
(2,1.5) node{}}; 
\end{tikzpicture}
\caption{$H_{n}$} \label{fig:H}
\end{subfigure}
\end{tabular}
\caption{Coxeter graphs corresponding to the irreducible FC-finite Coxeter groups.}
\label{fig:coxgraphs}
\end{figure}

    It is well known that the number of FC elements in $W(A_{n-1})$ is given by the Catalan number $C_n = \frac{1}{n+1} \binom{2n}{n}$.

\section{Heaps}\label{sec:heaps}
    We can now discuss another representation of elements of Coxeter groups. Each reduced expression is associated with a labeled partially ordered set called a heap. 
    We follow the development in~\cite{Ernst2010} and~\cite{Stembridge1996}.

\begin{definition} \label{def:heap} Let $(W,S)$ be a Coxeter system. Suppose $\w = s_{x_1} s_{x_2} \cdots s_{x_k}$ is a reduced expression for $w \in W$, and as in~\cite{Stembridge1996}, define a partial ordering $\prec$ on the indices $\{1,\ldots,k\}$ by the transitive closure of the relation $j \prec i$ if $i < j$ and $s_{x_i}$ and $s_{x_j}$ do not commute.
    In particular, $j \prec i$ if $i < j$ and $s_{x_i} = s_{x_j}$ by transitivity and the fact that $\w$ is reduced.
    This partial order with $i$ labeled $s_{x_i}$ is called the \emph{heap} of $\w$.
\end{definition}

    Note that for simplicity, we are omitting the labels of the underlying poset but retaining the labels of the corresponding generators.

\begin{example}\label{ex:firstheap} Let $\w = s_2 s_1 s_3 s_2 s_4 s_5$ be a reduced expression for $w \in W(A_5)$.  We see that $\w$ is indexed by $\{1, 2, 3, 4, 5, 6\}$ because $\ell(w)=6$. We see that $4 \prec 3$ since $3 < 4$ and $s_4$ and $s_3$ do not commute.
    The labeled Hasse diagram for the heap poset of $\w$ is shown in Figure~\ref{fig:hasse}.
\begin{center} 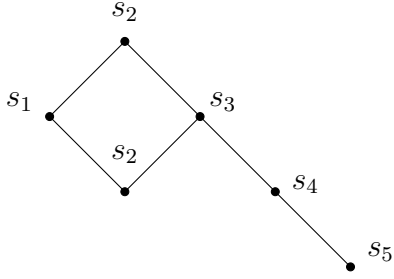
\begin{figure}[h!] \centering
\begin{tikzpicture}
\draw (0,2)--(-1,1); \draw (0,2)--(1,1); \draw (-1,1)--(0,0); \draw (0,0)--(1,1); \draw (1,1)--(2,0); \draw (2,0)--(3,-1);
\draw [fill=black] (0,2) circle (1.5pt); \draw[color=black] (0,2.4) node {$s_2$};
\draw [fill=black] (-1,1) circle (1.5pt); \draw[color=black] (-1.4,1.2) node {$s_1$};
\draw [fill=black] (1,1) circle (1.5pt); \draw[color=black] (1.3,1.2) node {$s_3$};
\draw [fill=black] (0,0) circle (1.5pt); \draw[color=black] (0,0.5) node {$s_2$};
\draw [fill=black] (2,0) circle (1.5pt); \draw[color=black] (2.4,0.1) node {$s_4$};
\draw [fill=black] (3,-1) circle (1.5pt); \draw[color=black] (3.4,-0.8) node {$s_5$};
\end{tikzpicture}
\caption{The labeled Hasse diagram for the heap poset of $w = s_2 s_1 s_3 s_2 s_4 s_5$.} \label{fig:hasse}
\end{figure} \end{center}
\end{example}

    Let $\w$ be a fixed reduced expression for $w \in W(A_n)$. As in~\cite{Billey2007} and~\cite{Ernst2010}, we represent a heap for $\w$ as a set of lattice points embedded in $\{1,\ldots,n\} \times \N$.
    To do so, we assign (not necessarily unique) coordinates $(x,y) \in \{1,\ldots,n\} \times \N$ to each entry of the labeled Hasse diagram for the heap of $\w$ in such a way that
\begin{enumerate}[leftmargin=0.75in, label=(\alph*)]
    \item An entry with coordinates $(x,y)$ is labeled $s_i$ (or $i$) in the heap if and only if $x = i$; 
    \item An entry with coordinates $(x,y)$ is greater than an entry with coordinates $(x',y')$ in the heap if and only if $y > y'$.
\end{enumerate}
    It follows from the definition that there is an edge in the Hasse diagram from $(x,y)$ to $(x',y')$ if and only if $x = x' \pm 1$, $y > y'$, and there are no entries $(x'', y'')$ such that $x'' \in \{x, x'\}$ and $y'< y'' < y$.
    This implies that we can completely reconstruct the edges of the Hasse diagram and the corresponding heap poset from a lattice point representation.
    The lattice point representation of a heap allows us to visualize potentially cumbersome arguments. Note that our heaps are upside-down versions of the heaps that appear in~\cite{Billey2007} and several other papers. That is, in this thesis entries on top of a heap correspond to generators occurring to the left, as opposed to the right, in the corresponding reduced expression.
    One can form similar lattice point representations for heaps when $\Gamma$ is a straight line Coxeter graph.
    
    Let $\w = s_{x_1} \cdots s_{x_n}$ be any reduced expression for $w \in W(A_{n})$. We let $H(\w)$ denote a lattice representation of the heap poset in $\{1,\ldots,n\} \times \N$ described in the paragraph above.
    There are many possible coordinate assignments for the entries of $H(\w)$, yet the $x$-coordinates for each entry will be fixed. If $s_{x_i}$ and $s_{x_j}$ are adjacent generators in the Coxeter graph with $i<j$, then we must place the point labeled by $s_{x_i}$ at a level that is \emph{above} the level of the point labeled by $s_{x_j}$.
    In particular, two entries labeled by the same generator may only differ by the amount of vertical space between them while maintaining their relative vertical position to adjacent entries in the heap.
    
    Because generators that are not adjacent in the Coxeter graph commute, points whose $x$-coordinates differ by more than one can slide past each other or land at the same level.
    To visualize the labeled heap poset of a lattice representation we will enclose each entry of the heap in a block in such a way that if one entry covers another, the blocks overlap halfway.

\begin{remark} It follows from Proposition 2.2 in~\cite{Stembridge1996} that heaps are well-defined up to commutation class.
    That is, if $\w$ and $\w'$ are two reduced expressions for $w \in W$ that are in the same commutation class, then the labeled heaps of $\w$ and $\w'$ are equal.
    In particular, if $w$ is FC, then it has a single commutativity class, and so there is a unique heap associated to $w$.
    In this case, if $w$ is FC, then we may write $H(w)$ to denote the heap of any reduced expression for $w$.
\end{remark}

    There are potentially many different ways to represent a heap, each differing by the vertical placement of blocks. For example, we can place blocks in vertical positions that are as high as possible, as low as possible, or some combination of high/low. In this thesis, we choose what we view to be the best representation of the heap for each example.

\begin{example}\label{ex:heap} Let $W$ be the Coxeter group of type $A_5$ and $\w = s_1s_2s_3s_1s_2s_4s_5$ be a reduced expression for $w \in \FC(W)$.
    We will construct one possible lattice point representation for $H(w)$.
    Starting from the right hand side, the first generator is $s_5$, so we place a block, labeled with a 5, in position $(5,1)$. Observe that the $x$-coordinate is forced to be 5 since the block corresponds to the generator $s_5$, but we have a choice for the $y$-coordinate. We choose to place it as low as possible to get
\begin{center} \begin{tabular}{m{2.75cm} m{0.5cm}} \begin{tikzpicture}
    \node at (0.5,-1.5) {$s_1$}; \node at (1,-1.5) {$s_2$}; \node at (1.5,-1.5) {$s_3$}; \node at (2,-1.5) {$s_4$}; \node at (2.5,-1.5) {$s_5$};
    \draw[dotted, line width=0.5pt] (0.5,-1.2) -- (0.5,2);
    \draw[dotted, line width=0.5pt] (1,-1.2)   -- (1,2);
    \draw[dotted, line width=0.5pt] (1.5,-1.2) -- (1.5,2);
    \draw[dotted, line width=0.5pt] (2,-1.2)   -- (2,2);
    \draw[dotted, line width=0.5pt] (2.5,-1.2) -- (2.5,2);
    \sq{2}{0}; \node at (2.5,-0.5) {$5$};
\end{tikzpicture} & . 
\end{tabular} \end{center}

    \noindent Now, moving right to left, the next generator is $s_4$, so, similarly, we place a block, labeled with a 4, in position $(4,2)$. The $x$-coordinate must be 4, and we must place it at $y \geq 2$ because $s_4$ and $s_5$ do not commute, and so the $s_4$ block will be on a level above $s_5$, overlapping it halfway. We choose to place it as low as possible to get
\begin{center} \begin{tabular}{m{2.75cm} m{0.5cm}} \begin{tikzpicture}
    \node at (0.5,-1.5) {$s_1$}; \node at (1,-1.5) {$s_2$}; \node at (1.5,-1.5) {$s_3$}; \node at (2,-1.5) {$s_4$}; \node at (2.5,-1.5) {$s_5$};
    \draw[dotted, line width=0.5pt] (0.5,-1.2) -- (0.5,2);
    \draw[dotted, line width=0.5pt] (1,-1.2)   -- (1,2);
    \draw[dotted, line width=0.5pt] (1.5,-1.2) -- (1.5,2);
    \draw[dotted, line width=0.5pt] (2,-1.2)   -- (2,2);
    \draw[dotted, line width=0.5pt] (2.5,-1.2) -- (2.5,2);
    \sq{2}{0};   \node at (2.5,-0.5) {$5$};
    \sq{1.5}{1}; \node at (2,0.5)    {$4$};
\end{tikzpicture} & . 
\end{tabular} \end{center}

    \noindent The next two generators, moving to the left, are $s_2$ and $s_1$.  Since $s_2$ and $s_4$ commute, we place the corresponding blocks on the same level as each other (or with the same $y$-coordinate). We could have placed the $s_2$ block lower in the heap since there is nothing blocking it, but we choose to place it on the same level as the $s_4$ block because they commute.
    Since the $s_4$ block has 2 as its $y$-coordinate, we place the $s_2$ block in position $(2,2)$.
    Since $s_2$ and $s_1$ do not commute, we must place the $s_1$ block above the $s_2$ block. We choose to place the $s_1$ block in position $(1,3)$.
    We get
\begin{center} \begin{tabular}{m{2.9cm} m{0.5cm}} 
\begin{tikzpicture}
    \node at (0.5,-1.5) {$s_1$}; \node at (1,-1.5) {$s_2$}; \node at (1.5,-1.5) {$s_3$}; \node at (2,-1.5) {$s_4$}; \node at (2.5,-1.5) {$s_5$};
    \draw[dotted, line width=0.5pt] (0.5,-1.2) -- (0.5,2.5);
    \draw[dotted, line width=0.5pt] (1,-1.2)   -- (1,2.5);
    \draw[dotted, line width=0.5pt] (1.5,-1.2) -- (1.5,2.5);
    \draw[dotted, line width=0.5pt] (2,-1.2)   -- (2,2.5);
    \draw[dotted, line width=0.5pt] (2.5,-1.2) -- (2.5,2.5);
    \sq{2}{0};   \node at (2.5,-0.5) {$5$};
    \sq{1.5}{1}; \node at (2,0.5)    {$4$};
    \sq{0.5}{1};   \node at(1,0.5)   {$2$};
    \sq{0}{2};   \node at(0.5,1.5)   {$1$};
\end{tikzpicture} & .
\end{tabular} \end{center}

    \noindent Continuing to place blocks in the same manner, a heap representation that corresponds to $\w$ is
\begin{center} \begin{tabular}{m{2.9cm} m{0.5cm}}  \begin{tikzpicture}
    \node at (0.5,-1.5) {$s_1$}; \node at (1,-1.5) {$s_2$}; \node at (1.5,-1.5) {$s_3$}; \node at (2,-1.5) {$s_4$}; \node at (2.5,-1.5) {$s_5$};
    \draw[dotted, line width=0.5pt] (0.5,-1.2) -- (0.5,4.2);
    \draw[dotted, line width=0.5pt] (1,-1.2)   -- (1,4.2);
    \draw[dotted, line width=0.5pt] (1.5,-1.2) -- (1.5,4.2);
    \draw[dotted, line width=0.5pt] (2,-1.2)   -- (2,4.2);
    \draw[dotted, line width=0.5pt] (2.5,-1.2) -- (2.5,4.2);
    \sq{2}{0};   \node at (2.5,-0.5) {$5$};
    \sq{1.5}{1}; \node at (2,0.5)    {$4$};
    \sq{0.5}{1}; \node at (1,0.5)    {$2$};
    \sq{0}{2};   \node at (0.5,1.5)  {$1$};
    \sq{1}{2};   \node at (1.5,1.5)  {$3$};
    \sq{0.5}{3}; \node at (1,2.5)    {$2$};
    \sq{0}{4};   \node at (0.5,3.5)  {$1$};
\end{tikzpicture} & .
\end{tabular} \end{center}
\end{example}
    
    Conversely, given a heap, we can write an expression for the group element.
    By starting on the top and moving left to right and down, we write the corresponding generators. We get an expression that is commutation equivalent to any expression to which the heap corresponds.
    
\begin{example} Given the heap in Figure~\ref{fig:heapToRedExp}, we obtain the reduced expression $s_2 s_3 s_5 s_4$, which is commutation equivalent to $s_2s_5s_3s_4$ and $s_5s_2s_3s_4$, all of which yield the same heap. In fact, all reduced expressions yield the same heap since this particular element is FC.

\begin{center} \begin{figure}[H] \centering
\begin{tikzpicture}
    \sq{2}{2};   \node at (2.5,1.5)  {$5$};
    \sq{1.5}{1}; \node at (2,0.5)    {$4$};
    \sq{0.5}{3}; \node at (1,2.5)    {$2$};
    \sq{1}{2};   \node at (1.5,1.5)  {$3$};
\end{tikzpicture}
\caption{The heap for an FC element.}\label{fig:heapToRedExp}
\end{figure}
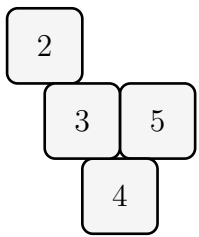 \end{center}
\end{example}
   
\begin{example} We return to Example~\ref{ex:heap}. Note that $\w = s_1s_2s_3s_1s_2s_4s_5$ is not fully commutative because there is opportunity to apply a braid relation.
    We have
    \begin{equation} \label{eq:nonuniqueheapex} 
        s_1s_2\textcolor{magenta}{s_3s_1}s_2s_4s_5 = \textcolor{blue}{s_1s_2s_1}
        s_3s_2s_4s_5 = s_2s_1s_2s_3s_2s_4s_5,
    \end{equation}
where the \textcolor{magenta}{pink} subword denotes applying a commutation to obtain the next expression and the \textcolor{blue}{blue} subword denotes applying a braid relation to obtain the next expression.
    Since $w$ is not FC, we can represent $w$ with a different heap using the last reduced expression $s_2s_1s_2s_3s_2s_4s_5$ in (\ref{eq:nonuniqueheapex}).
    We get the heap shown in Figure~\ref{fig:nonuniqueheapex} as another representation of $w$. Note that we can see the braid relation $s_2s_1s_2 = s_1s_2s_1$ in the heap, highlighted in \textcolor{blue}{blue}, that we applied in (\ref{eq:nonuniqueheapex}).
\begin{center} \begin{figure}[H] \centering
\begin{tikzpicture}
    \sq{2}{0};     \node at (2.5,-0.5) {$5$};
    \sq{1.5}{1};   \node at (2,0.5)    {$4$};
    \sq{0.5}{1};   \node at (1,0.5)    {$2$};
    \sqbl{0}{4};   \node at (0.5,3.5)  {$1$};
    \sq{1}{2};     \node at (1.5,1.5)  {$3$};
    \sqbl{0.5}{3}; \node at (1,2.5)    {$2$};
    \sqbl{0.5}{5}; \node at (1,4.5)    {$2$};
\end{tikzpicture}
\caption{A second heap for the element in Example~\ref{ex:heap}.} \label{fig:nonuniqueheapex}
\end{figure}
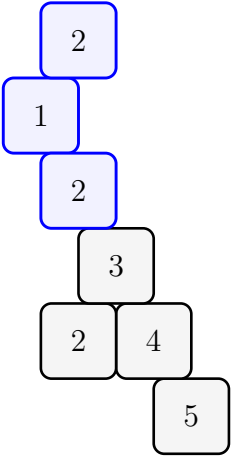 \end{center}
\end{example}

\begin{definition}\label{def:subheap} Let $\w = s_{x_{1}} \cdots s_{x_{n}}$ be a reduced expression for $w \in W$.  We define a heap $H'$ to be a \emph{subheap} of $H(\w)$ if $H' = H(\w')$, where $\w' = s_{y_1}s_{y_2} \cdots s_{y_k}$ is a subexpression of $\w$. We emphasize that the subexpression need not be a subword.
\end{definition} 

    We say that a subposet $Q$ of a poset $P$ is \emph{convex} if $y \in Q$ whenever $x < y < z$ in $P$ and $x, z \in Q$. We will refer to a subheap as a \emph{convex subheap} if the underlying subposet is convex.  

\begin{example} Let $w \in W(A_6)$ have reduced expression $\w = s_2 s_3 \textcolor{blue}{s_5 s_4} s_6 \textcolor{blue}{s_5}$. Since there is no opportunity to apply a braid relation in any reduced expression for $w$, $w$ is FC, and so there is a unique heap.
    The heap $H(w)$ is shown in Figure~\ref{fig:FCheapex}. Notice that we chose to place all the blocks in this heap as low as possible.

\begin{center} 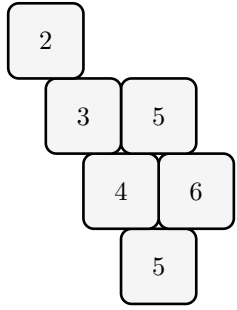
\begin{figure}[H] \centering
\begin{tikzpicture}
    \sq{0.5}{0}; \node at (1,-0.5)  {\footnotesize $5$};
    \sq{1}{1};   \node at (1.5,0.5) {\footnotesize $6$};
    \sq{0}{1};   \node at (0.5,0.5) {\footnotesize $4$};
    \sq{0.5}{2}; \node at (1,1.5)   {\footnotesize $5$};
    \sq{-0.5}{2};\node at (0,1.5)   {\footnotesize $3$};
    \sq{-1}{3};  \node at (-0.5,2.5){\footnotesize $2$};
\end{tikzpicture}
\caption{The heap of an FC element of $W(A_6)$.}\label{fig:FCheapex}
\end{figure} \end{center}
   
    Now, let $\w' = s_5 s_4 s_5$ be the subexpression of $\w$ that results from picking the third, fourth, and last generators of $\w$, highlighted in \textcolor{blue}{blue}. Then $H(\w')$ is shown in Figure~\ref{fig:nonconvexsubheapex} and is a subheap of $H(w)$, but $H(\w')$ is not convex since there is a block in $H(w)$ corresponding to the generator $s_6$ that occurs between the two occurrences of $s_5$ but does not have a block representing it in $H(\w')$.
    However, if we include the generator $s_6$, we get $\w'' = s_5 s_4 s_6 s_5$, and $H(\w'')$ is a convex subheap of $H(w)$, as shown in Figure~\ref{fig:convexsubheapex}.

\begin{center} \begin{figure}[H] \centering
\begin{subfigure}{0.4\textwidth} \centering
\begin{tikzpicture}
    \sq{0.5}{2}; \node at (1,1.5)   {\footnotesize $5$};
    \sq{0}{1};   \node at (0.5,0.5) {\footnotesize $4$};
    \sq{0.5}{0}; \node at (1,-0.5)  {\footnotesize $5$};
\end{tikzpicture}
\caption{}\label{fig:nonconvexsubheapex}
\end{subfigure}
\begin{subfigure}{0.4\textwidth} \centering
\begin{tikzpicture}
    \sq{0.5}{2}; \node at (1,1.5)   {\footnotesize $5$};
    \sq{0}{1};   \node at (0.5,0.5) {\footnotesize $4$};
    \sq{0.5}{0}; \node at (1,-0.5)  {\footnotesize $5$};
    \sq{1}{1};   \node at (1.5,0.5) {\footnotesize $6$};
\end{tikzpicture}
\caption{}\label{fig:convexsubheapex}
\end{subfigure}
\caption{Non-convex and convex subheaps of the heap in Figure~\ref{fig:FCheapex}.}
\end{figure}
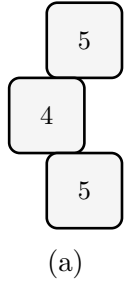
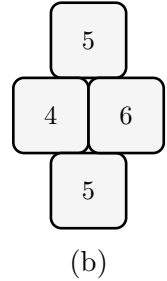
\end{center}
    
    Notice that if we remove the block labeled by 6 from the original heap, then the heap in Figure~\ref{fig:nonconvexsubheapex} is a convex subheap of $H(w)$.
\end{example}

    From this point on, if there will be no confusion, we will not specify the exact subexpression from which a subheap arises.
    The following proposition follows from the proof of Proposition 3.3 in~\cite{Stembridge1996}.

\begin{proposition}\label{prop:convexsubheapiff} Let $w \in \FC(W)$.  Then $H'$ is a convex subheap of $H(w)$ if and only if $H'$ is the heap for some subword of some reduced expression for $w$. \qed
\end{proposition}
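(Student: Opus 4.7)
The plan is to prove both directions using the bijection, valid because $w\in\FC(W)$, between reduced expressions of $w$ and linear extensions of the heap poset $H(w)$; throughout I identify elements of heaps with the indices of their generating reduced expressions.

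For the easy direction ($\Leftarrow$), suppose $\w'=s_{x_i}\cdots s_{x_{i+m-1}}$ is a subword of a reduced expression $\w=s_{x_1}\cdots s_{x_k}$ for $w$, and let $H'=H(\w')$, which has underlying index set $I=\{i,i+1,\ldots,i+m-1\}$. Unwinding Definition~\ref{def:heap}, the heap order translates to positions via ``higher in heap means smaller index,'' so any chain in $H(w)$ witnessing $x<y<z$ forces $\mathrm{pos}(z)<\mathrm{pos}(y)<\mathrm{pos}(x)$. If $x,z\in I$ then $\mathrm{pos}(y)\in I$ as well, so $y\in H'$, proving convexity.

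For the harder direction ($\Rightarrow$), suppose $H'$ is a convex subheap of $H(w)$ with underlying index set $I$. The aim is to produce a reduced expression of $w$ in which the generators indexed by $I$ occupy consecutive positions; the corresponding subword will then have heap $H'$, because for convex $I$ the transitive closure of the ``do not commute'' relations restricted to $I$ coincides with the restriction of the heap order of $H(w)$ to $I$. I would partition $H(w)\setminus I=D\sqcup U\sqcup N$, where $D$ is the set of elements strictly below some element of $I$, $U$ is the set of elements strictly above some element of $I$, and $N$ is the set of elements incomparable to every element of $I$. Convexity then supplies three key facts: (i) $D\cap U=\emptyset$, since any common element would be sandwiched between two members of $I$; (ii) no element of $D$ lies above any element of $I$ and no element of $U$ lies below any element of $I$, by the same sandwiching; and (iii) no element of $U\cup N$ lies below any element of $D$, since if $u<d<x$ with $x\in I$, then $u\in U$ gives a sandwich $y<u<x$ for some $y\in I$ forcing $u\in I$, while $u\in N$ gives $u<x$ contradicting incomparability. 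Concatenating a linear extension of $D$, then one of $I$, then one of $U\cup N$ yields a linear extension of $H(w)$ in which $I$ appears consecutively, and this corresponds to the desired reduced expression.

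The main obstacle will be verifying the three compatibility conditions (i)--(iii) cleanly, since each is a short convexity-or-incomparability argument that is easy to garble if signs or directions are mixed up. A secondary subtlety is identifying the heap of the constructed consecutive subword with the given $H'$ as a labeled poset, rather than merely as a labeled multiset of generators, and it is precisely the convexity of $I$ that rules out the possibility that the subword heap carries strictly fewer relations than the induced subposet of $H(w)$ on $I$.
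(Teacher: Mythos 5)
The paper does not actually supply a proof of this proposition: it is stated with a terminal \qed and the remark that it ``follows from the proof of Proposition 3.3 in \cite{Stembridge1996},'' so there is no in-house argument to compare against. Your proof is correct and is essentially the standard argument one would extract from Stembridge: the $(\Leftarrow)$ direction is exactly the observation that the heap order refines the reverse of the position order, so an interval of positions is convex; the $(\Rightarrow)$ direction is the usual down-set/up-set decomposition producing a linear extension in which the convex set $I$ is consecutive, combined with the fact (valid since $w$ is FC, so there is a single commutation class) that every linear extension of $H(w)$ yields a reduced expression for $w$. Your three compatibility conditions (i)--(iii) are the right ones and your convexity arguments for them are sound; note that (i) is also what guarantees that $D$, $U$, $N$ genuinely partition $H(w)\setminus I$, since without convexity an element could lie above one member of $I$ and below another. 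You are also right to flag the final identification of the subword heap with $H'$: the clean way to close that is to observe that for a non-commuting pair in $I$ the relative order in any linear extension is forced by $H(w)$, so the generating relations of the intrinsic heap of the new consecutive subword agree with those of the original subexpression, and convexity ensures the transitive closure within $I$ adds nothing that the induced subposet of $H(w)$ on $I$ does not already contain.
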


    The following proposition follows from Lemma 2.4.5 in~\cite{Ernst2010}. It will help us identify when a heap corresponds to a fully commutative element in $W(A_n)$.

\begin{proposition} \label{prop:convexsubheap} Let $w \in \FC(A_n)$. Then $H(w)$ cannot contain either of the convex subheaps shown in Figure~\ref{fig:convexnotinFCheaps}, where $1 \leq i \leq n-1$ and \begin{tabular}{m{0.5cm}} \begin{tikzpicture}[scale=0.6] \bsq{0}{0}; \end{tikzpicture} \end{tabular} is used to emphasize the absence of a block in the corresponding position in $H(w)$. \qed
\end{proposition}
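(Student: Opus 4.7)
The plan is to argue by contraposition, leveraging Proposition~\ref{prop:convexsubheapiff} to translate ``contains a convex subheap'' into ``some reduced expression contains a specific subword,'' and then to apply Stembridge's criterion (Theorem~\ref{thm:stem}) to produce a contradiction.

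More precisely, the two forbidden configurations in Figure~\ref{fig:convexnotinFCheaps} should consist of three blocks arranged in columns $i, i+1, i$ (respectively $i+1, i, i+1$), stacked vertically, together with the emphasized absence of any block in the middle column between the top and bottom entries. Under the lattice-point conventions of Section~\ref{sec:heaps}, this absence is exactly what forces the three blocks to form a convex subposet: the two outer blocks cover each other through the middle block (since $s_i$ and $s_{i+1}$ do not commute in $A_n$), and nothing else can slip in between them in the heap. The reduced expression read from such a configuration, top to bottom, is $s_is_{i+1}s_i$ (respectively $s_{i+1}s_is_{i+1}$).

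Now suppose, for contradiction, that $w \in \FC(A_n)$ and $H(w)$ contains one of the two configurations as a convex subheap $H'$. By Proposition~\ref{prop:convexsubheapiff}, $H' = H(\w')$ for some subword $\w'$ of some reduced expression $\w$ for $w$; the argument of the previous paragraph identifies $\w'$ as $s_is_{i+1}s_i$ or $s_{i+1}s_is_{i+1}$. Since $m(s_i, s_{i+1}) = 3$ in type $A_n$, this subword is precisely $\langle s_i, s_{i+1}\rangle_{3}$, and its presence inside a reduced expression for $w$ contradicts Theorem~\ref{thm:stem}. This forces $H(w)$ to avoid both configurations, as required.

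The main subtlety, and the step I would be most careful about, is verifying that the configurations as drawn actually correspond to \emph{convex} subheaps rather than merely subheaps, so that Proposition~\ref{prop:convexsubheapiff} (which requires a subword, not a general subexpression) applies. This is where the empty dotted block in the figure does real work: it records that no block of $H(w)$ labeled $s_{i+1}$ (respectively $s_i$) lies at a vertical level strictly between the two outer blocks. Combined with the fact that any block other than one labeled $s_i$ or $s_{i+1}$ would commute past these three entries and hence not sit between them in the heap order, this guarantees convexity. The remainder of the argument is then immediate from Theorem~\ref{thm:stem}.
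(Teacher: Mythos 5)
Your argument is correct in substance, and it is worth noting that the paper itself offers no proof of this proposition: it simply asserts that the statement follows from Lemma 2.4.5 of the cited thesis of Ernst and closes with \qed. Your route---assume one of the configurations occurs as a convex subheap, invoke Proposition~\ref{prop:convexsubheapiff} to realize it as the heap of a \emph{subword} of a reduced expression, read off that subword as $\gen{s_i,s_{i+1}}_3$, and contradict Theorem~\ref{thm:stem}---is exactly the self-contained derivation one would want from the tools already developed in the paper, and it buys the reader a proof where the paper only gives a citation.

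One correction to your reading of Figure~\ref{fig:convexnotinFCheaps}, which garbles (but does not invalidate) your discussion of convexity. The dotted empty square is not in the ``middle column'' and does not record the absence of an $s_{i+1}$ (resp.\ $s_i$) block: in the configuration with outer blocks labeled $i+1$ and middle block labeled $i$, the empty square sits in column $i+2$ at the middle level, recording that no $s_{i+2}$ block of $H(w)$ lies between the two $s_{i+1}$ blocks; in the configuration with outer blocks labeled $i$, it sits in column $i-1$ and records the absence of an $s_{i-1}$ block. This is precisely the condition that prevents a situation like $\w = s_2s_3s_5s_4s_6s_5$ from the paper's earlier example, where an intervening $s_6$ block makes the three-block subposet non-convex and no braid subword arises. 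Since the proposition's hypothesis is that the configuration occurs as a \emph{convex} subheap, your main chain of reasoning is unaffected, but the sentence identifying which generator's absence the dotted square encodes should be fixed.
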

\begin{center} \begin{figure}[H] \centering
\begin{subfigure}{0.3\textwidth} \centering
\begin{tikzpicture}
    \sq{0.5}{2}; \node at (1,1.5) {\footnotesize $i+1$};
    \sq{0}{1}; \node at (0.5,0.5) {\footnotesize $i$};
    \sq{0.5}{0}; \node at (1,-0.5) {\footnotesize $i+1$};
    \bsq{1}{1};
\end{tikzpicture}\caption{}
\end{subfigure}
\begin{subfigure}{0.3\textwidth} \centering
\begin{tikzpicture}
    \sq{0}{2}; \node at (0.5,1.5) {\footnotesize $i$};
    \sq{0.5}{1}; \node at (1,0.5) {\footnotesize $i+1$};
    \sq{0}{0}; \node at (0.5,-0.5) {\footnotesize $i$};
    \bsq{-0.5}{1};
\end{tikzpicture}\caption{}
\end{subfigure}
\caption{The convex subheaps not contained in heaps of FC elements.}\label{fig:convexnotinFCheaps}
\end{figure}
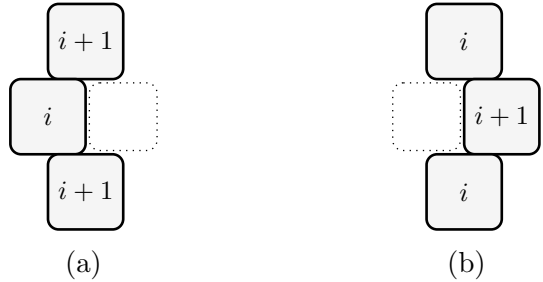 \end{center}

    The following lemma will become useful in proofs in later sections.

\begin{lemma}\label{lem:stst}
If $m(s_i,s_j) = 3$, then $s_is_js_is_j = s_js_i$.
\end{lemma}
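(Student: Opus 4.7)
The plan is to use the braid relation together with the fact that each generator is an involution. Since $m(s_i,s_j)=3$, relation~\eqref{braid} gives the braid identity $s_is_js_i = s_js_is_j$. I would apply this identity to the first three letters of the word $s_is_js_is_j$, rewriting it as $(s_is_js_i)s_j = (s_js_is_j)s_j$.

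Next, I would use associativity to regroup as $s_js_i(s_js_j)$, and then cancel $s_js_j = e$ (since every generator is an involution by definition of a Coxeter system), which leaves $s_js_i$. This completes the identity $s_is_js_is_j = s_js_i$.

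There is no real obstacle here — the result is a one-line consequence of the braid relation and the involution property, both of which are built into the definition of $(W,S)$. The only thing to be careful about is distinguishing the direction: the equation reduces a four-letter alternating word to a two-letter one rather than expanding, which is exactly what we want for later arguments that shorten expressions after a braid move.
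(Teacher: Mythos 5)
Your proof is correct and is essentially identical to the paper's: both apply the braid relation $s_is_js_i = s_js_is_j$ to the first three letters of $s_is_js_is_j$ and then cancel the resulting $s_js_j$ using the involution property. No differences worth noting.
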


\begin{proof} Consider the expression $s_is_js_is_j$. Applying a braid relation to the first three generators and simplifying, we get $s_is_js_is_j = s_js_is_js_j = s_js_i$.
\end{proof}

    Recall that we defined heaps for reduced expressions. However, it will be useful for us to extend the stacked blocks representation of a heap to non-reduced expressions, which we do in the obvious way. If $\w$ and $\w'$ are two expressions, not necessarily reduced, for $w \in W$, then we will write $H(\w) \equiv H(\w')$.

    Using this idea, we can write Lemma~\ref{lem:stst} in terms of heaps. If $m(s_i,s_j) = 3$, then we have the equivalent heaps shown in Figure~\ref{fig:lemststheaps}.
\begin{center} 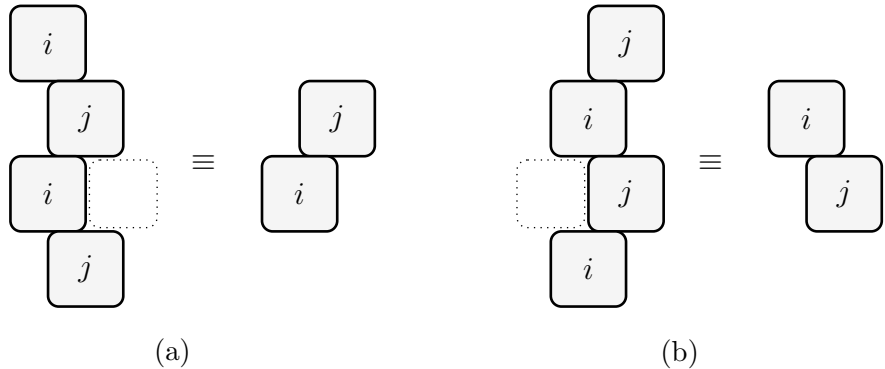
\begin{figure}[H] \centering
\begin{subfigure}{0.4\textwidth} \centering
\begin{tabular}{m{2cm}m{0.5cm}m{1cm}}
\begin{tikzpicture}
    \sq{0}{2};    \node at (0.5,1.5) {$i$};
    \sq{0.5}{1};  \node at (1,0.5)   {$j$};
    \sq{0}{0};    \node at (0.5,-0.5){$i$};
    \sq{0.5}{-1}; \node at (1,-1.5)  {$j$};
    \bsq{1}{0};
\end{tikzpicture} & $\equiv$ &
\begin{tikzpicture}
    \sq{0.5}{1};  \node at (1,0.5)    {$j$};
    \sq{0}{0};    \node at (0.5,-0.5) {$i$};
\end{tikzpicture}
\end{tabular} \caption{}
\end{subfigure}
\begin{subfigure}{0.4\textwidth} \centering
\begin{tabular}{m{2cm}m{0.5cm}m{1cm}}
\begin{tikzpicture}
    \sq{0.5}{2};  \node at (1,1.5)   {$j$};
    \sq{0}{1};    \node at (0.5,0.5) {$i$};
    \sq{0.5}{0};  \node at (1,-0.5)  {$j$};
    \sq{0}{-1};   \node at (0.5,-1.5){$i$};
    \bsq{-0.5}{0};
\end{tikzpicture} & $\equiv$ &
\begin{tikzpicture}
    \sq{0}{1};    \node at (0.5,0.5) {$i$};
    \sq{0.5}{0};  \node at (1,-0.5)  {$j$};
\end{tikzpicture}
\end{tabular} \caption{}
\end{subfigure}
\caption{Lemma~\ref{lem:stst} in terms of heaps.}\label{fig:lemststheaps}
\end{figure} \end{center}

    Let $w \in W(A_n)$ with expression $\w$. If $s_is_js_is_j$ is a subword of $\w$ with $m(s_i,s_j) = 3$, then we refer to $s_is_js_is_j$ as an \emph{extra long $s_is_j$-chain}.

\begin{remark} From now on, for brevity, we may write $i$ in place of the generator $s_i$. For example, we may write $123$ in place of $s_1s_2s_3$.
\end{remark}


\chapter{Cyclically fully commutative elements}

\section{Cyclically reduced elements}
    Recall that $s^{-1} = s$ for all $s \in S$, so $sws^{-1} = sws$.
    Given a word $\w = s_{x_1} s_{x_2} \cdots s_{x_k}$ for $w \in W$, a \emph{cyclic shift} of $\w$ is defined to be the natural expression that arises by conjugating $w$ by $s_{x_1}$.
    That is, $$s_{x_1} s_{x_2} \cdots s_{x_k} ~{\mapsto}~ s_{x_2} \cdots s_{x_k} s_{x_1}$$ since $s_{x_1}(s_{x_1} s_{x_2} \cdots s_{x_k})s_{x_1} = s_{x_2} \cdots s_{x_k} s_{x_1}$.

\begin{definition}\label{def:cycred}
    Let $(W,S)$ be a Coxeter system and let $\w$ be a reduced expression for some $w \in W$. If every cyclic shift of $\w$ is a reduced expression for some element in $W$, then we say that $\w$ is \emph{cyclically reduced}. A group element $w \in W$ is \emph{cyclically reduced} if every reduced expression for $w$ is cyclically reduced.
\end{definition}

    If $w$ is cyclically reduced, we can write every reduced expression for $w$ in a circle without creating any collapse in length.
    
\begin{example}\label{ex:kappa} We now consider a couple of examples.
\begin{enumerate}[label=(\alph*), leftmargin=0.75in]
\item Consider the Coxeter group of type $A_5$. Let $w \in W$ have reduced expression $\w = 31245$.
    The cyclic version of $w$ is shown in Figure~\ref{fig:circles}. In this case, $w$ is clearly cyclically reduced since there are no repeat generators. That is, we never have two adjacent occurrences of the same generator after commutations or braid moves.

\begin{center} 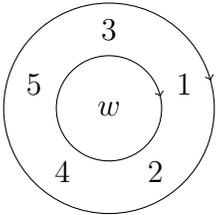
\begin{figure}[H] \centering
\begin{tikzpicture}[scale=0.7]
    \draw[decoration={markings, mark=at position 0.05 with {\arrow{<}}},postaction={decorate}]
        (0,0) circle (2cm);
    \draw[decoration={markings, mark=at position 0.05 with {\arrow{<}}},postaction={decorate}]
        (0,0) circle (1cm);
    \draw (0,0)     node {$w$};
    \draw (90:1.5)  node {3};
    \draw (18:1.5)  node {1};
    \draw (306:1.5) node {2};
    \draw (234:1.5) node {4};
    \draw (162:1.5) node {5};
\end{tikzpicture}
\caption{Cyclically reduced group element of $W(A_4)$ written in a circle.} \label{fig:circles}
\end{figure} \end{center}

\item Consider the Coxeter group of type $A_4$. Let $w \in W$ have reduced expression $\w = 342132$. Then $w$ is not cyclically reduced, as shown in Figure~\ref{fig:notcycred}.
\begin{center} \begin{figure}[H] \centering
\begin{tabular}{m{3cm} m{0.2cm} m{3cm} m{0.2cm} m{3cm} m{0.2cm} m{3cm}}
\begin{tikzpicture}[scale=0.7]
    \draw[decoration={markings, mark=at position 0.05 with {\arrow{<}}},postaction={decorate}]
        (0,0) circle (2cm);
    \draw[decoration={markings, mark=at position 0.05 with {\arrow{<}}},postaction={decorate}]
        (0,0) circle (1cm);
    \draw (0,0)        node {$w$};
    \draw (90:1.5)     node {\textcolor{blue}{3}};
    \draw (30:1.5)     node {4};
    \draw (330:1.5)    node {2};
    \draw (270.71:1.5) node {1};
    \draw (210.28:1.5) node {\textcolor{blue}{3}};
    \draw (150.85:1.5) node {\textcolor{blue}{2}};
\end{tikzpicture} & = &
\begin{tikzpicture}[scale=0.7]
    \draw[decoration={markings, mark=at position 0.05 with {\arrow{<}}},postaction={decorate}]
        (0,0) circle (2cm);
    \draw[decoration={markings, mark=at position 0.05 with {\arrow{<}}},postaction={decorate}]
        (0,0) circle (1cm);
    \draw (0,0)        node {$w$};
    \draw (90:1.5)     node {2};
    \draw (30:1.5)     node {\textcolor{magenta}{4}};
    \draw (330:1.5)    node {\textcolor{magenta}{2}};
    \draw (270:1.5)    node {1};
    \draw (210:1.5)    node {2};
    \draw (150:1.5)    node {3};
\end{tikzpicture} & = &
\begin{tikzpicture}[scale=0.7]
    \draw[decoration={markings, mark=at position 0.05 with {\arrow{<}}},postaction={decorate}]
        (0,0) circle (2cm);
    \draw[decoration={markings, mark=at position 0.05 with {\arrow{<}}},postaction={decorate}]
        (0,0) circle (1cm);
    \draw (0,0)        node {$w$};
    \draw (90:1.5)     node {\textcolor{turq}{2}};
    \draw (30:1.5)     node {\textcolor{turq}{2}};
    \draw (330:1.5)    node {4};
    \draw (270:1.5)    node {1};
    \draw (210:1.5)    node {2};
    \draw (150:1.5)    node {3};
\end{tikzpicture} & = &
\begin{tikzpicture}[scale=0.7]
    \draw[decoration={markings, mark=at position 0.05 with {\arrow{<}}},postaction={decorate}]
        (0,0) circle (2cm);
    \draw[decoration={markings, mark=at position 0.05 with {\arrow{<}}},postaction={decorate}]
        (0,0) circle (1cm);
    \draw (0,0)        node {$w$};
    \draw (90:1.5)     node {3};
    \draw (0:1.5)      node {4};
    \draw (270.71:1.5) node {1};
    \draw (180.28:1.5) node {2};
\end{tikzpicture}
\end{tabular}
\caption{Not cyclically reduced group element of $W(A_4)$ written in a circle}\label{fig:notcycred}
\end{figure}
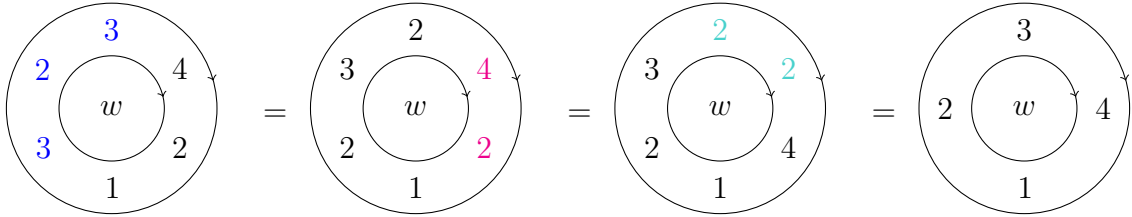 \end{center}
\end{enumerate}
\end{example}

    Now it is natural to ask the question: Do two cyclically reduced expressions for conjugate group elements differ by a sequence of commutations, braid moves, and cyclic shifts?
    
    Unfortunately the answer is no in general, but it is often true. One of the goals of the authors of~\cite{Boothby2012} is to understand when the answer is yes.
    This motivates the following definition.

\begin{definition}\label{def:CVMT} Let $W$ be a Coxeter group. We say that a conjugacy class $C$ satisfies the \emph{cyclic version of Matsumoto's Theorem}, or CVMT, if any two cyclically reduced expressions of elements in $C$ differ by commutations, braid moves, and cyclic shifts.
\end{definition}

    We can easily find an example where the CVMT fails.

\begin{example} Let $W$ be the Coxeter group of type $A_2$. Then $1$ and $2$ (or any two distinct generators in $W(A_n)$) are conjugate since $(12)1(21) = 12121 = 21221 = 211 = 2$, but $1$ and $2$ clearly do not differ by a sequence of commutations, braid moves, and cyclic shifts.
\end{example}

    It is well known that if $s_i \in S$, then $\ell(s_i w) = \ell(w) \pm 1$, and so $\ell(w^k) \leq k \cdot \ell(w)$. If equality holds for all $k \in \N$, we say that $w$ is \emph{logarithmic}.
    If every connected component of $\Gamma_{\supp(w)}$ (that is, the subgraph of $\Gamma$ induced by the generators which appear in $w$) describes an infinite Coxeter group, then we say that $w$ is \emph{torsion-free}.

\begin{proposition}[Boothby, et al.,~\cite{Boothby2012}] \label{prop:logarithmic} Let $W$ be a Coxeter group. If $w\in W$ is logarithmic, then $w$ is cyclically reduced and torsion-free. \qed
\end{proposition}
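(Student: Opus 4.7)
The plan is to prove both conclusions by contrapositive, using that $w$ being logarithmic means $\ell(w^k) = k\ell(w)$ for every $k \in \N$, so in each case it suffices to exhibit a single power $k$ for which $\ell(w^k) < k\ell(w)$.

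For the cyclically reduced conclusion, suppose some reduced expression for $w$ admits a cyclic shift that is not reduced. Split the expression at the shift point as a concatenation of subwords whose rotated product is not reduced, and let $u, v \in W$ be the group elements they represent. Then $\ell(u) + \ell(v) = \ell(w)$ because the original expression is reduced, while $\ell(vu) < \ell(u) + \ell(v)$ because the rotated expression is not. Now $w^2 = (uv)(uv) = u(vu)v$, so concatenating reduced expressions for $u$, $vu$, and $v$ produces an expression for $w^2$ of total length $\ell(u) + \ell(vu) + \ell(v) < 2\ell(w)$. Hence $\ell(w^2) < 2\ell(w)$, contradicting logarithmicity.

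For the torsion-free conclusion, suppose some connected component $\Gamma'$ of $\Gamma_{\supp(w)}$ defines a finite irreducible Coxeter group $W'$. The parabolic subgroup $W_{\supp(w)}$ decomposes as an internal direct product $W' \times W''$, where $W''$ is the parabolic on the remaining components and every element of $W'$ commutes with every element of $W''$. Write $w = w'w''$ accordingly; since the two factors use disjoint, pairwise commuting generator sets, concatenating reduced expressions for $w'$ and $w''$ yields a reduced expression for $w$, so $\ell(w) = \ell(w') + \ell(w'')$. Because $\Gamma'$ is a component of $\Gamma_{\supp(w)}$, it contains at least one generator that appears in $w$, so $w' \neq e$; since $W'$ is finite, $w'$ has some finite order $m \geq 2$. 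Then $w^m = (w')^m (w'')^m = (w'')^m$, and therefore $\ell(w^m) \leq m\ell(w'') = m\ell(w) - m\ell(w') < m\ell(w)$, again contradicting logarithmicity.

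The main subtleties are the two length identities invoked. The identity $\ell(w) = \ell(u) + \ell(v)$ when the concatenation is reduced is immediate from the definition, but the identity $\ell(w) = \ell(w') + \ell(w'')$ under the direct-product decomposition requires one to check that no length collapse occurs when concatenating across the two factors; this holds because any cancellation via commutations and braid relations (Matsumoto) can be carried out inside $W'$ or inside $W''$ separately, since generators from different components commute, and so any reduction would contradict the assumed reducedness of the chosen expressions for $w'$ and $w''$. With these in hand, the rest is a clean triangle-inequality style application of $\ell$.
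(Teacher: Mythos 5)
The paper does not actually prove this proposition: it is imported from~\cite{Boothby2012} and stamped with a \emph{qed} symbol, so there is no in-paper argument to compare yours against. Your proof is correct and is essentially the standard one from that reference. For the first claim, writing the reduced word as a concatenation $\mathsf{u}\mathsf{v}$ whose rotation $\mathsf{v}\mathsf{u}$ is not reduced and using $w^2 = u(vu)v$ does give $\ell(w^2) \leq \ell(u) + \ell(vu) + \ell(v) < 2\ell(w)$, contradicting logarithmicity. For the second, the decomposition $W_{\supp(w)} \cong W' \times W''$ over the connected components of $\Gamma_{\supp(w)}$, with $w = w'w''$, $w' \neq e$ of finite order $m \geq 2$ in the finite factor, yields $\ell(w^m) \leq m\ell(w'') = m\ell(w) - m\ell(w') < m\ell(w)$. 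The two length identities you single out are indeed the only points needing care, and both are handled correctly: additivity of length across commuting parabolic factors is standard, and your Matsumoto-based justification suffices. I see no gap.
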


    It follows from a result in~\cite{Speyer2009} together with the fact that Coxeter elements are trivially cyclically reduced that the converse of Proposition~\ref{prop:logarithmic} holds for Coxeter elements.

\begin{theorem}[Speyer,~\cite{Speyer2009}]\label{thm:speyer} In any Coxeter group, a Coxeter element is logarithmic if and only if it is torsion-free. \qed
\end{theorem}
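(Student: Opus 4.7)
The plan is to prove both directions by first reducing to the case of an irreducible (i.e.\ connected) Coxeter diagram, then handling the forward direction by a short order argument and the reverse direction through the standard geometric representation.

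First I would reduce to the irreducible case. Because a Coxeter element $c$ has full support, and because generators in different connected components of $\Gamma$ commute, we can factor $c = c_1 c_2 \cdots c_m$, where $c_i$ is a Coxeter element of $W(\Gamma_i)$ for each connected component $\Gamma_i$ of $\Gamma$, and the factors $c_i$ pairwise commute. Since $W \cong W(\Gamma_1)\times\cdots\times W(\Gamma_m)$ on the nose, lengths add over this product: $\ell(c^k) = \sum_i \ell(c_i^k)$ and $\ell(c)=\sum_i \ell(c_i)$. Thus $c$ is logarithmic if and only if each $c_i$ is logarithmic, and $c$ is torsion-free (by definition) if and only if each $W(\Gamma_i)$ is infinite. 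So we may assume $\Gamma$ is connected.

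The forward direction is then easy. Suppose $c$ is logarithmic. Then $\ell(c^k)=k\ell(c)\to\infty$, so $c^k\neq e$ for every $k\geq 1$ and $c$ has infinite order. In a finite Coxeter group, every Coxeter element has finite order (namely the Coxeter number $h$), so $W$ must be infinite; equivalently, $c$ is torsion-free.

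For the reverse direction, assume $W$ is irreducible and infinite and let $n=|S|$; I need to show $\ell(c^k)=kn$ for all $k$. The plan is to pass to the standard geometric representation of $W$ on a real vector space $V$ with basis $\{\alpha_s\}_{s\in S}$ and symmetric bilinear form $B(\alpha_s,\alpha_t)=-\cos(\pi/m(s,t))$ (interpreting $m=\infty$ appropriately), so that each $s\in S$ acts as the reflection $v\mapsto v-2B(\alpha_s,v)\alpha_s$. The relevant criterion is that a word $s_{j_1}\cdots s_{j_N}$ is reduced if and only if the $N$ roots $\beta_r := s_{j_1}\cdots s_{j_{r-1}}\alpha_{j_r}$ for $r=1,\ldots,N$ are all distinct positive roots. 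Fix any reduced expression $s_{i_1}\cdots s_{i_n}$ for $c$ with associated positive inversion roots $\beta_1,\ldots,\beta_n$. The concatenated word obtained by writing this expression $k$ times is an expression for $c^k$ of length $kn$ whose sequence of inversion roots is exactly $\{c^{p-1}\beta_j : 1\leq p\leq k,\ 1\leq j\leq n\}$. The task reduces to showing these $kn$ roots are distinct positive roots.

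The main obstacle, and the true content of Speyer's theorem, is this last spectral/geometric step: one must show that for an irreducible infinite Coxeter group, the Coxeter element $c$ acts on $V$ in such a way that the forward orbits $\{c^p\beta_j\}_{p\geq 0}$ consist of distinct positive roots and do not collide across different values of $j$. This splits into two regimes that must be handled in tandem. In the indefinite (non-affine) case, $c$ has a real Perron-type eigenvalue strictly greater than $1$, giving exponential growth of iterates along the dominant eigendirection and ruling out coincidences by norm comparison. In the affine case, $c$ has eigenvalue $1$ on the radical of $B$ with a genuine translation component, and the drift by this translation is what separates successive iterates; here one must work with the Tits cone and track the lineality directions carefully. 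Uniting these two cases into a single uniform proof is the technical heart of Speyer's argument, and it is the step that genuinely requires infiniteness of $W$ rather than formal manipulations of reduced words.
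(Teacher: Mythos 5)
The paper does not actually prove this statement: it is imported verbatim from Speyer's paper and closed with a q.e.d.\ box, and the surrounding discussion only reports that the proof is ``combinatorial'' and rests on the bijection between Coxeter elements and acyclic orientations of $\Gamma$ together with the source-to-sink operation. So there is no in-paper argument to measure yours against; the question is whether your proposal stands on its own. The framing is fine: your reduction to the irreducible case is sound (lengths add over the direct product decomposition, and for a full-support element torsion-freeness is by definition the statement that each component group is infinite), the forward direction is genuinely the easy one, and the setup of the reverse direction is correct --- concatenating a reduced word for $c$ with itself $k$ times gives a length-$kn$ word for $c^k$ whose inversion roots are the $c^{p-1}\beta_j$, and reducedness is equivalent to these all being (distinct) positive roots.

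The gap is that this last claim \emph{is} the theorem, and you do not prove it: you explicitly label it ``the technical heart of Speyer's argument,'' describe two regimes (a Perron eigenvalue greater than $1$ in the indefinite case, a translation component on the radical of $B$ in the affine case), and stop. Neither regime is routine. In the indefinite case you would have to produce the dominant eigenvector, show each $\beta_j$ has a nonzero (and correctly signed) component along it, and still rule out collisions among roots whose growth rates coincide; in the affine case you would have to identify the translation part of $c$, show it is nonzero, and verify that the drift separates all $n$ orbits simultaneously while keeping every iterate positive. A writeup whose central step is a description of what would need to be done is an outline, not a proof. To complete it you would need either to carry out both case analyses in full or to find a uniform argument that derives a contradiction (finiteness or reducibility of $W$) from the assumption that some $c^{p}\beta_j$ fails to be a new positive root --- which is essentially what Speyer's short paper does, and which your proposal gestures at but does not supply.
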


    The proof of Theorem~\ref{thm:speyer} is combinatorial and relies on a natural bijection between the set $\C(W)$ of Coxeter elements and the set $\Acyc(\Gamma)$ of acyclic orientations of the Coxeter graph.
    Specifically, if $c \in \C(W)$, let $(\Gamma,c)$ denote the digraph where, if $m(s_i,s_j) \geq 3$, the edge $\{s_i,s_j\}$ in the Coxeter graph $\Gamma$ is oriented as $(s_i,s_j)$ if $s_i$ appears before $s_j$ in $c$.
    The vertex $s_{x_i}$ is a source (respectively, sink) of $(\Gamma,c)$ if and only if $s_{x_i}$ is initial (respectively, terminal) in some reduced expression for $c$. 
    Conjugating a Coxeter element $c = s_{x_1} \cdots s_{x_n}$ by $s_{x_1}$ cyclically shifts the word $s_{x_1} \cdots s_{x_n}$ to $s_{x_2} \cdots s_{x_n}s_{x_1}$ since
\begin{equation} s_{x_1}(s_{x_1}s_{x_2}\cdots s_{x_n})s_{x_1} = s_{x_2} \cdots s_{x_n}s_{x_1},\end{equation}
    and, on the level of acyclic orientations, this corresponds to converting the source vertex $s_{x_1}$ of $(\Gamma,c)$ into a sink, which takes the orientation $(\Gamma,c)$ to $(\Gamma,s_{x_1}cs_{x_1})$.
    This generates an equivalence relation $\sim_\kappa$ on $\Acyc(\Gamma)$ and on $\C(W)$.
    Two acyclic orientations $(\Gamma,c)$ and $(\Gamma,c')$ are \emph{$\kappa$-equivalent} if and only if there is a sequence $x_1,\dots,x_k$ such that $c' = s_{x_k}\cdots s_{x_1} c s_{x_1}\cdots s_{x_k}$ and $s_{x_{i+1}}$ is a source vertex of $(\Gamma,s_{x_i}\cdots s_{x_1}cs_{x_1}\cdots s_{x_i})$ for each $i=1,\dots,k-1$.
    
    Thus, two Coxeter elements $c,c' \in \C(W)$ are \emph{$\kappa$-equivalent} if they differ by a sequence of length-preserving conjugations. That is, $c \sim_\kappa c'$ if they are conjugate by $s_{x_1}\cdots s_{x_k}$ such that
    $$\ell(c) = \ell(s_{x_i} \cdots s_{x_1} c s_{x_1} \cdots s_{x_i})$$ holds for each $i = 1,\ldots, k$.

    Performing a cyclic shift of a reduced expression of an arbitrary element $w \in W$ yields an element that is conjugate to $w$, but an element conjugate to $w$ is not necessarily a cyclic shift of $w$.
    The following result by H.~Eriksson and K.~Eriksson shows that conjugation and cyclic shifts are the same for Coxeter elements.

\begin{theorem}[Eriksson--Eriksson,~\cite{Eriksson2009}] \label{thm:e2} Let $W$ be a Coxeter group and let $c, c' \in \C(W)$. Then $c$ and $c'$ are conjugate if and only if $c$ and $c'$ are $\kappa$-equivalent. \qed
\end{theorem}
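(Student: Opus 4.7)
The forward direction is immediate from the definition: if $c \sim_\kappa c'$ via $s_{x_1},\dots,s_{x_k}$, then $c' = s_{x_k}\cdots s_{x_1} c s_{x_1}\cdots s_{x_k}$, so $c$ and $c'$ are conjugate. For the reverse direction, the plan is to work entirely in the acyclic orientation picture. Since each generator appears exactly once in any reduced expression for a Coxeter element, no braid move is ever available; hence Coxeter elements are FC and the correspondence $c \leftrightarrow (\Gamma,c)$ described in the preceding discussion is a bijection $\C(W) \to \Acyc(\Gamma)$. Under this bijection, conjugation by a generator $s$ preserves length (and outputs another Coxeter element) precisely when $s$ is a source or a sink of $(\Gamma,c)$, in which case the effect on the orientation is a single source-to-sink flip. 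The theorem is therefore equivalent to the purely combinatorial statement that two acyclic orientations of $\Gamma$ correspond to conjugate Coxeter elements if and only if they lie in the same orbit under source-to-sink flips.

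To prove this combinatorial statement, I would induct on the length $k$ of a shortest conjugator $w = s_{x_1}\cdots s_{x_k}$ satisfying $wcw^{-1} = c'$, the base case $k=0$ being trivial. Consider the sequence of partial conjugates $c_i := s_{x_i}\cdots s_{x_1} c s_{x_1}\cdots s_{x_i}$. Since conjugation by a single generator changes length by $-2$, $0$, or $+2$, and $c_0 = c$ and $c_k = c'$ both have length $n$, the length profile $\ell(c_0),\ldots,\ell(c_k)$ starts and ends at $n$. The crux of the argument is to show that minimality of $k$ forces $s_{x_1}$ to be a source of $(\Gamma,c)$. Granting this, $c_1$ is again a Coxeter element, $c \sim_\kappa c_1$ by a single source-to-sink flip, and $c_1$ and $c'$ are conjugated by the strictly shorter word $s_{x_2}\cdots s_{x_k}$, so induction closes.

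The main obstacle is establishing that crux. My strategy is a length-and-exchange argument: if $s_{x_1}$ were neither a source nor a sink of $(\Gamma,c)$, then $\ell(c_1) = n+2$, and since $\ell(c_k) = n$ there is a smallest $i$ with $\ell(c_i) < \ell(c_{i-1})$. Applying an exchange-type argument to a reduced expression for $c_{i-1}$ extended by $s_{x_i}$ should locate a letter that can be cancelled against $s_{x_i}$; tracking this cancellation back through the conjugation ought to produce a pair of letters that can be removed from $w$, contradicting minimality. The delicate point is checking that after cancellation the remaining word still conjugates $c$ to $c'$, which amounts to careful bookkeeping of commutations and Coxeter relations as the cancellation is propagated; this is where I expect the proof to require the most care. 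A cleaner alternative would be to invoke the Pretzel--Mosesyan classification of source-to-sink orbits on $\Acyc(\Gamma)$, reducing the theorem to showing that these orbits are a complete conjugation invariant of the associated Coxeter elements, which can be done by exhibiting a $W$-equivariant invariant on each orbit and checking injectivity.
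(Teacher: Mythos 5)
First, note that the paper does not prove this statement at all: it is quoted from Eriksson--Eriksson~\cite{Eriksson2009} with a \qed, so there is no internal proof to compare against, and your attempt must stand on its own. The forward direction and the translation into acyclic orientations are fine. The problem is that your ``crux'' --- that a shortest conjugator $w = s_{x_1}\cdots s_{x_k}$ with $wcw^{-1}=c'$ must begin with a source (or sink) of $(\Gamma,c)$, so that the whole conjugation can be rearranged into length-preserving elementary steps passing only through Coxeter elements --- is not a lemma on the way to the theorem; it essentially \emph{is} the theorem, and your sketch does not establish it. The exchange-style argument you outline has a concrete failure mode: the exchange condition lets you delete a letter from a reduced word for $c_{i-1}s_{x_i}$ (or $s_{x_i}c_{i-1}$), but conjugation multiplies on both sides simultaneously, and a cancellation on one side does not in general propagate to a deletion of a \emph{pair} of letters from $w$ that still conjugates $c$ to $c'$. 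Indeed, the paper's own example of the failure of the CVMT (in $W(A_2)$, $s_1 = (s_1s_2)\,s_2\,(s_1s_2)^{-1}$ is conjugate to $s_2$, yet no length-preserving path connects them) shows that any such bookkeeping argument must break for general elements; a correct proof has to exploit a specific structural property of Coxeter elements (the Erikssons' published argument is a genuinely involved induction, and later proofs, e.g.\ via Speyer-type logarithmicity or Pretzel's classification of source-to-sink orbits by the circulation invariant on cycles, all require substantial additional work). Your fallback --- ``exhibit a $W$-equivariant invariant on each orbit and check injectivity'' --- is likewise a restatement of what must be proved rather than a proof. As it stands the hard direction is open in your write-up.
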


\section{Cyclically fully commutative elements}\label{sec:CFC}
    Note that the Erikssons' result is the CVMT applied to Coxeter elements. Despite the fact that the CVMT does not hold in general, we wish to gain understanding about when it does.

    The proof of Theorem~\ref{thm:e2} depends on torsion-free Coxeter elements being logarithmic, and the proof of this involves combinatorial properties of the acyclic orientation construction and source-to-sink equivalence relation.
    Thus, we are motivated to extend these properties to a larger class of elements. In fact, the acyclic orientation construction above generalizes to the FC elements.
    If $w \in \FC(W)$, then $(\Gamma,w)$ is the graph whose vertices are the disjoint union of generators in any reduced expression of $w$, and a directed edge is present for each pair of noncommuting generators, with the orientation denoting which comes first in $w$.
    Since $w \in \FC(W)$, i.e., $w$ has no opportunity for braid moves, the graph $(\Gamma,w)$ is well-defined. Though the acyclic orientation construction extends from $\C(W)$ to $\FC(W)$, the source-to-sink operation does not because a cyclic shift of a reduced expression for an FC element need not be FC.
    
\begin{example} Let $w \in W(A_4)$ have reduced expression $\w = 213243$. Then $w$ is FC because there is no opportunity to apply a braid move in any reduced expression for $w$, but a cyclic shift of $\w$ is commutation equivalent to a word containing a \textcolor{blue}{blue} $\gen{23}_3$ subword since
    $$213243 \overset{2}{\mapsto} 13\textcolor{magenta}{24}32 = 134\textcolor{blue}{232}$$ after applying a commutation to the \textcolor{magenta}{pink} subword, where $\overset{i}{\longmapsto}$ indicates a cyclic shift by $i$.
\end{example}

    The previous example motivates the following definition.

\begin{definition}\label{def:CFC} An element $w \in W$ is \emph{cyclically fully commutative}, or CFC, if every cyclic shift of every reduced expression for $w$ is a reduced expression for an FC element.
\end{definition}

    We denote the set of CFC elements of $W$ by $\CFC(\Gamma)$, where $\Gamma$ is the Coxeter graph corresponding to $W$, or $\CFC(W)$.
    CFC elements are exactly the elements whose reduced expressions, when written in a circle, avoid $\gen{s,t}_{m(s,t)}$ subwords for $m(s,t) \geq 3$, and hence they are the elements for which the source-to-sink operation extends in a well-defined manner.
    The remainder of this thesis considers CFC elements.
    
\begin{example} Let $W$ be the Coxeter group of type $A_4$ and let $w, y \in W$ have reduced expressions $\w = 1243$ and $\y = 21324$, respectively. Then both $\w$ and $\y$ are FC, but, when we write each reduced expression in a circle, we have the diagrams shown in Figure~\ref{fig:circlesexample}, so $w$ is CFC because there are no opportunities for braid moves or collapses created in the circle, but $y$ is not CFC since the two adjacent occurrences of 2 collapse after commuting 2 and 4.

\begin{center} 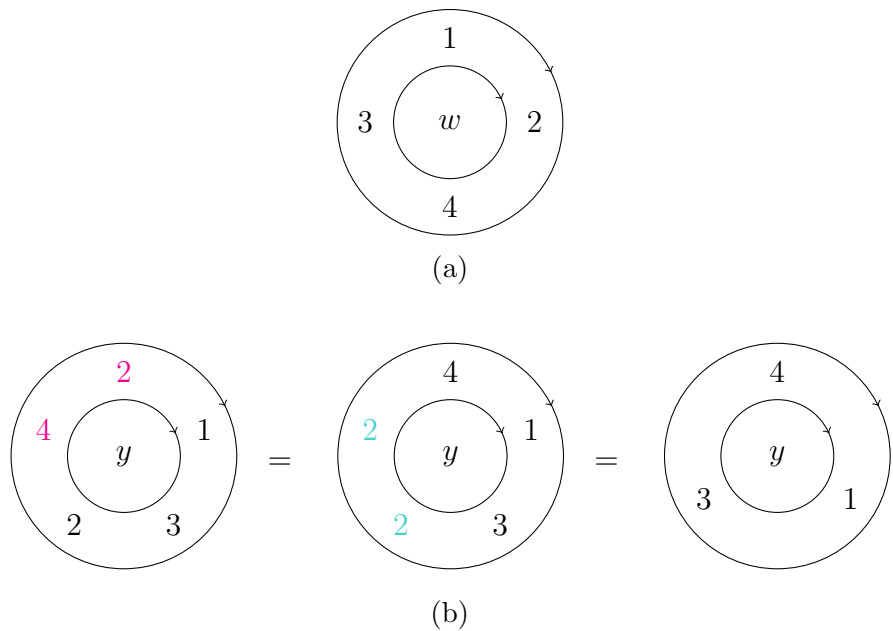
\begin{figure}[H] \centering
\begin{subfigure}{0.4\textwidth} \centering
\begin{tikzpicture}[scale=0.75]
    \draw[decoration={markings, mark=at position 0.08 with {\arrow{<}}},postaction={decorate}]
        (0,0) circle (2cm);
    \draw[decoration={markings, mark=at position 0.08 with {\arrow{<}}},postaction={decorate}]
        (0,0) circle (1cm);
    \draw (0,0)    node {$w$};
    \draw (0,1.5)  node {1};
    \draw (1.5,0)  node {2};
    \draw (0,-1.5) node {4};
    \draw (-1.5,0) node {3};
\end{tikzpicture}
\caption{}
\end{subfigure}
\begin{subfigure}{\textwidth} \centering \vspace{20pt}
\begin{tabular}{m{3cm} m{0.5cm} m{3cm} m{0.5cm} m{3cm}}
\begin{tikzpicture}[scale=0.75]
    \draw[decoration={markings, mark=at position 0.08 with {\arrow{<}}},postaction={decorate}]
        (0,0) circle (2cm);
    \draw[decoration={markings, mark=at position 0.08 with {\arrow{<}}},postaction={decorate}]
        (0,0) circle (1cm);
    \draw (0,0)     node {$y$};
    \draw (90:1.5)  node {\textcolor{magenta}{2}};
    \draw (18:1.5)  node {1};
    \draw (234:1.5) node {2};
    \draw (306:1.5) node {3};
    \draw (162:1.5) node {\textcolor{magenta}{4}};
\end{tikzpicture} & $=$ &
\begin{tikzpicture}[scale=0.75]
    \draw[decoration={markings, mark=at position 0.08 with {\arrow{<}}},postaction={decorate}]
        (0,0) circle (2cm);
    \draw[decoration={markings, mark=at position 0.08 with {\arrow{<}}},postaction={decorate}]
        (0,0) circle (1cm);
    \draw (0,0)     node {$y$};
    \draw (90:1.5)  node {4};
    \draw (18:1.5)  node {1};
    \draw (234:1.5) node {\textcolor{turq}{2}};
    \draw (306:1.5) node {3};
    \draw (162:1.5) node {\textcolor{turq}{2}};
\end{tikzpicture} & $=$ &
\begin{tikzpicture}[scale=0.75]
    \draw[decoration={markings, mark=at position 0.08 with {\arrow{<}}},postaction={decorate}]
        (0,0) circle (2cm);
    \draw[decoration={markings, mark=at position 0.08 with {\arrow{<}}},postaction={decorate}]
        (0,0) circle (1cm);
    \draw (0,0)     node {$y$};
    \draw (90:1.5)  node {4};
    \draw (330:1.5) node {1};
    \draw (210:1.5) node {3};
\end{tikzpicture} \end{tabular}
\caption{}
\end{subfigure}
\caption{Two FC elements of $W(A_4)$ written in a circle.}\label{fig:circlesexample}
\end{figure} \end{center}
\end{example}
    
\begin{remark}\label{rem:CoxCFC}
Coxeter elements are CFC since Coxeter elements are FC and any cyclic shift of a Coxeter element is still a Coxeter element.
\end{remark}
    
\begin{proposition}\label{prop:CFCsubexps} Elements corresponding to subexpressions of Coxeter elements are CFC.
\end{proposition}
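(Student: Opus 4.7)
The plan is to reduce the statement to Remark~\ref{rem:CoxCFC}. The key observation is that any reduced expression for a Coxeter element uses each generator exactly once, so any subexpression uses each generator \emph{at most} once. Let $w$ correspond to a subexpression $\w = s_{x_{i_1}} \cdots s_{x_{i_k}}$ of a reduced expression $s_{x_1}\cdots s_{x_n}$ for a Coxeter element $c$, and set $I := \{s_{x_{i_1}},\ldots,s_{x_{i_k}}\} \subseteq S$.

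First, I would show that $\w$ is a reduced expression. Because no letter of $\w$ is repeated, no sequence of commutations applied to $\w$ can produce a subword of the form $\langle s_i,s_j\rangle_{m(s_i,s_j)}$ with $m(s_i,s_j)\geq 3$ --- such a subword requires two copies of one of $s_i, s_j$ --- so no braid move is ever available starting from $\w$, and no adjacent pair $s_i s_i$ can be produced by commutations either. By Tits's solution to the word problem for Coxeter groups (a standard strengthening of Matsumoto's Theorem~\ref{thm:matsumoto}), $\w$ is therefore reduced. The same observation shows that every reduced expression for $w$ is commutation equivalent to $\w$; in particular, by Theorem~\ref{thm:stem}, $w$ is FC, and every reduced expression for $w$ uses each generator of $\supp(w) = I$ exactly once.

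Thus $w$ is a Coxeter element of the parabolic subsystem $(W_I, I)$, where $W_I := \langle I\rangle$. Since reduced expressions, commutations, braid moves, and cyclic shifts for $w$ involve only generators in $I$, the property of being CFC in $(W,S)$ coincides with the property of being CFC in $(W_I, I)$. Applying Remark~\ref{rem:CoxCFC} to the Coxeter system $(W_I, I)$ immediately yields that $w$ is CFC.

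The only nontrivial step is the reducedness of $\w$. While intuitively evident from the absence of repeated letters, it formally requires Tits's theorem (equivalently, the Deletion Condition), which is not explicitly stated in the excerpt but is a standard consequence of Matsumoto's theorem. Everything else is a matter of unwinding definitions and invoking Remark~\ref{rem:CoxCFC}.
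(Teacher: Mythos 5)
Your proposal is correct and rests on exactly the same key observation as the paper's proof: a subexpression of a Coxeter element uses each generator at most once, hence is reduced and FC, and no cyclic shift can ever produce a $\gen{st}_{m(s,t)}$ subword. The only cosmetic difference is that you package the conclusion by viewing $w$ as a Coxeter element of the parabolic subsystem on $\supp(w)$ and citing Remark~\ref{rem:CoxCFC}, whereas the paper verifies the no-braid-subword condition on cyclic shifts directly; you are also more careful than the paper about justifying reducedness.
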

\begin{proof} Let $(W,S)$ be a Coxeter system and let $w \in \C(W)$ with reduced expression $\w$. Then $w$ is FC. Let $\w'$ be a subexpression of $\w$. Generators from $S$ appear at most once in $\w'$, so $\w'$ is reduced and FC, as well.
    Hence every cyclic shift of $\w'$ has at most one appearance of each generator, so no cyclic shift of $\w'$ will have $\gen{st}_{m(s,t)}$ as a subword for all $s,t \in \supp(\w')$. Thus, the group element corresponding to $\w'$ is CFC.
\end{proof}

    The following classification of CFC elements in Coxeter groups of type $A_n$ is Proposition 5.4 in~\cite{Boothby2012}.
    
\begin{proposition}[Boothby, et al.,~\cite{Boothby2012}] \label{prop:CFCiffatmostonce} Let $w \in W(A_n)$. Then $w$ is CFC if and only if each generator in $\supp(w)$ appears exactly once. \qed
\end{proposition}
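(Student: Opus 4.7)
For the forward direction, suppose each generator of $\supp(w)$ appears exactly once in a reduced expression $\w=s_{x_1}\cdots s_{x_k}$ for $w$. Appending the remaining generators of $S\setminus\supp(w)$ in any order produces a word of length $n$ in which each generator of $W(A_n)$ appears exactly once; this is a reduced expression for a Coxeter element of $W(A_n)$, and $\w$ sits inside it as a subexpression. Proposition~\ref{prop:CFCsubexps} then yields $w\in\CFC(W(A_n))$.

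For the reverse direction I argue the contrapositive: if some $s_i$ appears at least twice in a reduced expression $\w$ for $w$, then $w$ is not CFC. Assume for contradiction that $w$ is CFC. Choose two cyclically consecutive occurrences of $s_i$ in $\w$ and cyclically shift so that they bound a subword $\beta$ containing no $s_i$, producing a reduced FC expression $\w'=s_i\beta s_i\gamma$. The key claim is that $\beta$ must contain $s_{i-1}$ (if $i\geq 2$) and $s_{i+1}$ (if $i\leq n-1$). First, if $\beta$ contains neither, then all of $\beta$ commutes with $s_i$, and the two bracketing $s_i$'s cancel, contradicting reducedness of $\w'$. Now suppose $\beta$ contains only $s_{i-1}$'s and no $s_{i+1}$ (the symmetric case is analogous). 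If there is a single $s_{i-1}$ in $\beta$, the letters of $\beta$ on either side of it commute with $s_i$, so commutations produce the consecutive subword $s_is_{i-1}s_i$, contradicting FC by Theorem~\ref{thm:stem}. If $\beta$ contains $\geq 2$ copies of $s_{i-1}$, pass to a sub-arc between two consecutive $s_{i-1}$'s; this sub-arc contains no $s_i$, no $s_{i+1}$, and no $s_{i-1}$, so the only remaining non-commuting neighbor of $s_{i-1}$ is $s_{i-2}$, and we repeat the argument with indices shifted down by one. This descent strictly decreases the central index at each step; it must eventually demand copies of $s_0$, which does not exist, at which point the corresponding sub-arc commutes entirely with its bracketing $s_1$'s and collapses, a contradiction.

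With the key claim established, I cascade through generators. Since each cyclic arc between consecutive $s_i$'s contains at least one $s_{i-1}$ and these arcs are disjoint, $s_{i-1}$ appears at least as many times as $s_i$, hence at least twice. Applying the key claim to $s_{i-1}$ forces $s_{i-2}$ to also appear at least twice, and iterating downward, $s_1$ must appear at least twice in $\w$. But then the key claim applied to $s_1$ demands cyclic arcs containing $s_0$, a contradiction. (If $i$ itself lies at the boundary $i=1$ or $i=n$, the key claim applied directly to $s_i$ produces the contradiction with no cascade required, via the descent ascending through $s_2,s_3,\dots$ to the nonexistent $s_{n+1}$, or the symmetric statement.) The main technical obstacle is carrying out the descent inside the key claim rigorously: one must track the growing set of excluded generators in the nested sub-arcs (inherited from the outer arc together with the successive choices of ``consecutive'' occurrences) and verify via Theorem~\ref{thm:stem} together with reducedness of $\w'$ that the only available non-commuting neighbor at each stage of the descent is the next generator down the chain, so that the cascade is genuinely forced.
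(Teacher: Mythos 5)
The paper does not actually prove this proposition: it is quoted from Boothby et al.\ and stamped with a tombstone immediately, so there is no internal proof to compare against. Your argument is essentially sound and self-contained. Your ``forward'' direction (each generator appearing once implies CFC) is exactly the content of Proposition~\ref{prop:CFCsubexps}, which the paper does prove, and embedding $\w$ into a reduced word for a Coxeter element is a clean way to invoke it (though note your labels ``forward''/``reverse'' are swapped relative to the statement of the biconditional). The other direction via the contrapositive---bracketing a repeated generator $s_i$ by a cyclic shift, showing the enclosed arc must contain both noncommuting neighbors of $s_i$ on pain of either a length collapse or a $\gen{s_is_{i\pm 1}}_{3}$ subword forbidden by Theorem~\ref{thm:stem}, and then cascading the repetition down to $s_1$---is correct. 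The descent you worry about does go through: at stage $j$ the excluded set is the interval of generators from the current central index up to $s_{i+1}$, so the only noncommuting neighbor of the current generator that can appear in the sub-arc is the next one down the path $A_n$, the central index is strictly monotone, and the path is finite, so the process terminates in a cancellation or a braid. The one genuine misstatement is the sentence claiming that the key claim applied to $s_1$ ``demands cyclic arcs containing $s_0$'': the claim for $s_1$ only demands $s_2$, and the actual contradiction at $s_1$ comes from rerunning the descent upward through $s_2,s_3,\dots$ until it falls off the top of the graph, exactly as your own parenthetical correctly describes. That is a wording slip rather than a gap, and it does not affect the validity of the argument.
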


    In other words, the CFC elements in $W(A_n)$ are precisely the elements that correspond to reduced subexpressions of the Coxeter elements.

\begin{example} Let $W$ be the Coxeter group of type $A_3$. The set of CFC elements of $W$ is
	$$\CFC(A_3) = \{e, 1, 2, 3, 13, 12, 21, 23, 32, 123, 321, 132, 231\}.$$
\end{example}

\section{Cylindrical heaps}
    Let $w \in W(A_n)$ have reduced expression $\w$ and suppose $\w$ is commutation equivalent to a reduced expression that begins with $s_i$. Then a block labeled by $i$ occurs at the top of the heap $H(\w)$.
    A \emph{cyclic shift of $H(\w)$ with respect to $i$} is the heap that results from removing the block labeled by $i$ from the top of the heap and appending it to the bottom.
    In other words, if $\w$ is commutation equivalent to $s_i\u$, then a cyclic shift of $H(\w)$ with respect to $i$ is the heap $H(\u s_i)$.
    Note that $H(\u s_i)$ may not be the heap for a reduced expression. 
    However, since $\CFC(A_n) \subseteq \FC(A_n)$, any $w \in \CFC(A_n)$ has a unique heap and cyclic shifts of reduced expressions of CFC elements are reduced, so if $w$ is CFC, then $H(\u s_i)$ is the unique heap obtained by performing a cyclic shift on $w$.
    
    Consider the equivalence relation $\approx_\kappa$ generated by cyclic shifts of heaps.
    It is clear that $w$ is CFC if and only if all heaps in the equivalence class for $H(w)$ are heaps for reduced expressions of FC elements.
    Let $w, w' \in \CFC(A_n)$. Then $H(w)$ and $H(w')$ are \emph{cyclically equivalent} if $H(w)$ and $H(w')$ differ by a sequence of cyclic shifts.
    We emphasize that cyclically equivalent is only defined for heaps corresponding to CFC elements.

\begin{example} \label{ex:CFC} Let $W$ be the Coxeter group of type $A_7$.
\begin{enumerate}[leftmargin=0.75in, label=(\alph*)]
\item The group element corresponding to the heap in Figure~\ref{fig:cylheapsex1} is CFC since every sequence of cyclic shifts of the heap corresponds to a reduced expression for an FC element.
\begin{center} \begin{figure}[H] \centering \begin{tikzpicture}[scale=0.85]
    \sq{0}{2};   \node at (0.5,1.5)  {\footnotesize $4$};
    \sq{0.5}{1}; \node at (1,0.5)    {\footnotesize $5$};
    \sq{1}{0};   \node at (1.5,-0.5) {\footnotesize $6$};
    \sq{1.5}{-1};\node at (2,-1.5)   {\footnotesize $7$};
\end{tikzpicture}
\caption{The heap of a CFC element in $W(A_7)$.}\label{fig:cylheapsex1} \end{figure} \end{center}

\item The group element corresponding to the heap in Figure~\ref{fig:cylheapsex2.1} is FC because there is no opportunity to apply a braid move, but it is not CFC since the blocks labeled 2 collapse after a cyclic shift, where $\overset{2}{\mapsto}$ denotes a cyclic shift with respect to 2.

\begin{center} 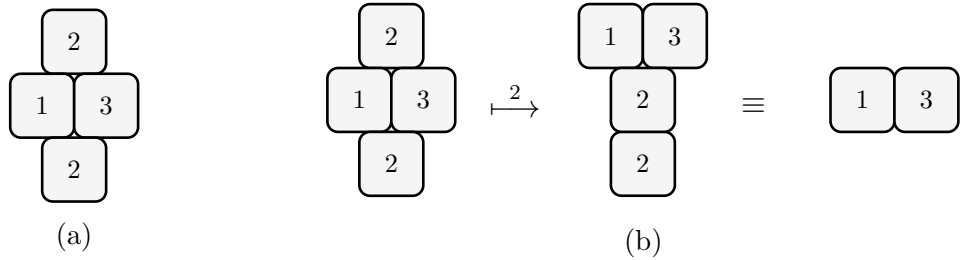
\begin{figure}[H] \centering
\begin{subfigure}{0.35\textwidth} \centering
\begin{tikzpicture}[scale=0.85]
    \sq{0.5}{2}; \node at (1,1.5)   {\footnotesize $2$};
    \sq{0}{1};   \node at (0.5,0.5) {\footnotesize $1$};
    \sq{1}{1};   \node at (1.5,0.5) {\footnotesize $3$};
    \sq{0.5}{0}; \node at (1,-0.5)  {\footnotesize $2$};
\end{tikzpicture} \caption{}\label{fig:cylheapsex2.1}
\end{subfigure}
\begin{subfigure}{0.55\textwidth} \centering
\begin{tabular}{m{1.75cm} m{0.75cm} m{1.75cm} m{0.75cm} m{1.75cm}}
\begin{tikzpicture}[scale=0.85]
    \sq{0.5}{2}; \node at (1,1.5)   {\footnotesize $2$};
    \sq{0}{1};   \node at (0.5,0.5) {\footnotesize $1$};
    \sq{1}{1};   \node at (1.5,0.5) {\footnotesize $3$};
    \sq{0.5}{0}; \node at (1,-0.5)  {\footnotesize $2$};
\end{tikzpicture} & $\overset{2}{\longmapsto}$ &
\begin{tikzpicture}[scale=0.85]
    \sq{0.5}{-1}; \node at (1,-1.5)  {\footnotesize $2$};
    \sq{0}{1};    \node at (0.5,0.5) {\footnotesize $1$};
    \sq{1}{1};    \node at (1.5,0.5) {\footnotesize $3$};
    \sq{0.5}{0};  \node at (1,-0.5)  {\footnotesize $2$};
\end{tikzpicture} & $\equiv$ &
\begin{tikzpicture}[scale=0.85]
    \sq{0}{1}; \node at (0.5,0.5) {\footnotesize $1$};
    \sq{1}{1}; \node at (1.5,0.5) {\footnotesize $3$};
\end{tikzpicture}
\end{tabular} \caption{}\label{fig:cylheapsex2.2}
\end{subfigure}
\caption{The heap of an FC (but not CFC) element in $W(A_7)$.}\label{fig:cylheapsex2}
\end{figure} \end{center}

\item The group element corresponding to the heap in Figure~\ref{fig:cylheapsex3} is not CFC since it is not even FC by Proposition~\ref{prop:convexsubheap}; $w$ has a reduced expression with $\gen{23}_3$ as a subword, highlighted in \textcolor{blue}{blue} in the heap.
\begin{center} 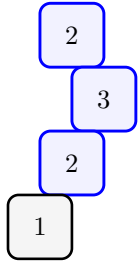
\begin{figure}[H] \centering \begin{tikzpicture}[scale=0.85]
    \sqbl{0.5}{3};  \node at (1,2.5)   {\footnotesize $2$};
    \sqbl{1}{2};    \node at (1.5,1.5) {\footnotesize $3$};
    \sqbl{0.5}{1};  \node at (1,0.5)   {\footnotesize $2$};
    \sq{0}{0};      \node at (0.5,-0.5){\footnotesize $1$};
\end{tikzpicture} \caption{The heap of a non-FC element in $W(A_7)$.}\label{fig:cylheapsex3}
\end{figure} \end{center}
\end{enumerate}
\end{example}
    
    We let $\hat{H}(w)$ represent the equivalence class of CFC heaps cyclically equivalent to $H(w)$, which we visualize by wrapping representatives on a cylinder.
    We call $\hat{H}(w)$ a \emph{cylindrical heap}. Note that our notion of a cylindrical heap coincides with the definition of a cylindric transformation of a heap given in~\cite{Petreolle2014}.

\begin{example}
Let $w \in \CFC(A_4)$ have reduced expression $1324$. Then $\hat{H}(w)$ can be represented by the cylindrical heap shown in Figure~\ref{fig:cylheap1324}, where we identify the edges of the north and south faces so that the arrows match direction. The elements of $\hat{H}(w)$ are shown in Figure~\ref{fig:cylheapelements}.
\end{example}

\begin{center}
\begin{figure}[H]
\centering
\begin{tikzpicture}
\draw[line width=1.5pt,->] (-0.5,1)--(2.75,1); \draw[line width=1.5pt,->] (-0.5,-1)--(2.75,-1);
    \sq{0}{1};   \node at (0.5,0.5) {\footnotesize $1$};
    \sq{1}{1};   \node at (1.5,0.5) {\footnotesize $3$};
    \sq{0.5}{0}; \node at (1,-0.5)  {\footnotesize $2$};
    \sq{1.5}{0}; \node at (2,-0.5)  {\footnotesize $4$};
\end{tikzpicture}
\caption{The cylindrical heap for a CFC element in $W(A_4)$.}\label{fig:cylheap1324}
\end{figure}
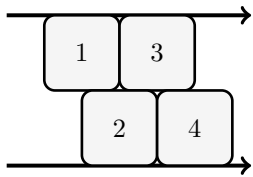
\end{center}

\begin{center} \begin{figure}[htpb] \centering
\begin{tabular}{m{2cm} m{0.4cm} m{2cm} m{0.4cm} m{2cm} m{0.4cm} m{2cm} m{0.4cm}}
\begin{tikzpicture}[scale=0.85]
    \sq{0}{1};   \node at (0.5,0.5) {\footnotesize $1$};
    \sq{1}{1};   \node at (1.5,0.5) {\footnotesize $3$};
    \sq{0.5}{0}; \node at (1,-0.5)  {\footnotesize $2$};
    \sq{1.5}{0}; \node at (2,-0.5)  {\footnotesize $4$};
\end{tikzpicture} & , &
\begin{tikzpicture}[scale=0.85]
    \sq{0}{1};   \node at (0.5,0.5) {\footnotesize $1$};
    \sq{1}{-1};  \node at (1.5,-1.5){\footnotesize $3$};
    \sq{0.5}{0}; \node at (1,-0.5)  {\footnotesize $2$};
    \sq{1.5}{0}; \node at (2,-0.5)  {\footnotesize $4$};
\end{tikzpicture} & , &
\begin{tikzpicture}[scale=0.85]
    \sq{0}{-1};  \node at (0.5,-1.5){\footnotesize $1$};
    \sq{1}{-1};  \node at (1.5,-1.5){\footnotesize $3$};
    \sq{0.5}{0}; \node at (1,-0.5)  {\footnotesize $2$};
    \sq{1.5}{0}; \node at (2,-0.5)  {\footnotesize $4$};
\end{tikzpicture} & , &
\begin{tikzpicture}[scale=0.85]
    \sq{0}{-1};  \node at (0.5,-1.5){\footnotesize $1$};
    \sq{1}{-1};  \node at (1.5,-1.5){\footnotesize $3$};
    \sq{0.5}{0}; \node at (1,-0.5)  {\footnotesize $2$};
    \sq{1.5}{-2};\node at (2,-2.5)  {\footnotesize $4$};
\end{tikzpicture} & , \\ &&&&&&& \\
\begin{tikzpicture}[scale=0.85]
    \sq{0}{2};   \node at (0.5,1.5) {\footnotesize $1$};
    \sq{1}{0};   \node at (1.5,-0.5){\footnotesize $3$};
    \sq{0.5}{1}; \node at (1,0.5)   {\footnotesize $2$};
    \sq{1.5}{-1};\node at (2,-1.5)  {\footnotesize $4$};
\end{tikzpicture} & , &
\begin{tikzpicture}[scale=0.85]
    \sq{1.5}{3}; \node at (2,2.5)   {\footnotesize $4$};
    \sq{0}{2};   \node at (0.5,1.5) {\footnotesize $1$};
    \sq{1}{2};   \node at (1.5,1.5) {\footnotesize $3$};
    \sq{0.5}{1}; \node at (1,0.5)   {\footnotesize $2$};
\end{tikzpicture} & , &
\begin{tikzpicture}[scale=0.85]
    \sq{1}{3};   \node at (1.5,2.5) {\footnotesize $3$};
    \sq{0.5}{2}; \node at (1,1.5)   {\footnotesize $2$};
    \sq{1.5}{2}; \node at (2,1.5)   {\footnotesize $4$};
    \sq{0}{1};   \node at (0.5,0.5) {\footnotesize $1$};
\end{tikzpicture} & , &
\begin{tikzpicture}[scale=0.85]
    \sq{0}{1};   \node at (0.5,0.5) {\footnotesize $1$};
    \sq{0.5}{2}; \node at (1,1.5)   {\footnotesize $2$};
    \sq{1}{3};   \node at (1.5,2.5) {\footnotesize $3$};
    \sq{1.5}{4}; \node at (2,3.5)   {\footnotesize $4$};
\end{tikzpicture} &
\end{tabular}
\caption{The elements of the equivalence class of CFC heaps cyclically equivalent the heap of some $w \in \CFC(A_4)$.}\label{fig:cylheapelements}
\end{figure}
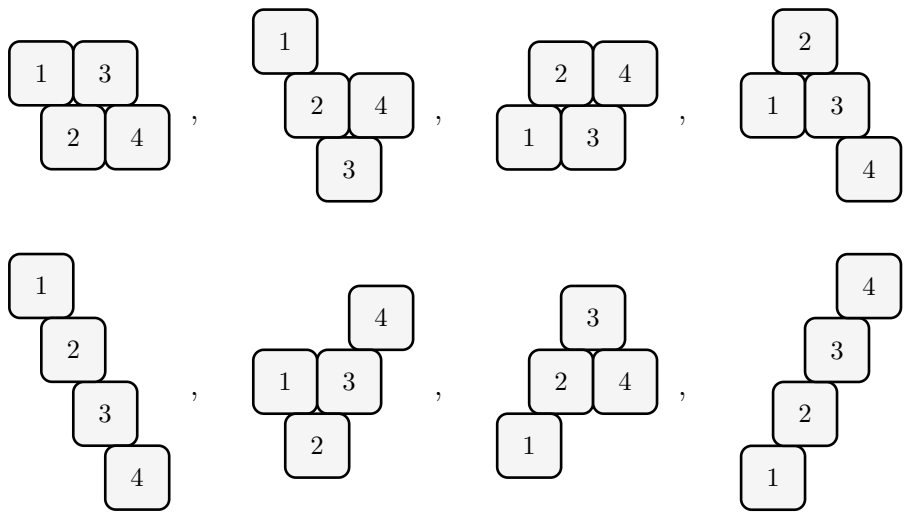
\end{center}

\begin{remark}\label{rem:CFCcylinderwrap}
Even though we lose the underlying poset structure when we wrap a heap on a cylinder, a convex subheap retains its natural meaning on the cylinder.
    An element $w \in W(A_n)$ is CFC if and only if the cylindrical heap of $w$ does not contain any representatives having any convex subheaps shown in Figure~\ref{fig:convexsubheapsnotinCFC}.
    
\begin{center} 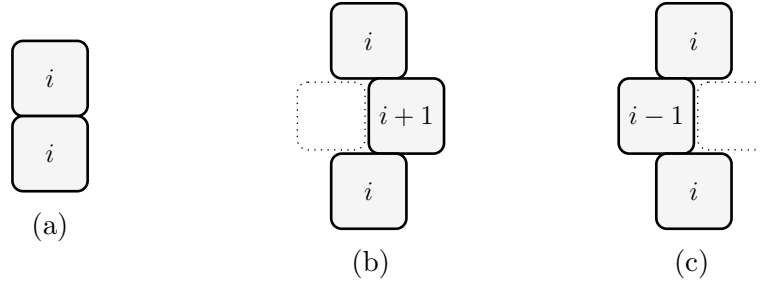
\begin{figure}[htpb] \centering
\begin{subfigure}{0.25\textwidth} \centering
\begin{tikzpicture}
    \sq{0}{2};    \node at (0.5,1.5) {\scalebox{0.85}{$i$}};
    \sq{0}{1};    \node at (0.5,0.5) {\scalebox{0.85}{$i$}};
\end{tikzpicture}
\caption{}\label{}
\end{subfigure}
\begin{subfigure}{0.25\textwidth} \centering
\begin{tikzpicture}
    \sq{0}{3};    \node at (0.5,2.5) {\scalebox{0.85}{$i$}};
    \sq{0.5}{2};  \node at (1,1.5)   {\scalebox{0.85}{$i+1$}};
    \sq{0}{1};    \node at (0.5,0.5) {\scalebox{0.85}{$i$}};
    \bsq{-0.5}{2};
\end{tikzpicture}
\caption{}\label{}
\end{subfigure}
\begin{subfigure}{0.25\textwidth} \centering
\begin{tikzpicture}
    \sq{0}{3};    \node at (0.5,2.5) {\scalebox{0.85}{$i$}};
    \sq{-0.5}{2}; \node at (0,1.5)   {\scalebox{0.85}{$i-1$}};
    \sq{0}{1};    \node at (0.5,0.5) {\scalebox{0.85}{$i$}};
    \bsq{0.5}{2};
\end{tikzpicture}
\caption{}\label{}
\end{subfigure}
\caption{The convex subheaps not allowed in the heaps for CFC elements.}\label{fig:convexsubheapsnotinCFC}
\end{figure} \end{center}
\end{remark}

\begin{example}\label{ex:A4boxes} Consider the Coxeter group of type $A_4$. Recall that generators appear at most once in CFC elements, by Proposition~\ref{prop:CFCiffatmostonce}.
    The collection of boxes in Figure~\ref{fig:A4boxes} contain all reduced expressions for CFC elements in $W(A_4)$.
    The reduced expressions are grouped into boxes that contain reduced expressions for CFC elements that differ by commutations and cyclic shifts. We clearly have no opportunity for braid moves because we only consider CFC elements.
    If two reduced expressions are listed in the same column in a box, then they are reduced expressions for the same CFC element. Alternatively, a column in a particular box corresponds to a commutation class of reduced expressions.
    
    The boxes are colored based on conjugacy. That is, if two reduced expressions are in boxes of the same color (or the same box), then the corresponding CFC elements are conjugate.
    If two reduced expressions are in the same box, then the corresponding CFC elements differ by a sequence of cyclic shifts, i.e., the reduced expressions look the same, up to commutation, when written in a circle.
    
    Note that all of the CFC elements in the \textcolor{blue}{blue} box are conjugate by Theorem \ref{thm:e2} since they are Coxeter elements.
    Also, the conjugacy class for $123$ is the set of all CFC elements, i.e., columns, in the \textcolor{magenta}{pink} boxes. Its conjugacy class is partitioned into two subsets, or \emph{cyclic classes}, each of which corresponds to a cylindrical heap.
    That is, the heaps of all the reduced expressions in the box with $123$ are cyclically equivalent, and so the cylindrical heap for this box is shown in Figure~\ref{fig:cylheap123}.
    The heaps of all the reduced expressions in the box with $234$ yield the cylindrical heap shown in Figure~\ref{fig:cylheap234}.

\begin{center} 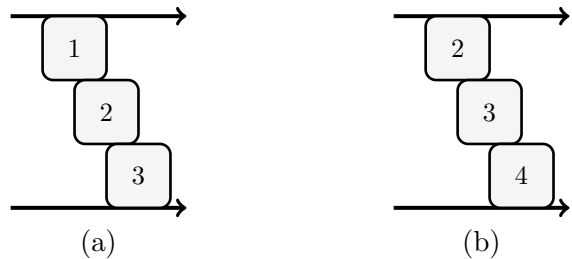
\begin{figure}[H] \centering
\begin{subfigure}{0.3\textwidth} \centering
\begin{tikzpicture}[scale=0.85]
\draw[line width=1.5pt,->] (-0.5,2)--(2.25,2); \draw[line width=1.5pt,->] (-0.5,-1)--(2.25,-1);
    \sq{0}{2};   \node at (0.5,1.5)  {\footnotesize $1$};
    \sq{0.5}{1}; \node at (1,0.5)    {\footnotesize $2$};
    \sq{1}{0};   \node at (1.5,-0.5) {\footnotesize $3$};
\end{tikzpicture}
\caption{}\label{fig:cylheap123}
\end{subfigure}
\begin{subfigure}{0.3\textwidth} \centering
\begin{tikzpicture}[scale=0.85]
\draw[line width=1.5pt,->] (-0.5,2)--(2.25,2); \draw[line width=1.5pt,->] (-0.5,-1)--(2.25,-1);
    \sq{0}{2};   \node at (0.5,1.5)  {\footnotesize $2$};
    \sq{0.5}{1}; \node at (1,0.5)    {\footnotesize $3$};
    \sq{1}{0};   \node at (1.5,-0.5) {\footnotesize $4$};
\end{tikzpicture}
\caption{}\label{fig:cylheap234}
\end{subfigure}
\caption{The cylindrical heaps corresponding to cyclic classes of CFC elements of $W(A_4)$.}\label{fig:cylheaps123and234}
\end{figure} \end{center}
    
    We are able to move between the cyclic classes since the elements are all conjugate. It is disappointing that we cannot say everything conjugate to $123$ is conjugate by cyclic shifts (a generalization of Theorem~\ref{thm:e2}). Thus, we need another way to move between the cyclic classes.
    It turns out that we can just ``slide" the cylindrical heaps to move from the cyclic class containing $123$ to the cyclic class containing $234$. We will discuss this in more detail in Section~\ref{sec:chunks}.

\begin{center} \begin{figure}[ht!]
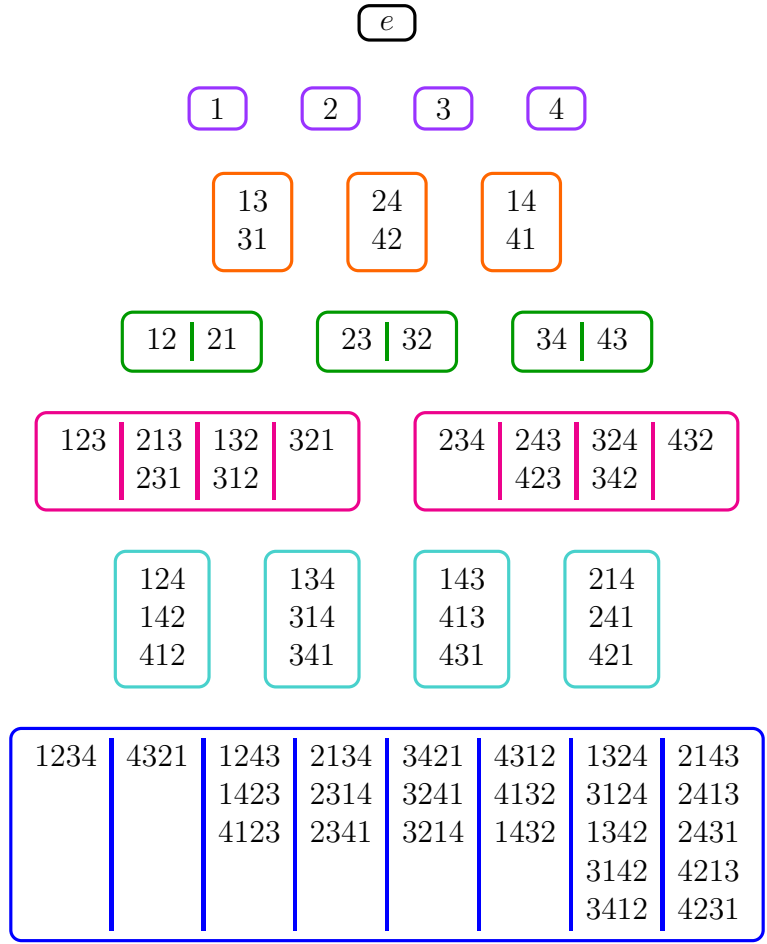
 \centering \begin{tabular}{c}
$$\boxed{~e~}$$ \\ \\
$\ds{\begin{array}{ccccccc}
\boxp{~1~} && \boxp{~2~} && \boxp{~3~} && \boxp{~4~}
\end{array}}$ \\ \\
$\ds{\begin{array}{ccccc}
\boxor{\begin{array}{c} 13 \\ 31 \end{array}} &&
\boxor{\begin{array}{c} 24 \\ 42 \end{array}} &&
\boxor{\begin{array}{c} 14 \\ 41 \end{array}}
\end{array}}$ \\ \\
${\ds 
\setlength{\arrayrulewidth}{1.25pt}
\begin{array}{ccccc}
\boxgr{
\begin{array}{c|c}\arrayrulecolor{ggreen}
12 & 21 \end{array}} 
&&
\boxgr{
\begin{array}{c|c}\arrayrulecolor{ggreen}
23 & 32 \end{array}}
&&
\boxgr{ 
\begin{array}{c|c}\arrayrulecolor{ggreen}
34 & 43 \end{array}}
\end{array}}$ \\ \\
${\ds
\setlength{\arrayrulewidth}{1.25pt}
\begin{array}{ccc}
\boxm{
\begin{array}{c|c|c|c}\arrayrulecolor{magenta}
123 & 213 & 132 & 321 \\
    & 231 & 312 &
\end{array}} 
& &
\boxm{
\begin{array}{c|c|c|c}\arrayrulecolor{magenta}
234 & 243 & 324 & 432 \\
    & 423 & 342 &
\end{array}}
\end{array}}$ \\ \\
${\ds
\setlength{\arrayrulewidth}{1.25pt}
\begin{array}{ccccccc}
\boxt{\begin{array}{c} 124 \\ 142 \\ 412 \end{array}} &&
\boxt{\begin{array}{c} 134 \\ 314 \\ 341 \end{array}} &&
\boxt{\begin{array}{c} 143 \\ 413 \\ 431 \end{array}} &&
\boxt{\begin{array}{c} 214 \\ 241 \\ 421 \end{array}}
\end{array}}$ \\ \\
$$\boxbl{
\setlength{\arrayrulewidth}{1.25pt}
\begin{array}{l|l|l|l|l|l|l|l}\arrayrulecolor{darkblue}
    1234 & 4321 & 1243 & 2134 & 3421 & 4312 & 1324 & 2143 \\
         &      & 1423 & 2314 & 3241 & 4132 & 3124 & 2413 \\
         &      & 4123 & 2341 & 3214 & 1432 & 1342 & 2431 \\
         &      &      &      &      &      & 3142 & 4213 \\
         &      &      &      &      &      & 3412 & 4231
\end{array}}$$
\end{tabular}
\caption{The conjugacy, cyclic, and commutation classes of CFC elements in $W(A_4)$.} \label{fig:A4boxes} 
\end{figure} \end{center}
\end{example}

    In~\cite{Stembridge1996}, Stembridge classified the Coxeter groups that contain finitely many FC elements (Theorem~\ref{thm:FCfinite}). Similarly, the \emph{CFC-finite groups} can be defined as the Coxeter groups that contain only finitely many CFC elements.

\begin{theorem}[Boothby, et al.,~\cite{Boothby2012}] \label{thm:CFCfinite}
    The irreducible CFC-finite Coxeter groups are $A_n$  with $n\geq 1$, $B_n$ with $n\geq 2$ , $D_n$ with $n\geq 4$, $E_n$ with $n\geq 6$, $F_n$ with $n\geq 4$, $H_n$ with $n\geq 3$, and $I_2(m)$ with $5 \leq m < \infty$. Thus, a Coxeter group is CFC-finite if and only if it is FC-finite. The graphs of FC- and CFC-finite Coxeter groups are shown in Figure~\ref{fig:coxgraphs}. \qed
\end{theorem}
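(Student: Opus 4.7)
The proof splits into two directions. The forward implication—FC-finite implies CFC-finite—is immediate: by Definition~\ref{def:CFC}, taking the trivial cyclic shift of any reduced expression for a CFC element yields a reduced expression for an FC element, so $\CFC(W)\subseteq\FC(W)$, and hence $|\FC(W)|<\infty$ forces $|\CFC(W)|<\infty$. This holds in every Coxeter group, not only the irreducible ones.

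For the converse, the plan is to produce, in each irreducible Coxeter group $W$ whose graph falls outside Stembridge's list in Theorem~\ref{thm:FCfinite}, an infinite family of CFC elements built from powers of a Coxeter element. By Remark~\ref{rem:CoxCFC}, every Coxeter element is CFC; and by Speyer's theorem (Theorem~\ref{thm:speyer}), a torsion-free Coxeter element is logarithmic, so its powers $c,c^2,c^3,\ldots$ have pairwise distinct lengths $k\cdot\ell(c)$ and therefore are pairwise distinct group elements. To supply such a torsion-free $c$, I would invoke the classification: every Coxeter diagram outside the FC-finite list contains, as a full subgraph, the diagram of an infinite irreducible Coxeter group (for instance an affine subdiagram $\tilde{A}_n$, $\tilde{D}_n$, or $\tilde{E}_n$, or an edge labeled $\infty$). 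Taking $c$ to be a Coxeter element of the corresponding standard parabolic subgroup gives $\supp(c)$ irreducible of infinite type, so $c$ is torsion-free when viewed in $W$.

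The main technical obstacle is to show that every power $c^k$ is genuinely CFC, not merely distinct. I would argue via heaps: a reduced expression for $c^k$ is the concatenation $\w^k$ of a fixed reduced expression $\w$ for $c$, and any cyclic shift of $\w^k$ has the form $\w_2\w^{k-1}\w_1$ for some decomposition $\w=\w_1\w_2$. Wrapping on a cylinder as in Remark~\ref{rem:CFCcylinderwrap}, the heap of $c^k$ is visibly $k$ copies of $H(c)$ stacked vertically. Any forbidden convex subheap of the sort listed in Remark~\ref{rem:CFCcylinderwrap} that appears in this stack must, by the fact that each generator occurs exactly once per copy of $\w$, be confined to at most two adjacent copies; a careful seam analysis then reduces the forbidden pattern back to one already present in a single copy of $H(c)$, contradicting that $c$ is itself CFC. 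This seam analysis—verifying that repeating a Coxeter element on the cylinder never introduces a fresh braid opportunity—is the heart of the argument, and it is the step where one would proceed case by case through the shape of the underlying Coxeter graph.
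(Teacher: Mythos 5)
First, a remark on context: the paper does not actually prove this theorem --- it is quoted from Boothby et al.\ with the proof omitted --- so your argument has to stand on its own. Your forward direction is fine: $\CFC(W)\subseteq\FC(W)$ by taking the trivial cyclic shift, so FC-finite implies CFC-finite. The distinctness part of your converse is also sound: by Theorem~\ref{thm:speyer} a torsion-free Coxeter element is logarithmic, so its powers are pairwise distinct.

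The genuine gap sits exactly in the step you defer to a ``seam analysis'': powers of a Coxeter element are \emph{not} CFC in general --- indeed $c^2$ need not even be FC, and the seam is precisely where the braid appears. Suppose the Coxeter graph has a leaf $s$ whose unique neighbour $t$ satisfies $m(s,t)=3$. In $H(c^2)$ the two occurrences of $s$ have exactly one occurrence of $t$ between them in the heap order (each copy of $c$ contains $t$ once, and exactly one of those two occurrences separates the two copies of $s$), and nothing else lies between them since $s$ commutes with every other generator. Hence $s\,t\,s$ is a convex chain in $H(c^2)$, so by Proposition~\ref{prop:convexsubheapiff} and Theorem~\ref{thm:stem} the element $c^2$ is not FC, hence not CFC. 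A concrete instance: take the infinite irreducible group whose graph is a star with centre $t$ and four leaves $s_1,\dots,s_4$, all bonds of strength $3$ (affine $\tilde{D}_4$, which is not FC-finite). For $c=s_1s_2s_3s_4t$ one has
$$c^2 = s_1s_2s_3s_4\,t\,s_1s_2s_3s_4\,t = s_2s_3s_4\,(s_1ts_1)\,s_2s_3s_4\,t,$$
exhibiting the forbidden $\gen{s_1t}_3$ subword; and the argument above shows no other choice of Coxeter element avoids this. So your reduction of a forbidden pattern ``back to one already present in a single copy of $H(c)$'' cannot go through: repeating a Coxeter element on the cylinder genuinely creates new braid opportunities whenever the graph has a strength-$3$ leaf, which covers most of the graphs you need (e.g.\ all the affine $\tilde{B}$, $\tilde{D}$, $\tilde{E}$ diagrams). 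Your construction survives only for graphs with no such leaf, such as cycles and the infinite dihedral group. The actual proof in Boothby et al.\ instead works through Stembridge's minimal non-FC-finite graphs and exhibits, for each, a tailored infinite family of CFC elements rather than powers of a Coxeter element.
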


\section{Pattern avoidance for CFC elements}\label{sec:Sn}
    In this section, $W$ refers to the Coxeter group of type $A_n$. Recall that $W$ is isomorphic to the symmetric group $S_{n+1}$ via the mapping that sends $s_i$ to the adjacent transposition $(i~i+1)$.
    Also recall that every permutation can be written uniquely (up to commutation) as a product of disjoint cycles.
    We will not make a distinction between an element from $W(A_n)$ and the corresponding permutation in $S_{n+1}$.

    As a convention, we will multiply (compose) permutations right to left.
    Recall that if $w \in S_n$, then $[w(1)~w(2) \cdots w(n)]$ is the \emph{one-line notation} corresponding to $w$. Note the use of brackets.

\begin{example} Let $W$ be the Coxeter graph of type $A_4$. Let $w \in W$ have reduced expression $\w = 12342$.
    Then the corresponding permutation in $S_5$ is $$(12)(23)(34)(45)(23) = (1245).$$
    Then, in one-line notation, we have $$(1245) = [24351]$$ since 1 is sent to 2, 2 is sent to 4, 3 is sent to itself, 4 is sent to 5, and 5 is sent back to 1.
\end{example}

    We can depict the one-line notation of a permutation $w$ as a graph to see its shape. A \emph{permutation line graph} has line segments joining $(i,w(i))$ to $(i+1,w(i+1))$ for each $1 \leq i \leq n-1$.

\begin{example}\label{ex:linegraphs} We consider some permutation line graphs.
\begin{enumerate}[label=(\alph*),leftmargin=0.75in]
\item Consider the permutation $w = [2413]$. Then the permutation line graph is shown in Figure~\ref{fig:permlinegraphs1}.

\item Consider the permutation $w = [315462]$. Then the permutation line graph is shown in Figure~\ref{fig:permlinegraphs2}.
\end{enumerate}

\begin{center} \begin{figure}[ht!] \centering
\begin{subfigure}{0.4\textwidth} \centering \vspace{35pt}
\begin{tikzpicture}[scale=0.95] \begin{scriptsize}
\draw (0,0)--(4.5,0); \draw (0,0)--(0,4.5);
\foreach \x in {1,2,3,4} \draw[shift={(\x,0)},color=black] (0pt,2pt)--(0pt,-2pt);
\foreach \y in {1,2,3,4} \draw[shift={(0,\y)},color=black] (2pt,0pt)--(-2pt,0pt);
    \draw (1,-0.25) node {$1$};
    \draw (2,-0.25) node {$2$};
    \draw (3,-0.25) node {$3$};
    \draw (4,-0.25) node {$4$};
    \draw (-0.25,1)  node {$1$};
    \draw (-0.25,2)  node {$2$};
    \draw (-0.25,3)  node {$3$};
    \draw (-0.25,4)  node {$4$};
    \draw[fill=black] (1,2) circle (1pt);
    \draw[fill=black] (2,4) circle (1pt);
    \draw[fill=black] (3,1) circle (1pt);
    \draw[fill=black] (4,3) circle (1pt);
    \draw (1,2)--(2,4)--(3,1)--(4,3);
\end{scriptsize} \end{tikzpicture}
\caption{}
\label{fig:permlinegraphs1}
\end{subfigure}
\begin{subfigure}{0.4\textwidth} \centering
\begin{tikzpicture}[scale=0.85] \begin{scriptsize}
\draw (0,0)--(6.5,0); \draw (0,0)--(0,6.5);
\foreach \x in {1,2,3,4,5,6} \draw[shift={(\x,0)},color=black] (0pt,2pt)--(0pt,-2pt);
\foreach \y in {1,2,3,4,5,6} \draw[shift={(0,\y)},color=black] (2pt,0pt)--(-2pt,0pt);
    \draw (1,-0.25) node {$1$};
    \draw (2,-0.25) node {$2$};
    \draw (3,-0.25) node {$3$};
    \draw (4,-0.25) node {$4$};
    \draw (5,-0.25) node {$5$};
    \draw (6,-0.25) node {$6$};
    \draw (-0.25,1) node {$1$};
    \draw (-0.25,2) node {$2$};
    \draw (-0.25,3) node {$3$};
    \draw (-0.25,4) node {$4$};
    \draw (-0.25,5) node {$5$};
    \draw (-0.25,6) node {$6$};
    \draw[fill=black]   (1,3) circle (1pt);
    \draw[fill=black]   (2,1) circle (1pt);
    \draw[fill=black]   (3,5) circle (1pt);
    \draw[fill=black]   (4,4) circle (1pt);
    \draw[fill=black]   (5,6) circle (1pt);
    \draw[fill=black]   (6,2) circle (1pt);
    \draw (1,3)--(2,1)--(3,5)--(4,4)--(5,6)--(6,2);
\end{scriptsize} \end{tikzpicture}
\caption{}
\label{fig:permlinegraphs2}
\end{subfigure}
\caption{Permutation line graphs.}\label{fig:permlinegraphsexample}
\end{figure}
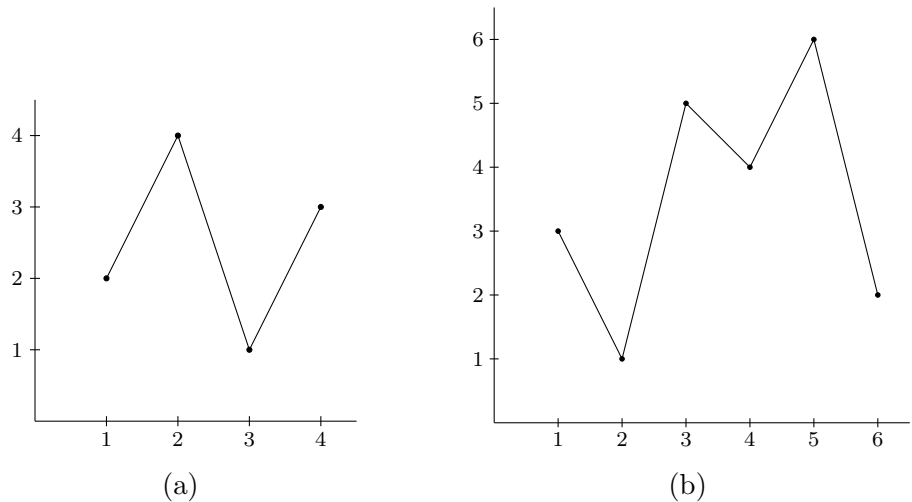
\end{center}
\end{example}

    Using the notion of \emph{pattern avoidance}, we can determine whether a permutation is FC or CFC by inspecting its one-line notation.
    If $w \in W(A_n)$, then $w$ avoids the pattern $321$ if there is no subset $\{i,j,k\} \subseteq \{1,\ldots,n+1\}$ with $i < j < k$ and $w(k) < w(j) < w(i)$.
    Similarly, a permutation $w$ avoids the pattern $3412$ if there is no subset $\{i,j,k,\ell\} \subseteq \{1,\ldots,n+1\}$ with $i < j < k < \ell$ and $w(k) < w(\ell) < w(i) < w(j)$.
    
    Note that the elements that constitute the 321 and 3412 patterns need not be consecutive. To have a 321 pattern, the one-line notation must have a strictly descending subsequence of three elements.
    Portions of the permutation line graphs corresponding to the patterns 321 and 3412 are shown in Figure~\ref{fig:patternmountains}.

\begin{center} 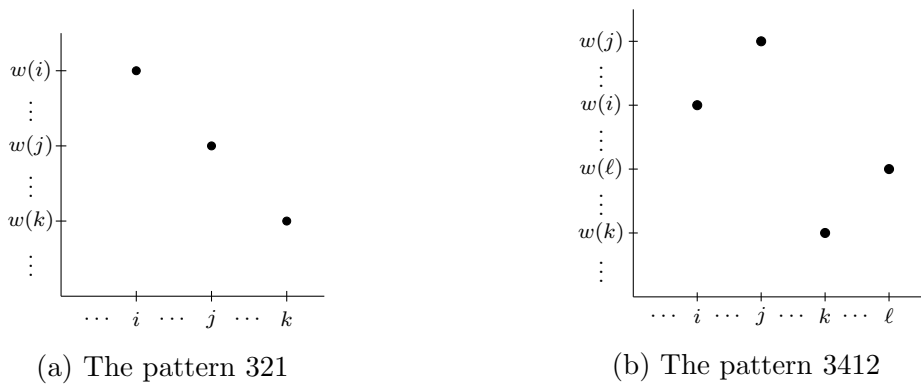
\begin{figure}[h!]
\begin{subfigure}{3in} \centering \vspace{10pt}
\begin{tikzpicture}
\draw (0,0)--(3.5,0); \draw (0,0)--(0,3.5);
\foreach \x in {1,2,3} \draw[shift={(\x,0)},color=black] (0pt,2pt)--(0pt,-2pt);
\foreach \y in {1,2,3} \draw[shift={(0,\y)},color=black] (2pt,0pt)--(-2pt,0pt);
\begin{scriptsize}
    \draw (1,-0.3) node {$i$};
    \draw (2,-0.3) node {$j$};
    \draw (3,-0.3) node {$k$};
    \draw (-0.4,1) node {$w(k)$};
    \draw (-0.4,2) node {$w(j)$};
    \draw (-0.4,3) node {$w(i)$};
    \draw[fill=black] (1,3) circle (1.5pt);
    \draw[fill=black] (2,2) circle (1.5pt);
    \draw[fill=black] (3,1) circle (1.5pt);
    \draw (0.5,-0.3) node {$\cdots$};
    \draw (1.5,-0.3) node {$\cdots$};
    \draw (2.5,-0.3) node {$\cdots$};
    \draw (-0.4,0.5) node {$\vdots$};
    \draw (-0.4,1.55) node {$\vdots$};
    \draw (-0.4,2.55) node {$\vdots$};
\end{scriptsize}\end{tikzpicture}
\caption{The pattern 321}\label{fig:321}
\end{subfigure}
\begin{subfigure}{3in} \centering
\begin{tikzpicture}[scale=0.85]
\draw (0,0)--(4.5,0); \draw (0,0)--(0,4.5);
\foreach \x in {1,2,3,4} \draw[shift={(\x,0)},color=black] (0pt,2pt)--(0pt,-2pt);
\foreach \y in {1,2,3,4} \draw[shift={(0,\y)},color=black] (2pt,0pt)--(-2pt,0pt);
\begin{scriptsize}
    \draw (1,-0.3) node {$i$};
    \draw (2,-0.3) node {$j$};
    \draw (3,-0.3) node {$k$};
    \draw (4,-0.3) node {$\ell$};
    \draw (-0.5,1) node {$w(k)$};
    \draw (-0.5,2) node {$w(\ell)$};
    \draw (-0.5,3) node {$w(i)$};
    \draw (-0.5,4) node {$w(j)$};
    \draw[fill=black] (1,3) circle (2pt);
    \draw[fill=black] (2,4) circle (2pt);
    \draw[fill=black] (3,1) circle (2pt);
    \draw[fill=black] (4,2) circle (2pt);
    \draw (0.5,-0.3) node {$\cdots$};
    \draw (1.5,-0.3) node {$\cdots$};
    \draw (2.5,-0.3) node {$\cdots$};
    \draw (3.5,-0.3) node {$\cdots$};
    \draw (-0.5,0.5) node {$\vdots$};
    \draw (-0.5,1.55) node {$\vdots$};
    \draw (-0.5,2.55) node {$\vdots$};
    \draw (-0.5,3.55) node {$\vdots$};
\end{scriptsize} \end{tikzpicture}
\caption{The pattern 3412}\label{fig:3412}
\end{subfigure}
\caption{The permutation line graphs of the 321 and 3412 patterns.} \label{fig:patternmountains}
\end{figure} \end{center}

\begin{example}\label{ex:patterns} Let $w \in W(A_5)$ have reduced expression $\w = 234513$. Then $w$ corresponds to $$(23)(34)(45)(56)(12)(34) = (13562)$$ in $S_6$. The one-line notation for $w$ is $[31\textcolor{ggreen}{54}6\textcolor{ggreen}{2}]$. 
    There is a 321 pattern in the one-line notation, highlighted in \textcolor{ggreen}{green}, but there is no 3412 pattern. The permutation line graph for $[315462]$ is shown in Figure~\ref{fig:permlinegraphs2}.
\end{example}

\begin{proposition}[Billey,~\cite{Billey2007}]\label{prop:321FC} An element $w \in W(A_n)$ is FC if and only if $w$ is 321-avoiding. \qed
\end{proposition}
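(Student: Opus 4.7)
The plan is to prove both directions via the criterion from Theorem~\ref{thm:stem}: since in type $A_n$ the only nontrivial braid relation is $s_is_{i+1}s_i = s_{i+1}s_is_{i+1}$, the element $w$ is FC if and only if no reduced expression for $w$ contains a subword of the form $s_is_{i+1}s_i$. I would translate between reduced expressions and the wiring diagram model: a reduced expression $s_{i_1}\cdots s_{i_k}$ is drawn as $n+1$ horizontal strands with $k$ crossings, and the standard fact is that strands $a, b$ with $a < b$ cross exactly once in a reduced diagram if and only if $(a,b)$ is an inversion of $w$, i.e., $w(a) > w(b)$.

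For the direction ``not FC $\Rightarrow$ has a 321 pattern'': suppose $\w = \u \, s_is_{i+1}s_i \, \v$ is reduced. Just before the subword, the three strands at positions $i, i+1, i+2$ carry some starting labels $p, q, r$; sort these as $a < b < c$. Tracking the three crossings in the subword shows that strands $a, b, c$ pairwise cross within $s_is_{i+1}s_i$, so each of $(a,b), (a,c), (b,c)$ is an inversion of $w$; hence $w(a) > w(b) > w(c)$ and $(a,b,c)$ is a 321 pattern.

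For the direction ``has a 321 pattern $\Rightarrow$ not FC'': given $a < b < c$ with $w(a) > w(b) > w(c)$, I would choose such a triple minimizing $c - a$. In any reduced wiring diagram for $w$, the strands $a, b, c$ pairwise cross, producing three ``triangle'' crossings. The aim is to apply commutation and braid moves to the corresponding reduced expression (justified by Matsumoto's Theorem~\ref{thm:matsumoto}) until those three crossings occupy three consecutive positions in the expression; once adjacent, they must read as $s_js_{j+1}s_j$ for whatever $j$ the triangle occupies, which by Theorem~\ref{thm:stem} shows $w$ is not FC.

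The main obstacle lies in this second direction---specifically in arranging the three triangle crossings to be adjacent. The plan is to exploit the minimality of $c - a$: any strand $d$ with $a < d < c$ and $d \neq b$ that crosses all of $a, b, c$ would, paired with two of $a, b, c$, witness a 321 pattern of strictly smaller span, contradicting minimality. A careful case analysis---on how each ``intruding'' strand crosses subsets of $\{a, b, c\}$, and how its corresponding generators can be commuted past the triangle region---should show that no other crossings need interleave the triangle, so a suitable sequence of commutations brings the three triangle crossings adjacent and the required $s_j s_{j+1} s_j$ subword emerges.
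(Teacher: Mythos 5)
The paper does not actually prove this proposition---it is quoted from Billey et al.\ with a \qed---so there is no internal proof to compare against; I can only assess your argument on its own terms. Your first direction is sound: in type $A_n$ Theorem~\ref{thm:stem} reduces ``not FC'' to the existence of a reduced expression with a factor $s_is_{i+1}s_i$ or $s_{i+1}s_is_{i+1}$, and tracking the three strands through such a factor shows they are pairwise reversed; since a reduced wiring diagram has each pair of strands crossing at most once, all three pairs are inversions, which is exactly a $321$ pattern. Your observation that three consecutive crossings of three mutually crossing strands are forced to read $s_js_{j+1}s_j$ or $s_{j+1}s_js_{j+1}$ is also correct.

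The gap is in the converse, and it is exactly where you flagged it. Minimizing $c-a$ only lets you exclude a fourth strand $d$ with $a<d<c$ that crosses \emph{all three} of $a,b,c$; a strand crossing exactly \emph{two} of them is not excluded and can genuinely wedge itself into the triangle. Concretely, take $w=[4231]\in W(A_3)$ with reduced expression $12321$. The pattern at positions $(1,2,4)$ (values $4>2>1$) has minimal span $c-a=3$. Reading the wiring diagram of $12321$, the five letters are the crossings $1{\times}2$, $1{\times}3$, $1{\times}4$, $3{\times}4$, $2{\times}4$ in that order, so the triangle for $\{1,2,4\}$ occupies letters $1,3,5$, while letter $2$ shares strand $1$ with letters $1$ and $3$, and letter $4$ shares strand $4$ with letters $3$ and $5$. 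Crossings sharing a strand can never be reordered by commutations, so this triangle can never be made consecutive: strand $3$ crosses $1$ and $4$ but not $2$, and your case analysis as stated would conclude, falsely, that it can be commuted out of the way. (The other minimal-span pattern $(1,3,4)$ does work here---its triangle is the factor $232$---which shows the theorem survives but that the proof must make a finer choice of pattern, e.g.\ replacing $b$ by any intruding strand $d$ that crosses both $a$ and $c$ and iterating, or arguing by induction on $\ell(w)$ as in the original reference, rather than relying on minimality of $c-a$ alone.)
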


    As a consequence of Proposition~\ref{prop:321FC}, the element from Example~\ref{ex:patterns} is not FC.
    
    The following proposition about pattern avoidance is from~\cite{Boothby2012}.
    
\begin{proposition}[Boothby, et al.,~\cite{Boothby2012}]\label{prop:patterns} An element $w \in W(A_n)$ is CFC if and only if $w$ is $321$- and $3412$-avoiding. \qed
\end{proposition}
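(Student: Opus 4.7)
By Proposition~\ref{prop:CFCiffatmostonce}, $w \in W(A_n)$ is CFC if and only if $w$ is FC and every generator in $\supp(w)$ appears exactly once in any reduced expression for $w$. By Proposition~\ref{prop:321FC}, the FC condition is equivalent to $w$ being $321$-avoiding. So the proposition reduces to the claim that, among $321$-avoiding elements of $W(A_n)$, the condition ``every generator of $\supp(w)$ appears at most once'' is equivalent to $3412$-avoidance. I would prove this by establishing each direction by contrapositive.

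For the direction that a repeated generator forces a $3412$ pattern, I would work with the unique heap $H(w)$. Suppose some $s_i$ occurs at least twice, and choose two occurrences that are consecutive in the chain of $s_i$-blocks. Reducedness forces a non-commuting generator to appear between them in the heap, and Proposition~\ref{prop:convexsubheap}'s forbidden convex subheaps rule out the scenarios in which only $s_{i-1}$-blocks or only $s_{i+1}$-blocks appear in the convex hull between the two $s_i$'s. Hence both an $s_{i-1}$- and an $s_{i+1}$-block must lie in this convex hull. By Proposition~\ref{prop:convexsubheapiff}, the resulting convex subheap is the heap of a subword of some reduced expression for $w$; after commutations this subword contains the pattern $s_i s_{i-1} s_{i+1} s_i$, whose product is the permutation $[3412]$ acting on the levels $i-1, i, i+1, i+2$. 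I would then verify, by tracking four strands of the wiring diagram for $w$ that pass through these four levels at the position of the sub-heap, that the original permutation $w$ contains a $3412$ pattern at the corresponding input positions.

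For the direction that a $3412$ pattern forces a repeated generator, suppose $w$ is FC and contains a $3412$ pattern at positions $i<j<k<\ell$ with values $w(k)<w(\ell)<w(i)<w(j)$. In the wiring diagram of $w$, since $w$ is $321$-avoiding, each pair of strands crosses at most once and each strand traces a monotone path between its endpoints. The four strands starting at positions $i,j,k,\ell$ must perform exactly the four crossings $\{i,k\},\{i,\ell\},\{j,k\},\{j,\ell\}$. A level-counting argument using the monotone paths pins the ``outer'' crossing $\{i,\ell\}$ and the ``inner'' crossing $\{j,k\}$ into overlapping generator-intervals; since each strand is monotone and no pair re-crosses, these two crossings must occur at a common generator $s_m$, which therefore appears at least twice in any reduced expression for $w$. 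The prototype is $[3412]=s_2 s_1 s_3 s_2$, in which both the $\{1,4\}$ and $\{2,3\}$ crossings occur at $s_2$.

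The main obstacle is the careful combinatorial bookkeeping in both directions: translating between blocks in a convex subheap, subwords of a reduced expression, and positions and values in the one-line notation of $w$. In particular, in the first direction one must verify that the ``local'' $3412$ behavior inside the convex subheap extends to a genuine $3412$ pattern in the full one-line notation of $w$, and in the second direction one must sharpen the level-interval comparisons into an actual forced coincidence of the two crossing generators.
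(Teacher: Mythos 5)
The paper does not actually prove this proposition: it is imported from~\cite{Boothby2012} and stamped with a \qed, so there is no in-paper argument to compare yours against, and your proposal must stand on its own. Your opening reduction is sound and is the right first move: granting Propositions~\ref{prop:CFCiffatmostonce} and~\ref{prop:321FC}, the statement becomes ``for a $321$-avoiding $w$, some generator repeats in a reduced expression if and only if $w$ contains a $3412$ pattern.'' That residual claim is a true theorem (it is the characterization of boolean permutations), but both of your directions have gaps, and one contains a step that is false as stated. In the second direction, the assertion that the crossings $\{i,\ell\}$ and $\{j,k\}$ ``must occur at a common generator $s_m$'' fails. Take $w = [256134]$, which is $321$-avoiding, hence FC, with reduced expression $s_1s_4s_3s_2s_5s_4s_3$; every strand of its wiring diagram is monotone, and since $w$ is FC the column of each crossing is independent of the reduced word chosen. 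For the $3412$ occurrence at positions $(i,j,k,\ell)=(2,3,4,6)$ (values $5,6,1,4$), the inner crossing $\{3,4\}$ occurs at column $3$ while the outer crossing $\{2,6\}$ occurs at column $4$: the level-intervals overlap but the generators do not coincide. The desired conclusion is still reachable --- one can show $\{j,k\}$ is the first of the four crossings and $\{i,\ell\}$ the last, and that if their columns differ then strand $i$ or strand $\ell$ must pass through the wall at the inner crossing's column in between, producing a second occurrence of that generator --- but that is a different argument from the one you give, and you would either need it or a proof that a ``good'' occurrence of the pattern can always be selected.

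In the first direction there are two further gaps. Proposition~\ref{prop:convexsubheap} only forbids a \emph{single} intervening block between two consecutive $s_i$'s; it does not by itself rule out the case where all intervening non-commuting blocks are several $s_{i+1}$'s (or several $s_{i-1}$'s). Excluding that requires an additional induction up or down the Coxeter graph between consecutive repeated occurrences, or a careful extremal choice of $i$, neither of which you supply. More seriously, the passage from ``some reduced word of $w$ has a convex-subheap factor containing $s_i$, $s_{i-1}$, $s_{i+1}$, $s_i$'' to ``the one-line notation of $w$ contains $3412$'' is precisely the hard content of the theorem: the convex hull of the two $s_i$-blocks may contain many other blocks, and pattern containment does not in general propagate from a length-additive factor of a reduced word to the whole element. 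You correctly identify this as the main obstacle, but you leave it unexecuted, so as it stands the proposal is a correct reduction plus a plausible programme with one broken step, rather than a proof.
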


\begin{example}\label{ex:permscyclic} We will now explore a few examples. \begin{enumerate}[leftmargin=0.75in, label=(\alph*)]
\item Let $W$ be the Coxeter group of type $A_3$. Then $W \cong S_4$. Let $w \in W$ have reduced expression $\w = 132$. Then $w$ is CFC since $w$ is a Coxeter element. In this case, its heap is shown in Figure~\ref{fig:cycheapex1}

\begin{center} \begin{figure}[H] \centering
\begin{tikzpicture}
    \sq{0}{1};   \node at (0.5,0.5) {$1$};
    \sq{1}{1};   \node at (1.5,0.5) {$3$};
    \sq{0.5}{0}; \node at (1,-0.5)  {$2$};
\end{tikzpicture}
\caption{The heap for a CFC element in $W(A_3)$.}\label{fig:cycheapex1}
\end{figure}
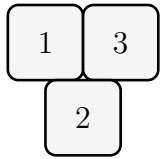 \end{center}
    We see that $w$ corresponds to the permutation $(12)(34)(23) = (1243) = [2413]$ in $S_4$ in cycle notation and one-line notation.
    The permutation line graph is shown in part (a) of Example~\ref{ex:linegraphs}.
    Since there are only four elements and is not $[3412]$ exactly, $w$ is clearly 3412-avoiding. It is also 321-avoiding because there is not a strictly decreasing subsequence of three elements in the one-line notation. We can also see this in the permutation line graph.
    These conclusions agree with Proposition~\ref{prop:patterns}.
	
\item Let $W$ be the Coxeter group of type $A_3$. Then $W \cong S_4$. Let $w \in W$ have reduced expression $\w = 3213$. Then, in cycle and one-line notations, we have that $w$ corresponds to $$(34)(23)(12)(34) = (124) = [2431]$$ in $S_4$.
    Since $431$ in the one-line notation is a 321 pattern, $[2431]$ is not 321-avoiding.
    We can see this in the permutation line graph, shown in Figure~\ref{fig:permline2431}, where the circled points correspond to the elements that constitute the 321 pattern.
    Then, by Proposition~\ref{prop:321FC}, $w$ is not FC, and hence not CFC. In this case, the heaps for $w$ are shown in Figures~\ref{fig:heapfor3213} and~\ref{fig:heapfor2321}.
    It is clear from the heaps that $w$ is not FC because each heap contains a convex subheap corresponding to the braid relation $323 = 232$.

\begin{center} 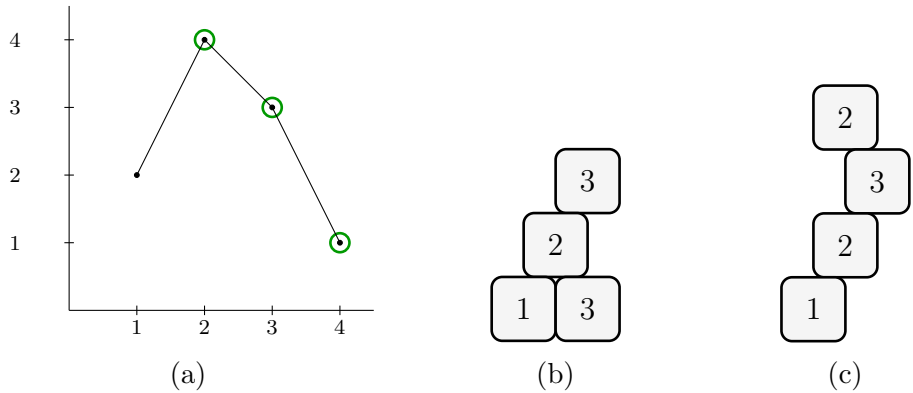
\begin{figure}[H] \centering
\begin{subfigure}{0.35\textwidth} \centering
\begin{tikzpicture}[scale=0.9]
\begin{scriptsize}
\draw (0,0)--(4.5,0); \draw (0,0)--(0,4.5);
\foreach \x in {1,2,3,4} \draw[shift={(\x,0)},color=black] (0pt,2pt)--(0pt,-2pt);
\foreach \y in {1,2,3,4} \draw[shift={(0,\y)},color=black] (2pt,0pt)--(-2pt,0pt);
    \draw (1,-0.25) node {$1$};
    \draw (2,-0.25) node {$2$};
    \draw (3,-0.25) node {$3$};
    \draw (4,-0.25) node {$4$};
    \draw (-0.8,1)  node {$1$};
    \draw (-0.8,2)  node {$2$};
    \draw (-0.8,3)  node {$3$};
    \draw (-0.8,4)  node {$4$};
    \draw[fill=black] (1,2) circle (1pt);
    \draw[fill=black] (2,4) circle (1pt); \draw[color=ggreen,line width=1pt] (2,4) circle (4pt);
    \draw[fill=black] (3,3) circle (1pt); \draw[color=ggreen,line width=1pt] (3,3) circle (4pt);
    \draw[fill=black] (4,1) circle (1pt); \draw[color=ggreen,line width=1pt] (4,1) circle (4pt);
    \draw (1,2)--(2,4)--(3,3)--(4,1);
\end{scriptsize} \end{tikzpicture}
\caption{}\label{fig:permline2431}
\end{subfigure}
\begin{subfigure}{0.225\textwidth} \centering \vspace{54pt}
\begin{tikzpicture}[scale=0.85]
    \sq{1}{2};   \node at (1.5,1.5) {$3$};
    \sq{0.5}{1}; \node at (1,0.5)   {$2$};
    \sq{0}{0};   \node at (0.5,-0.5){$1$};
    \sq{1}{0};   \node at (1.5,-0.5){$3$};
\end{tikzpicture}
\caption{}\label{fig:heapfor3213}
\end{subfigure}
\begin{subfigure}{0.225\textwidth} \centering \vspace{30pt}
\begin{tikzpicture}[scale=0.85]
    \sq{0.5}{3}; \node at (1,2.5)   {$2$};
    \sq{1}{2};   \node at (1.5,1.5) {$3$};
    \sq{0.5}{1}; \node at (1,0.5)   {$2$};
    \sq{0}{0};   \node at (0.5,-0.5){$1$};
\end{tikzpicture}
\caption{}\label{fig:heapfor2321}
\end{subfigure}
\caption{The permutation line graph and two heaps corresponding to some $w\in W(A_4)$.}\label{}
\end{figure} \end{center}

\item Let $W$ be the Coxeter group of type $A_n$ where $n$ is at least 7. Let $w \in W$ correspond to the element $[\cdots \textcolor{magenta}{5}1\textcolor{magenta}{83}2\textcolor{magenta}{4} \cdots]$ of $S_{n+1}$.
    Note that this element is not 3412-avoiding because the pink elements create a 3412 pattern. So, $w$ is not CFC.
    Moreover, $w$ is not even FC by Proposition~\ref{prop:321FC} because 832 exhibits a 321 pattern.
	
\item Let $W$ be the Coxeter group of type $A_3$. Then $W \cong S_4$. Let $w \in W$ have reduced expression $\w = 2132$. Then $w$ is FC and the heap of $w$ is shown in Figure~\ref{fig:heap2132}. Inspecting the heap makes it clear that $w$ is not CFC. Moreover, we see that the corresponding permutation is $$(23)(12)(34)(23) = (13)(24) = [3412],$$ which is obviously not 3412-avoiding but is 321-avoiding.

\begin{center} 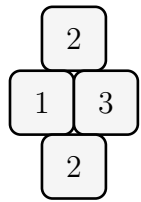
\begin{figure}[H] \centering
\begin{tikzpicture}[scale=0.85]
    \sq{0.5}{1}; \node at (1,0.5)   {$2$};
    \sq{0}{0};   \node at (0.5,-0.5){$1$};
    \sq{1}{0};   \node at (1.5,-0.5){$3$};
    \sq{0.5}{-1};\node at (1,-1.5)  {$2$};
\end{tikzpicture}
\caption{The heap of an FC element of $W(A_3)$.}\label{fig:heap2132}
\end{figure} \end{center}

\item \label{ex:cyclicshifts} Let $w \in W(A_4)$ have reduced expression $\w = 1234$. Then $w$ is FC and the heap of $w$ is shown in Figure~\ref{fig:heap1234}. Then $w$ corresponds to $(12)(23)(34)(45) = (12345)$.
    All possible sequences of cyclic shifts of $H(w)$ and their corresponding permutations in $S_5$ are shown in Figure~\ref{fig:cyclicshiftsof1234}. We shift the \textcolor{magenta}{pink} blocks to obtain the heap that follows. Since $w$ is a Coxeter element, it is CFC, and every cyclic shift of $w$ is also CFC by Remark~\ref{rem:CoxCFC}.
\begin{center} 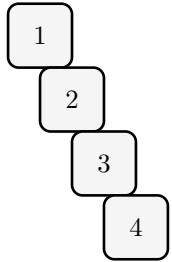
\begin{figure}[H] \centering
\begin{tikzpicture}[scale=0.85]
    \sq{0}{2};    \node at (0.5,1.5)  {\footnotesize $1$};
    \sq{0.5}{1};  \node at (1,0.5)    {\footnotesize $2$};
    \sq{1}{0};    \node at (1.5,-0.5) {\footnotesize $3$};
    \sq{1.5}{-1}; \node at (2,-1.5)   {\footnotesize $4$};
\end{tikzpicture} \caption{The heap of a CFC element of $W(A_4)$.}\label{fig:heap1234}
\end{figure} \end{center}
\end{enumerate}
\end{example}

\begin{center} \begin{figure}[h!] \centering
\begin{tabular}{@{}m{2.7cm} @{}m{0.9cm} @{}m{2.3cm} @{}m{2.7cm} @{}m{0.9cm} @{}m{1cm}}
\begin{tikzpicture}[scale=0.85]
    \sq{0}{0};    \node at (0.5,-0.5) {\footnotesize $1$};
    \sqm{0.5}{1}; \node at (1,0.5)    {\footnotesize $2$};
    \sq{1}{0};    \node at (1.5,-0.5) {\footnotesize $3$};
    \sq{1.5}{-1}; \node at (2,-1.5)   {\footnotesize $4$};
\end{tikzpicture} & $\mapsto$ &
    $(13452)$ &
\begin{tikzpicture}[scale=0.85]
    \sq{0}{0};    \node at (0.5,-0.5) {\footnotesize $1$};
    \sq{0.5}{-1}; \node at (1,-1.5)   {\footnotesize $2$};
    \sqm{1}{0};   \node at (1.5,-0.5) {\footnotesize $3$};
    \sq{1.5}{-1}; \node at (2,-1.5)   {\footnotesize $4$};
\end{tikzpicture} & $\mapsto$ &
    $(12453)$ \\ && \\
\begin{tikzpicture}[scale=0.85]
    \sqm{0}{0};   \node at (0.5,-0.5) {\footnotesize $1$};
    \sq{0.5}{-1}; \node at (1,-1.5)   {\footnotesize $2$};
    \sq{1}{-2};   \node at (1.5,-2.5) {\footnotesize $3$};
    \sq{1.5}{-1}; \node at (2,-1.5)   {\footnotesize $4$};
\end{tikzpicture} & $\mapsto$ &
    $(12354)$ &
\begin{tikzpicture}[scale=0.85]
    \sq{0}{0};    \node at (0.5,-0.5) {\footnotesize $1$};
    \sqm{0.5}{1}; \node at (1,0.5)    {\footnotesize $2$};
    \sq{1}{0};    \node at (1.5,-0.5) {\footnotesize $3$};
    \sq{1.5}{1};  \node at (2,0.5)    {\footnotesize $4$};
\end{tikzpicture} & $\mapsto$ &
    $(13542)$ \\ && \\
\begin{tikzpicture}[scale=0.85]
    \sqm{0}{0};   \node at (0.5,-0.5) {\footnotesize $1$};
    \sq{0.5}{-1}; \node at (1,-1.5)   {\footnotesize $2$};
    \sq{1}{0};    \node at (1.5,-0.5) {\footnotesize $3$};
    \sq{1.5}{1};  \node at (2,0.5)    {\footnotesize $4$};
\end{tikzpicture} & $\mapsto$ &
    $(14532)$ &
\begin{tikzpicture}[scale=0.85]
    \sq{0}{-2};   \node at (0.5,-2.5) {\footnotesize $1$};
    \sq{0.5}{-1}; \node at (1,-1.5)   {\footnotesize $2$};
    \sq{1}{0};    \node at (1.5,-0.5) {\footnotesize $3$};
    \sqm{1.5}{1}; \node at (2,0.5)    {\footnotesize $4$};
\end{tikzpicture} & $\mapsto$ &
    $(15432)$
\end{tabular}
\begin{center} \begin{tabular}{@{}m{2.7cm} @{}m{0.9cm} @{}m{1.5cm}} \vspace{14pt} \begin{tikzpicture}[scale=0.85]
    \sq{0}{0};    \node at (0.5,-0.5) {\footnotesize $1$};
    \sq{0.5}{1};  \node at (1,0.5)    {\footnotesize $2$};
    \sq{1}{2};    \node at (1.5,1.5)  {\footnotesize $3$};
    \sq{1.5}{1};  \node at (2,0.5)    {\footnotesize $4$};
\end{tikzpicture} & $\mapsto$ &
    $(14532)$
\end{tabular} \end{center}
\caption{Cyclic shifts of the heap of a CFC element of $W(A_4)$ and the corresponding permutations.}\label{fig:cyclicshiftsof1234}
\end{figure}
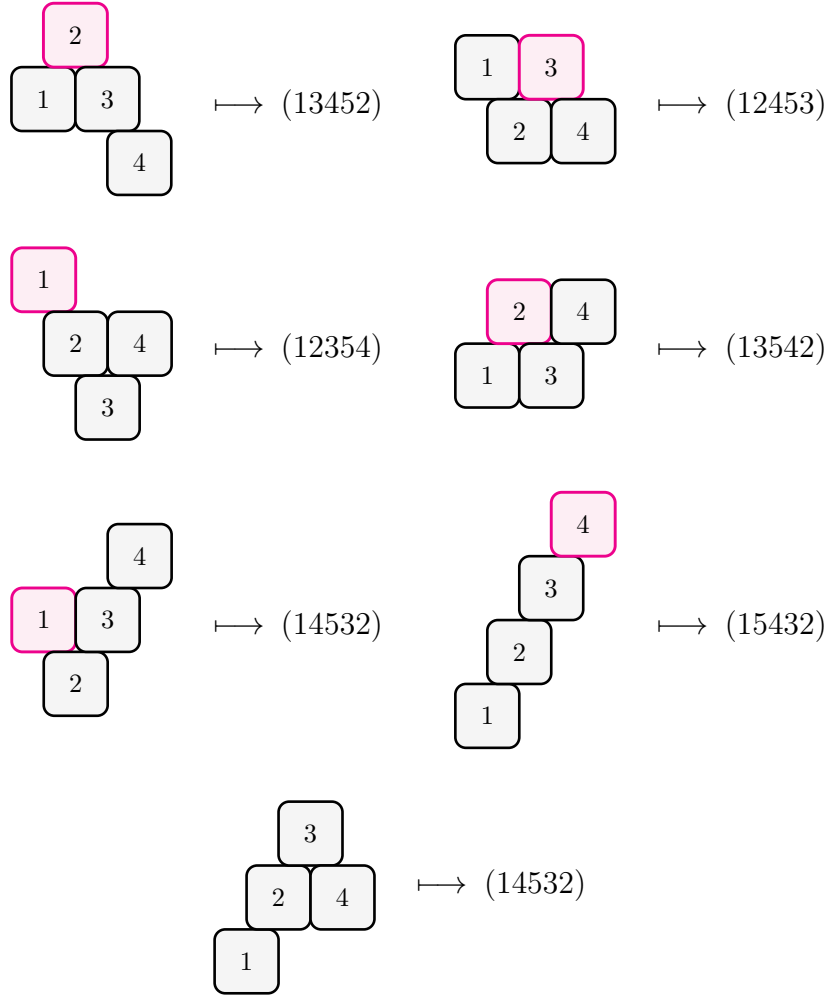 \end{center}

    Recall that two permutations are conjugate if and only if they have the same cycle type.
    Since Coxeter elements in $W(A_n)$ correspond to $(n+1)$-cycles in $S_{n+1}$, all Coxeter elements are conjugate as they have the same cycle type. So, all Coxeter elements are cyclically equivalent by Theorem~\ref{thm:e2} and as seen in Figure~\ref{fig:A4boxes}, but not all $(n+1)$-cycles correspond to Coxeter elements.
     
    For example, $1234 = (12)(23)(34)(45) = (12345)$. There are seven other Coxeter elements conjugate to $1234$, as shown in Figure~\ref{fig:coxeltsinA4}, but there are 24 distinct 5-cycles in $S_5$.

    Given a product of disjoint cycles, we want to be able to determine if the group element corresponding to the permutation is a CFC element.
    For example, which 4-cycles in $S_4$ correspond to CFC elements in $A_3$?
    In order to attempt to answer this question, we need a couple definitions.

    Let $(\cdots i~w(i)~w^2(i) \cdots)$ be a cycle in the permutation corresponding to $w \in W(A_n)$, assuming the smallest element appearing in the cycle is written first. Then there is a \emph{direction change at $w(i)$} if
\begin{enumerate}[label=(\alph*),leftmargin=0.75in]
    \item $i < w(i)$ and $w(i) > w^2(i)$ or
    \item $i > w(i)$ and $w(i) < w^2(i)$.
\end{enumerate}

\begin{example} Consider the symmetric group $S_6$. Let $w = (12435)$ and $y = (135)(246)$ in $S_6$. Then neither cycle for $y$ has a direction change, but there is a direction change at 4 in $w$ since $i = 2 < w(i) = 4$ and $4 = w(i) > w^2(i) = 3$. There is also a direction change at 3 in $w$.
\end{example}

    We define the \emph{support of a cycle $c$ of $w$} to be the set of numbers appearing in the cycle, denoted by $\cyclesupp(c)$.
    Note that $\cyclesupp(c)$ is not the same set as $\supp(w)$, even in the case when $w$ corresponds to a single cycle.
    We say a cycle $c$ has \emph{connected support} if the support of $c$ is a set of consecutive numbers.

\begin{example} The support of the permutation $(1357)$ is $\{1,3,5,7\}$, so $(1357)$ does not have connected support. However, the permutation $(234)$ does have connected support, namely $\cyclesupp((234)) = \{2,3,4\}$.
\end{example}

    Utilizing Sage~\cite{sage}, we witnessed evidence of the following conjecture, which we believe is true in general.

\begin{conjecture}\label{conjecture}
Let $w \in W(A_n)$ correspond to a permutation with disjoint cycles $c_1, c_2, \ldots, c_k$ in $S_{n+1}$. Assume each $c_j$ is written with the smallest number first.
    Then $w \in \CFC(A_n)$ if and only if each $c_j$ has connected support and has at most one direction change.
\end{conjecture}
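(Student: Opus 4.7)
The plan is to prove the conjecture through a structural correspondence: Coxeter elements of type-$A$ parabolic subgroups should correspond exactly to unimodal cycles on contiguous intervals. More precisely, I will first establish the following \emph{main lemma}: an element $c \in W(A_k)$ is a Coxeter element if and only if, viewed as a permutation in $S_{k+1}$ and written in cycle notation with $1$ first, it has the unimodal form $(1, a_2, \ldots, a_{p-1}, k+1, a_{p+1}, \ldots, a_k)$ with $1 < a_2 < \cdots < a_{p-1} < k+1 > a_{p+1} > \cdots > a_k$ for some $2 \leq p \leq k+1$ (where $p = k+1$ gives the purely ascending cycle). Once this is in hand, both directions of the conjecture will follow from Proposition~\ref{prop:CFCiffatmostonce}.

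To prove the main lemma, I would induct on $k$, the base case $k=1$ being immediate. For the inductive step, note that $s_k$ commutes with $s_j$ for all $j \leq k-2$, so any reduced expression $c = s_{i_1} \cdots s_{i_k}$ of a Coxeter element may be rearranged by commutations so that $s_k$ sits in either the rightmost or leftmost position, depending on whether $s_k$ lies to the right or left of $s_{k-1}$. In the rightmost case $c = c' s_k$ with $c' \in W(A_{k-1})$ a Coxeter element, and by the inductive hypothesis $c' = (1, a_2, \ldots, a_{p-1}, k, a_{p+1}, \ldots, a_k)$ is a unimodal $k$-cycle on $\{1, \ldots, k\}$ with peak $k$. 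A direct computation of $c(i) = c'(s_k(i))$ shows that $c$ agrees with $c'$ on $\{1, \ldots, k-1\}$, sends $k \mapsto k+1$, and sends $k+1 \mapsto c'(k)$; the net effect is to insert $k+1$ immediately after $k$ in the cycle, yielding $(1, a_2, \ldots, a_{p-1}, k, k+1, a_{p+1}, \ldots, a_k)$, which remains unimodal with new peak $k+1$. The leftmost case is symmetric and inserts $k+1$ just before $k$. For the reverse inclusion I would argue by cardinalities: $W(A_k)$ has exactly $2^{k-1}$ Coxeter elements (one per acyclic orientation of the path Coxeter graph), and there are exactly $2^{k-1}$ unimodal $(k+1)$-cycles on $\{1, \ldots, k+1\}$ starting with $1$ (parameterized by the subset of $\{2, \ldots, k\}$ that lies on the ascending side of the peak $k+1$); since the forward map is injective into a set of equal cardinality, it is a bijection.

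With the main lemma in hand, both directions of the conjecture follow. For the forward direction, suppose $w \in \CFC(A_n)$; by Proposition~\ref{prop:CFCiffatmostonce} we fix a reduced expression in which each generator of $\supp(w)$ appears exactly once. Partition $\supp(w)$ into maximal contiguous blocks $T_1, \ldots, T_q$ with $T_\ell = \{s_{a_\ell}, \ldots, s_{b_\ell}\}$, where consecutive blocks are separated by at least one missing generator. Generators in distinct blocks commute, so the reduced expression can be grouped as $w = w_1 w_2 \cdots w_q$ with each $w_\ell$ a Coxeter element of the parabolic subgroup $W(T_\ell)$. The main lemma then identifies each $w_\ell$ with a unimodal cycle on the consecutive interval $\{a_\ell, a_\ell + 1, \ldots, b_\ell + 1\}$. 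Since the blocks are separated, these intervals are pairwise disjoint, and the full cycle decomposition of $w$ consists of the $w_\ell$'s together with fixed points, each satisfying the cycle conditions in the conjecture. For the backward direction, given $w$ with each cycle $c_j$ unimodal on a contiguous interval, the main lemma supplies a reduced expression for each $c_j$ using each simple transposition in its interval exactly once. Because distinct cycles occupy disjoint intervals, the corresponding generator sets are disjoint and all pairs commute; concatenating yields a reduced expression for $w$ in which every generator appears at most once, so $w \in \CFC(A_n)$ by Proposition~\ref{prop:CFCiffatmostonce}.

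The principal obstacle will be the inductive step of the main lemma: the computation tracking how multiplication by $s_k$ inserts $k+1$ into the cycle must be performed with care in both the right-attachment and left-attachment cases, and the degenerate subcases in which $c'$ is purely ascending (peak at the end of the cycle notation) or $p = 2$ must be handled separately to confirm unimodality is preserved. Once that bookkeeping is in place, the counting match for the reverse inclusion and the reduction of the conjecture to the main lemma are comparatively routine.
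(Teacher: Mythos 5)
The statement you are proving is explicitly left open in the paper: it appears as Conjecture~\ref{conjecture}, supported only by computational evidence gathered in Sage, and no proof (or even proof sketch) is given anywhere in the thesis. So there is no argument of the author's to compare yours against; your proposal, if written out, would settle something the paper does not.

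On its own merits, your strategy is sound and I could not find a gap. The reduction is exactly the right one: by Proposition~\ref{prop:CFCiffatmostonce} the CFC elements of $W(A_n)$ are the products of Coxeter elements of the parabolic subgroups on the maximal contiguous blocks of the support, these blocks act on pairwise disjoint consecutive intervals, and so everything comes down to your main lemma identifying Coxeter elements of a type-$A$ parabolic with unimodal cycles on its interval. I checked the inductive step: with the paper's right-to-left composition convention, $c = c's_k$ gives $c(k)=k+1$ and $c(k+1)=c'(k)$, which inserts $k+1$ immediately after the peak $k$, and the left-attachment case inserts it immediately before; both preserve unimodality, including the degenerate purely ascending case. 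The counting argument for the converse is legitimate since the paper itself records the bijection between $\C(W)$ and $\Acyc(\Gamma)$, giving $2^{k-1}$ Coxeter elements for the path on $k$ vertices against $2^{k-1}$ unimodal $(k+1)$-cycles. Three small points deserve explicit attention in a write-up. First, you are implicitly reading ``direction change'' as occurring only at interior positions of the written cycle (no wrap-around at the first or last entry); the paper's examples, e.g.\ the treatment of $(12435)$ and $(135)(246)$, confirm this is the intended reading, and under it ``connected support plus at most one direction change'' is precisely ``unimodal on a consecutive interval,'' with the peak forced to be the maximum. Second, in the backward direction you should justify that the concatenated word is reduced before invoking Proposition~\ref{prop:CFCiffatmostonce}; this follows because the blocks generate pairwise commuting parabolic subgroups with disjoint generator sets (the gap of at least one missing index guarantees all cross-block pairs commute), so lengths add. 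Third, the injectivity used in the cardinality step is just the observation that distinct group elements are distinct permutations. None of these is a real obstacle; the proposal is correct.
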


\begin{example} We return to part (e) of Example~\ref{ex:permscyclic}. In that example, we have a 5-cycle corresponding to each cyclic shift of the heap of $1234$.
    Note that each of the 5-cycles satisfies both conditions of Conjecture~\ref{conjecture}.
    However, the cycle $w = (14352)$ has three direction changes, namely, at 4, 3, and 5. 
    One heap of $w$ corresponding to the reduced expression $324134$ is shown in Figure~\ref{fig:dirchangeheap}. Then $w$ is not CFC since $w$ is not FC, due to the appearance of a 434 subword, satisfying Conjecture~\ref{conjecture}.
\begin{center} 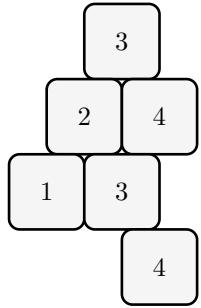
\begin{figure}[H] \centering
\begin{tikzpicture}
    \sq{1}{4};   \node at (1.5,3.5) {\footnotesize $3$};
    \sq{0.5}{3}; \node at (1,2.5)   {\footnotesize $2$};
    \sq{1.5}{3}; \node at (2,2.5)   {\footnotesize $4$};
    \sq{0}{2};   \node at (0.5,1.5) {\footnotesize $1$};
    \sq{1}{2};   \node at (1.5,1.5) {\footnotesize $3$};
    \sq{1.5}{1}; \node at (2,0.5)   {\footnotesize $4$};
\end{tikzpicture}
\caption{A heap for an element of $W(A_4)$ that corresponds to a permutation in $S_5$ with more than one direction change.}\label{fig:dirchangeheap}
\end{figure} \end{center}
\end{example}
    
    Cycle type provides insight into the structure of the sets of conjugate CFC elements. However, our ultimate goal is to generalize to other types of Coxeter groups, where cycle type is not available.


\chapter{Conjugacy classes of CFC elements in Coxeter groups of type $A_n$}

    In this chapter, we focus exclusively on Coxeter systems of type $A_n$.

\section{Chunks and rings}\label{sec:chunks}
    In order to formalize the ``sliding" of cylindrical heaps as first mentioned in Example~\ref{ex:A4boxes}, we develop the notion of chunks and rings.    
    Recall from Section~\ref{sec:CFC} that the CFC elements in Coxeter groups of type $A_n$ are those elements that correspond to subexpressions of Coxeter elements.
    
\begin{definition} Let $w \in \CFC(A_n)$. Then we refer to a \emph{diagonal heap (or diagonal subheap) of size $m$} as shown in Figure~\ref{fig:diagonalheap} where $k'=k+m-1$, $1 \leq k \leq n$, and $1 \leq m \leq n-k+1$.
\begin{center} 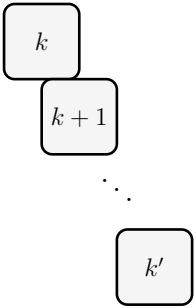
\begin{figure}[H] \centering
\begin{tikzpicture}
    \sq{0}{2};    \node at (0.5,1.5)   {\scalebox{0.8}{$k$}};
    \sq{0.5}{1};  \node at (1,0.5)     {\scalebox{0.8}{$k+1$}};
                  \node at (1.5,-0.35) {$\ddots$};
    \sq{1.5}{-1}; \node at (2,-1.5)    {\scalebox{0.8}{$k'$}};
\end{tikzpicture} \caption{A diagonal heap.}\label{fig:diagonalheap}
\end{figure} \end{center}
\end{definition}

\begin{definition} Let $w \in \CFC(A_n)$. We call a convex subheap of the heap of $w$ consisting of $m$ blocks a \emph{chunk of size $m$} if it corresponds to a maximal connected component of the underlying Hasse diagram for the heap of $w$.
\end{definition}

    Notice that diagonal heaps are examples of heaps consisting of exactly one chunk.
    
\begin{example}\label{ex:chunks} Let $w \in W(A_6)$ have reduced expression $\w = 12356$. Then $w$ is CFC, and its heap is shown in Figure~\ref{fig:chunkex1}. The underlying Hasse diagram is shown in Figure~\ref{fig:chunkex2}.
    Note that there are two connected components of the Hasse diagram, and hence two chunks in $H(w)$.
    We can see the chunks in the original heap, as well. The vertical line in Figure~\ref{fig:chunkex1} shows the separation of the two chunks. The \textcolor{magenta}{pink} and \textcolor{blue}{blue} chunks can move independently in the vertical direction.

\begin{center} 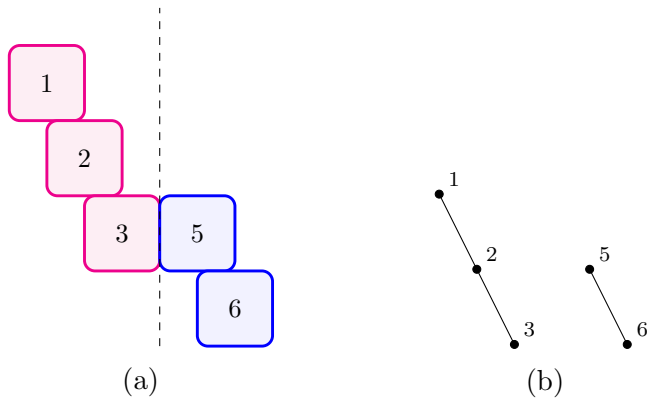
\begin{figure}[H] \centering
\begin{tabular}[b]{cc}
\begin{subfigure}{0.3\textwidth} \centering
\begin{tikzpicture}
    \sqm{0}{3};    \node at (0.5,2.5) {\footnotesize $1$};
    \sqm{0.5}{2};  \node at (1,1.5)   {\footnotesize $2$};
    \sqm{1}{1};    \node at (1.5,0.5) {\footnotesize $3$};
    \sqbl{2}{1};   \node at (2.5,0.5) {\footnotesize $5$};
    \sqbl{2.5}{0}; \node at (3,-0.5)  {\footnotesize $6$};
    \draw[dashed] (2,-1)--(2,3.5);
\end{tikzpicture}
\caption{}\label{fig:chunkex1}
\end{subfigure} &
\begin{subfigure}{0.3\textwidth} \centering \vspace{60pt}
\begin{tikzpicture}
\draw (0.5,2)--(1,1)--(1.5,0); \draw (2.5,1)--(3,0);
\begin{scriptsize}
    \draw [fill=black] (0.5,2) circle (1.5pt);
    \draw (0.7,2.2) node {1};
    \draw [fill=black] (1,1) circle (1.5pt);
    \draw (1.2,1.2) node {2};
    \draw [fill=black] (1.5,0) circle (1.5pt);
    \draw (1.7,0.2) node {3};
    \draw [fill=black] (2.5,1) circle (1.5pt);
    \draw (2.7,1.2) node {5};
    \draw [fill=black] (3,0) circle (1.5pt);
    \draw (3.2,0.2) node {6};
\end{scriptsize}
\end{tikzpicture}
\caption{}\label{fig:chunkex2}
\end{subfigure} \\ & \end{tabular}
\caption{The heap for a CFC element with its chunks colored \textcolor{magenta}{pink} and \textcolor{blue}{blue} together with its underlying Hasse diagram.}\label{fig:chunkex}
\end{figure} \end{center}
\end{example}

    Suppose $w,w' \in \CFC(A_n)$ such that $\hat{H}(w) = \hat{H}(w')$. Suppose $H(w)$ consists of a single chunk of size $k$. Then $H(w')$ also consists of a single chunk of size $k$ having the same blocks as $H(w)$, by Theorem~\ref{thm:e2}.
    
    Now, suppose $w,w' \in \CFC(A_n)$ such that $H(w)$ and $H(w')$ each have chunks $C$ and $C'$, respectively, with exactly the same blocks but not necessarily in the same configuration. Then it follows from Theorem~\ref{thm:e2} that we can obtain $C'$ by applying cyclic shifts to $C$.
    In this case, we say that $C$ and $C'$ are \emph{chunk equivalent}.
    Observe that two chunks are chunk equivalent if and only if they differ by a sequence of cyclic shifts. This generates an equivalence relation on the set of possible chunks of heaps for CFC elements in $W(A_n)$.

    Define $\hat{C}$ to be the equivalence class of the chunk $C$, which we visualize by wrapping representatives on a cylinder. It is easy to show that every chunk equivalence class contains a diagonal representative.
    We call the equivalence class of a chunk a \emph{ring}. That is, a ring is a chunk wrapped on a cylinder.
    
    For each $w \in \CFC(A_n)$, $\hat{H}(w)$ can be thought of as a disjoint union of rings. We can obtain a new cylindrical heap by \emph{sliding} rings by adding some $j \in \Z$ to the label of each block in the ring.
    We say two rings are \emph{slide equivalent} if we can slide one ring $j \in \Z$ spaces to obtain the other ring.
    Note that two rings that are slide equivalent have the same number of blocks.
    Two cylindrical heaps are \emph{slide equivalent} if we can slide the rings of one cylindrical heap to obtain the other cylindrical heap.
    Note that the corresponding rings are in the same order.

\begin{example} Let $w,y \in \CFC(A_9)$ have reduced expressions $\w = 124567$ and $\y = 236789$, respectively.
    Then the cylindrical heaps corresponding to $H(w)$ and $H(y)$ are shown in Figure~\ref{fig:slideexample1.1} and~\ref{fig:slideexample1.2}, respectively. Each of the cylindrical heaps consists of two rings.
    We can see that $\hat{H}(w)$ and $\hat{H}(y)$ are slide equivalent since we can add one to each of the labels of the blocks in the first ring of $\hat{H}(w)$ to obtain the first ring of $\hat{H}(y)$ and add two to each of the blocks in the second ring of $\hat{H}(w)$ to obtain the second ring of $\hat{H}(y)$.
\begin{center} \begin{figure}[H] \centering
\begin{subfigure}{0.4\textwidth} \centering
\begin{tikzpicture}[scale=0.8]
\draw[line width=1.5pt,->] (-0.5,3)--(4.5,3);
    \sq{0}{3};   \node at (0.5,2.5) {\footnotesize $1$};
    \sq{0.5}{2}; \node at (1,1.5)   {\footnotesize $2$};
    \sq{1.5}{2}; \node at (2,1.5)   {\footnotesize $4$};
    \sq{2}{1};   \node at (2.5,0.5) {\footnotesize $5$};
    \sq{2.5}{0}; \node at (3,-0.5)  {\footnotesize $6$};
    \sq{3}{-1};  \node at (3.5,-1.5){\footnotesize $7$};
\draw[line width=1.5pt,->] (-0.5,-2)--(4.5,-2);
\end{tikzpicture}
\caption{}\label{fig:slideexample1.1}
\end{subfigure}
\begin{subfigure}{0.4\textwidth} \centering
\begin{tikzpicture}[scale=0.8]
\draw[line width=1.5pt,->] (-0.5,3)--(5,3);
    \sq{0}{3};   \node at (0.5,2.5) {\footnotesize $2$};
    \sq{0.5}{2}; \node at (1,1.5)   {\footnotesize $3$};
    \sq{2}{2};   \node at (2.5,1.5) {\footnotesize $6$};
    \sq{2.5}{1}; \node at (3,0.5)   {\footnotesize $7$};
    \sq{3}{0};   \node at (3.5,-0.5){\footnotesize $8$};
    \sq{3.5}{-1};\node at (4,-1.5)  {\footnotesize $9$};
\draw[line width=1.5pt,->] (-0.5,-2)--(5,-2);
\end{tikzpicture}
\caption{}\label{fig:slideexample1.2}
\end{subfigure}
\caption{Two slide equivalent cylindrical heaps.}\label{}
\end{figure}
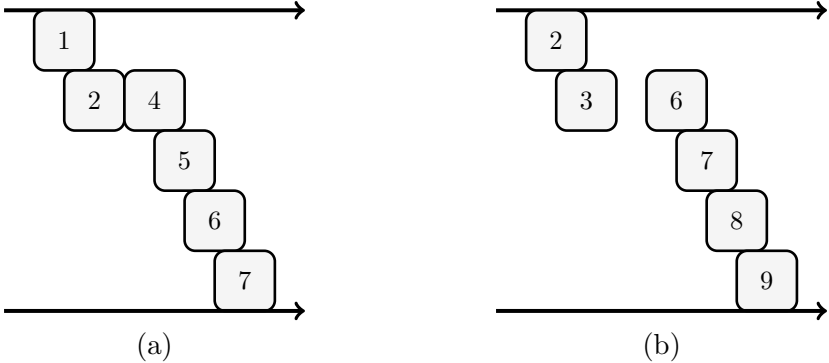 \end{center}
\end{example}
    
\begin{definition}
Suppose $w,w' \in \CFC(A_n)$ are such that $\hat{H}(w)$ and $\hat{H}(w')$ consist exactly of the rings $R_1,\ldots,R_k$ and $R_1',\ldots,R_k'$, respectively.
    Then $\hat{H}(w)$ and $\hat{H}(w')$ are \emph{ring equivalent} if there exists a bijection $R_i \longleftrightarrow R_i'$ such that $R_i$ and $R_i'$ consist of the same number of blocks (not necessarily with the same labels).
\end{definition}
    
\begin{example} Let $w \in W(A_6)$ have reduced expression $\w = 12356$. Then $w$ is CFC, and its heap is shown in Figure~\ref{fig:chunkex1}. Recall that the heap consists of two chunks.
    The cylindrical heap $\hat{H}(w)$ is shown in Figure~\ref{fig:cylheapex1} and the rings are the chunks wrapped on a cylinder, i.e., the rings are as shown in Figure~\ref{fig:cylheapex2}.
    We see that the cylindrical heap in Figure~\ref{fig:cylheapex3} is ring equivalent to $\hat{H}(w)$ because there are exactly two rings, one of size three and one of size two, in each cylindrical heap.

\begin{center} 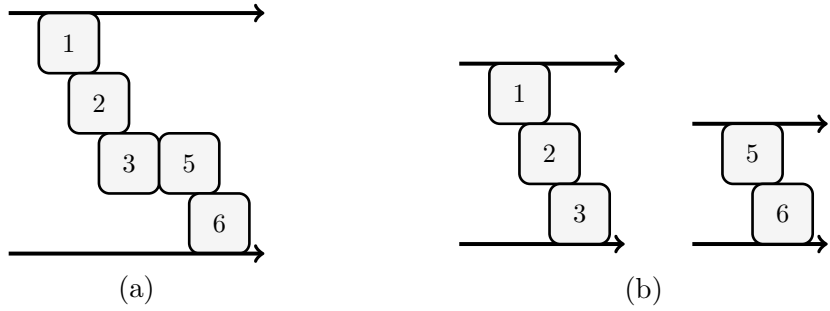
\begin{figure}[H] \centering
\begin{subfigure}{0.4\textwidth} \centering
\begin{tikzpicture}[scale=0.8]
\draw[line width=1.5pt,->] (-0.5,3)--(3.75,3);
    \sq{0}{3};   \node at (0.5,2.5) {\footnotesize $1$};
    \sq{0.5}{2}; \node at (1,1.5)   {\footnotesize $2$};
    \sq{1}{1};   \node at (1.5,0.5) {\footnotesize $3$};
    \sq{2}{1};   \node at (2.5,0.5) {\footnotesize $5$};
    \sq{2.5}{0}; \node at (3,-0.5)  {\footnotesize $6$};
\draw[line width=1.5pt,->] (-0.5,-1)--(3.75,-1);
\end{tikzpicture}
\caption{}\label{fig:cylheapex1}
\end{subfigure}
\begin{subfigure}{0.4\textwidth} \centering \vspace{20pt}
\begin{tabular}{ccc}
\begin{tikzpicture}[scale=0.8]
\draw[line width=1.5pt,->] (-0.5,3)--(2.25,3); \draw[line width=1.5pt,->] (-0.5,0)--(2.25,0);
    \sq{0}{3};   \node at (0.5,2.5) {\footnotesize $1$};
    \sq{0.5}{2}; \node at (1,1.5)   {\footnotesize $2$};
    \sq{1}{1};   \node at (1.5,0.5) {\footnotesize $3$};
\end{tikzpicture} &&
\begin{tikzpicture}[scale=0.8]
\draw[line width=1.5pt,->] (-0.5,1)--(1.75,1); \draw[line width=1.5pt,->] (-0.5,-1)--(1.75,-1);
    \sq{0}{1};   \node at (0.5,0.5) {\footnotesize $5$};
    \sq{0.5}{0}; \node at (1,-0.5)  {\footnotesize $6$};
\end{tikzpicture} \end{tabular}
\caption{}\label{fig:cylheapex2}
\end{subfigure}
\caption{The cylindrical heap and rings for some CFC element.\label{fig:cylheap}}
\end{figure}
\end{center}

\begin{figure}[H] \centering \vspace{-30pt}
\begin{subfigure}{0.4\textwidth} \centering
\begin{tikzpicture}[scale=0.8]
\draw[line width=1.5pt,->] (-0.5,3)--(4.25,3);
    \sq{0}{3};   \node at (0.5,2.5) {\footnotesize $3$};
    \sq{0.5}{2}; \node at (1,1.5)   {\footnotesize $4$};
    \sq{2}{2};   \node at (2.5,1.5) {\footnotesize $7$};
    \sq{2.5}{1}; \node at (3,0.5)   {\footnotesize $8$};
    \sq{3}{0};   \node at (3.5,-0.5){\footnotesize $9$};
\draw[line width=1.5pt,->] (-0.5,-1)--(4.25,-1);
\end{tikzpicture}
\caption{}\label{cylheapex3.1}
\end{subfigure}
\begin{subfigure}{0.4\textwidth} \centering \vspace{18pt}
\begin{tabular}{ccc}
\begin{tikzpicture}[scale=0.8]
\draw[line width=1.5pt,->] (-0.5,1)--(1.75,1); \draw[line width=1.5pt,->] (-0.5,-1)--(1.75,-1);
    \sq{0}{1};   \node at (0.5,0.5) {\footnotesize $3$};
    \sq{0.5}{0}; \node at (1,-0.5)  {\footnotesize $4$};
\end{tikzpicture} &&
\begin{tikzpicture}[scale=0.8]
\draw[line width=1.5pt,->] (-0.5,3)--(2.25,3); \draw[line width=1.5pt,->] (-0.5,0)--(2.25,0);
    \sq{0}{3};   \node at (0.5,2.5) {\footnotesize $7$};
    \sq{0.5}{2}; \node at (1,1.5)   {\footnotesize $8$};
    \sq{1}{1};   \node at (1.5,0.5) {\footnotesize $9$};
\end{tikzpicture} \end{tabular}
\caption{}\label{fig:cylheapex3.2}
\end{subfigure}
\caption{A cylindrical heap ring equivalent to the one in Figure~\ref{fig:cylheapex1} together with its rings.}\label{fig:cylheapex3}
\end{figure}
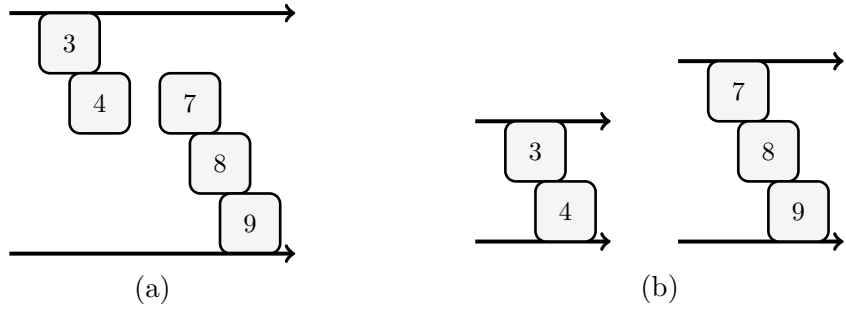
\end{example}

\begin{definition}
We refer to a heap as \emph{simple} if it is as shown in Figure~\ref{fig:simple}, where each chunk is diagonal, the leftmost chunk starts at 1, and the chunks are as close to each other as possible.
\begin{center} 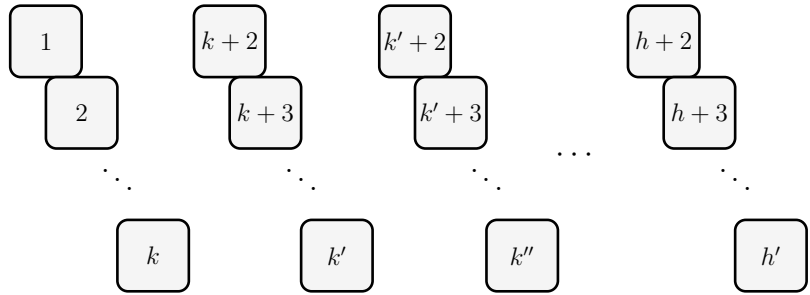
\begin{figure}[H] \centering
\begin{tabular}{m{2cm} m{2cm} m{2cm} m{0.5cm} m{2cm}}
\begin{tikzpicture}[scale=0.95]
    \sq{0}{3};    \node at (0.5,2.5) {\scalebox{0.8}{$1$}};
    \sq{0.5}{2};  \node at (1,1.5)   {\scalebox{0.8}{$2$}};
                  \node at (1.5,0.7) {$\ddots$};
    \sq{1.5}{0};  \node at (2,-0.5)  {\scalebox{0.8}{$k$}};
\end{tikzpicture} &
\begin{tikzpicture}[scale=0.95]
    \sq{2.5}{3};  \node at (3,2.5)   {\scalebox{0.8}{$k+2$}};
    \sq{3}{2};    \node at (3.5,1.5) {\scalebox{0.8}{$k+3$}};
                  \node at (4,0.7)   {$\ddots$};
    \sq{4}{0};    \node at (4.5,-0.5){\scalebox{0.8}{$k'$}};
\end{tikzpicture} &
\begin{tikzpicture}[scale=0.95]
    \sq{2.5}{3};  \node at (3,2.5)   {\scalebox{0.8}{$k'+2$}};
    \sq{3}{2};    \node at (3.5,1.5) {\scalebox{0.8}{$k'+3$}};
                  \node at (4,0.7)   {$\ddots$};
    \sq{4}{0};    \node at (4.5,-0.5){\scalebox{0.8}{$k''$}};
\end{tikzpicture} & $\cdots$ &
\begin{tikzpicture}[scale=0.95]
    \sq{2.5}{3};  \node at (3,2.5)   {\scalebox{0.8}{$h+2$}};
    \sq{3}{2};    \node at (3.5,1.5) {\scalebox{0.8}{$h+3$}};
                  \node at (4,0.7)   {$\ddots$};
    \sq{4}{0};    \node at (4.5,-0.5){\scalebox{0.8}{$h'$}};
\end{tikzpicture}
\end{tabular}
\caption{A simple heap.}\label{fig:simple}
\end{figure}
\end{center}
\end{definition}

\begin{remark} Note that if $w \in \CFC(A_n)$, then there exists some $y \in \CFC(A_n)$ having a simple heap such that $\hat{H}(w)$ is ring equivalent to $\hat{H}(y)$.
\end{remark}

\section{Conjugacy classes of CFC elements in $W(A_n)$}\label{sec:conjugacyofCFC}
    In this section, we give a constructive description of conjugate CFC elements. The goal of this section is to prove the following theorem.
    
\begin{theorem}\label{thm:conjiffring} Let $w,y \in \CFC(A_n)$. Then $w$ and $y$ are conjugate if and only if $\hat{H}(w)$ and $\hat{H}(y)$ are ring equivalent.
\end{theorem}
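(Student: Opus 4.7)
The plan is to translate the entire question into a statement about cycle types in $S_{n+1}$, via the isomorphism $W(A_n) \cong S_{n+1}$ sending $s_i \mapsto (i,i{+}1)$. This reduction makes both directions of the biconditional nearly immediate once a key dictionary between chunks and cycles has been established.

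The heart of the proof is the following \emph{chunk-to-cycle dictionary}: if $w \in \CFC(A_n)$ has chunks of sizes $m_1,\ldots,m_r$, then the non-trivial cycles of $w$ in $S_{n+1}$ have lengths exactly $m_1+1,\ldots,m_r+1$ (and all other points are fixed). I would establish this in three small steps. First, by Proposition~\ref{prop:CFCiffatmostonce} every generator in $\supp(w)$ appears exactly once, and by the definition of chunks the generators in different chunks commute pairwise; hence $w$ factors as a product $w_1 w_2 \cdots w_r$ of pairwise commuting elements, with $w_i$ involving only the generators of the $i$-th chunk. Second, each $w_i$ is a Coxeter element of a parabolic subgroup of type $A_{m_i}$: a direct computation on the diagonal representative $s_k s_{k+1}\cdots s_{k+m_i-1}$ shows that $w_i$ corresponds to an $(m_i+1)$-cycle on the consecutive integers $\{k,k+1,\ldots,k+m_i\}$, and by Theorem~\ref{thm:e2} this cycle structure is invariant under the cyclic shifts that describe all the other representatives in the chunk's equivalence class. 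Third, because distinct chunks are separated by at least one missing generator, the consecutive-integer supports of the cycles $w_i$ are pairwise disjoint, so the factorization $w = w_1\cdots w_r$ is the disjoint cycle decomposition of $w$ in $S_{n+1}$.

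Once the dictionary is in place, the forward direction is immediate: if $w$ and $y$ are conjugate in $W(A_n)$ they are conjugate in $S_{n+1}$, hence share a cycle type, which by the dictionary means that the multisets of chunk sizes of $H(w)$ and $H(y)$ agree, which is precisely the definition of ring equivalence of $\hat H(w)$ and $\hat H(y)$. The converse is equally short: if $\hat H(w)$ and $\hat H(y)$ are ring equivalent then the multisets of ring (equivalently, chunk) sizes agree, so by the dictionary $w$ and $y$ realize the same cycle type in $S_{n+1}$, and two permutations with the same cycle type are conjugate in $S_{n+1} \cong W(A_n)$.

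The principal obstacle is proving the chunk-to-cycle dictionary cleanly. The disjointness of supports is the subtle point: if a chunk ends at $s_{a'}$ and the next chunk begins at $s_b$, then $b \geq a'+2$ forces the integer intervals $\{a,\ldots,a'+1\}$ and $\{b,\ldots,b'+1\}$ to be disjoint (since $a'+1 < b$), and any point strictly between those intervals is a fixed point—here the indexing shift between ``generator $s_i$'' and ``transposition $(i,i{+}1)$'' has to be handled with care. I expect this is exactly the role of the lemmas promised in the last section: one lemma to justify the commuting factorization $w=w_1\cdots w_r$ along chunks, one lemma computing the cycle structure of a chunk in its diagonal form and invoking Theorem~\ref{thm:e2} to extend it to the whole chunk equivalence class, and one lemma establishing the boundary disjointness that makes the factorization into a cycle decomposition.
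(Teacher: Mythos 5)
Your proof is correct, but it takes a genuinely different route from the paper in the backward direction. The forward direction is essentially the paper's argument: the paper also observes that a chunk of size $\ell$ corresponds to an $(\ell+1)$-cycle with connected support and concludes from invariance of cycle type that conjugate CFC elements have matching multisets of chunk sizes. (You spell out the supporting details --- the commuting factorization along chunks, the consecutiveness of each chunk's support, and the disjointness coming from the gap of at least one missing generator --- more explicitly than the paper does, and correctly.) Where you diverge is the converse: you simply run the dictionary in reverse and invoke the fact that two permutations of $S_{n+1}$ with the same cycle type are conjugate. This is valid and makes the whole theorem an almost immediate corollary of the chunk-to-cycle dictionary. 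The paper instead gives a constructive proof of the converse: it realizes the passage between ring-equivalent cylindrical heaps by explicit conjugating elements, via cyclic shifts together with Lemma~\ref{lem:slide} (sliding a diagonal ring one space by conjugating by $(k)(k+1)\cdots(k'+1)$) and Lemma~\ref{lem:swap} (permuting two adjacent diagonal chunks), with Lemma~\ref{lem:stst} and Lemma~\ref{lem:boomerang} doing the heap-level bookkeeping. What your approach buys is brevity and transparency; what the paper's approach buys is an explicit conjugator decomposed into moves that make sense for heaps in any Coxeter type, which matters because --- as the paper notes in its closing paragraph --- cycle type is unavailable outside type $A$, so your converse argument (like the paper's forward direction) would not survive the intended generalization, whereas the slide/swap machinery might.
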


    The following example motivates the proof of Lemma~\ref{lem:slide}.

\begin{example}\label{ex:slide} Let $w,y \in W(A_7)$ have reduced expressions $\w = 3456$ and $\y = 4567$, respectively. Note that $w$ and $y$ are both CFC, so there is a unique heap for each, as shown in Figures~\ref{fig:slideex0.1} and~\ref{fig:slideex0.2}, respectively. Notice that $\hat{H}(w)$ and $\hat{H}(y)$ are ring equivalent.

\begin{center} \begin{figure}[H] \centering
\begin{subfigure}{0.3\textwidth} \centering
\begin{tikzpicture}[scale=0.85]
    \sq{0}{10};       \node at (0.5,9.5) {\footnotesize $3$};
    \sq{0.5}{9};      \node at (1,8.5)   {\footnotesize $4$};
    \sq{1}{8};        \node at (1.5,7.5) {\footnotesize $5$};
    \sq{1.5}{7};      \node at (2,6.5)   {\footnotesize $6$};
\end{tikzpicture}
\caption{}\label{fig:slideex0.1}
\end{subfigure}
\begin{subfigure}{0.3\textwidth} \centering
\begin{tikzpicture}[scale=0.85]
    \sq{0}{10};       \node at (0.5,9.5) {\footnotesize $4$};
    \sq{0.5}{9};      \node at (1,8.5)   {\footnotesize $5$};
    \sq{1}{8};        \node at (1.5,7.5) {\footnotesize $6$};
    \sq{1.5}{7};      \node at (2,6.5)   {\footnotesize $7$};
\end{tikzpicture}
\caption{}\label{fig:slideex0.2}
\end{subfigure}
\caption{The heaps for Example~\ref{ex:slide}.}\label{fig:slideex0}
\end{figure}
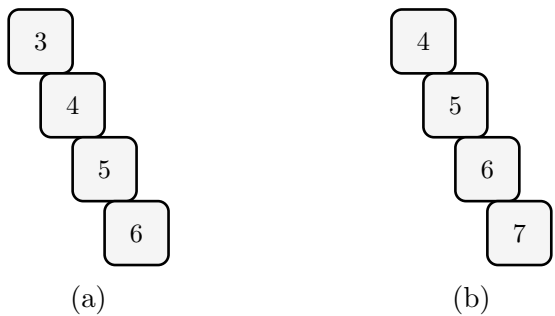 \end{center}

    We claim that we can obtain $y$ by conjugating $w$ by $x$, where $x$ has reduced expression $\x = 34567$.
    Then the heap of ${\x}{\w}{\x^{-1}}$ is shown in Figure~\ref{fig:slideexheap1}, where the \textcolor{orange}{orange} blocks correspond to the heap of $x$, the \textcolor{blue}{blue} blocks correspond to the heap of $w$, and the \textcolor{ggreen}{green} blocks correspond to the heap of $x^{-1}$.
    Then applying Lemma~\ref{lem:stst} to the extra long $76$-chain, denoted in the heap in Figure~\ref{fig:slideexheap1} by the hatched blocks, we get the heap shown in Figure~\ref{fig:slideexheap2}.
    Now we can apply Lemma~\ref{lem:stst} to the extra long $65$-chain to get the heap in Figure~\ref{fig:slideexheap3}.
    Continuing in this manner, applying Lemma~\ref{lem:stst} to the new extra long $s_is_j$-chains created, we get the heap in Figure~\ref{fig:slideexheap4.1}.
    Applying Lemma~\ref{lem:stst} to the extra long $34$-chain, denoted in the heap by the hatched blocks, we get the heap in Figure~\ref{fig:slideexheap4.2}.
    Then, canceling the two adjacent $3$ blocks, denoted in the heap by the checked blocks, the result follows, as shown in Figure~\ref{fig:slideexheap4.3}.
    
\begin{center} \begin{figure}[H] \centering
\begin{subfigure}{0.3\textwidth} \centering
\begin{tikzpicture}
    \sqor{0}{10};       \node at (0.5,9.5) {\footnotesize $3$};
    \sqor{0.5}{9};      \node at (1,8.5)   {\footnotesize $4$};
    \sqor{1}{8};        \node at (1.5,7.5) {\footnotesize $5$};
    \sqor{1.5}{7};      \node at (2,6.5)   {\footnotesize $6$};
    \sqorhash{2}{6};    \node at (2.5,5.5) {\footnotesize $7$};
    \sqbl{0}{8};        \node at (0.5,7.5) {\footnotesize $3$};
    \sqbl{0.5}{7};      \node at (1,6.5)   {\footnotesize $4$};
    \sqbl{1}{6};        \node at (1.5,5.5) {\footnotesize $5$};
    \sqblhash{1.5}{5};  \node at (2,4.5)   {\footnotesize $6$};
    \sqgrhash{2}{4};    \node at (2.5,3.5) {\footnotesize $7$};
    \sqgrhash{1.5}{3};  \node at (2,2.5)   {\footnotesize $6$};
    \sqgr{1}{2};        \node at (1.5,1.5) {\footnotesize $5$};
    \sqgr{0.5}{1};      \node at (1,0.5)   {\footnotesize $4$};
    \sqgr{0}{0};        \node at (0.5,-0.5){\footnotesize $3$};
\end{tikzpicture}
\caption{}\label{fig:slideexheap1}
\end{subfigure}
\begin{subfigure}{0.3\textwidth} \centering \vspace{56pt}
\begin{tikzpicture}
    \sqor{0}{10};       \node at (0.5,9.5) {\footnotesize $3$};
    \sqor{0.5}{9};      \node at (1,8.5)   {\footnotesize $4$};
    \sqor{1}{8};        \node at (1.5,7.5) {\footnotesize $5$};
    \sqorhash{1.5}{7};  \node at (2,6.5)   {\footnotesize $6$};
    \sqbl{0}{8};        \node at (0.5,7.5) {\footnotesize $3$};
    \sqbl{0.5}{7};      \node at (1,6.5)   {\footnotesize $4$};
    \sqblhash{1}{6};    \node at (1.5,5.5) {\footnotesize $5$};
    \sqblhash{1.5}{5};  \node at (2,4.5)   {\footnotesize $6$};
    \sqgr{2}{4};        \node at (2.5,3.5) {\footnotesize $7$};
    \sqgrhash{1}{4};    \node at (1.5,3.5) {\footnotesize $5$};
    \sqgr{0.5}{3};      \node at (1,2.5)   {\footnotesize $4$};
    \sqgr{0}{2};        \node at (0.5,1.5) {\footnotesize $3$};
\end{tikzpicture}
\caption{}\label{fig:slideexheap2}
\end{subfigure}
\begin{subfigure}{0.3\textwidth} \centering \vspace{112pt}
\begin{tikzpicture}
    \sqor{0}{10};       \node at (0.5,9.5) {\footnotesize $3$};
    \sqor{0.5}{9};      \node at (1,8.5)   {\footnotesize $4$};
    \sqorhash{1}{8};    \node at (1.5,7.5) {\footnotesize $5$};
    \sqbl{0}{8};        \node at (0.5,7.5) {\footnotesize $3$};
    \sqblhash{0.5}{7};  \node at (1,6.5)   {\footnotesize $4$};
    \sqblhash{1}{6};    \node at (1.5,5.5) {\footnotesize $5$};
    \sqbl{1.5}{5};      \node at (2,4.5)   {\footnotesize $6$};
    \sqgr{2}{4};        \node at (2.5,3.5) {\footnotesize $7$};
    \sqgrhash{0.5}{5};  \node at (1,4.5)   {\footnotesize $4$};
    \sqgr{0}{4};        \node at (0.5,3.5) {\footnotesize $3$};
\end{tikzpicture}
\caption{}\label{fig:slideexheap3}
\end{subfigure}
\begin{subfigure}{0.3\textwidth} \centering
\begin{tikzpicture}
    \sqorhash{0}{10};   \node at (0.5,9.5) {\footnotesize $3$};
    \sqorhash{0.5}{9};  \node at (1,8.5)   {\footnotesize $4$};
    \sqblhash{0}{8};    \node at (0.5,7.5) {\footnotesize $3$};
    \sqblhash{0.5}{7};  \node at (1,6.5)   {\footnotesize $4$};
    \sqbl{1}{6};        \node at (1.5,5.5) {\footnotesize $5$};
    \sqbl{1.5}{5};      \node at (2,4.5)   {\footnotesize $6$};
    \sqgr{2}{4};        \node at (2.5,3.5) {\footnotesize $7$};
    \sqgr{0}{6};        \node at (0.5,5.5) {\footnotesize $3$};
\end{tikzpicture}
\caption{}\label{fig:slideexheap4.1}
\end{subfigure}
\begin{subfigure}{0.3\textwidth} \centering \vspace{55pt}
\begin{tikzpicture}
    \sqor{0.5}{8};      \node at (1,7.5)   {\footnotesize $4$};
    \sqblcheck{0}{7};   \node at (0.5,6.5) {\footnotesize $3$};
    \sqbl{1}{6};        \node at (1.5,5.5) {\footnotesize $5$};
    \sqbl{1.5}{5};      \node at (2,4.5)   {\footnotesize $6$};
    \sqgr{2}{4};        \node at (2.5,3.5) {\footnotesize $7$};
    \sqgrcheck{0}{6};   \node at (0.5,5.5) {\footnotesize $3$};
\end{tikzpicture}
\caption{}\label{fig:slideexheap4.2}
\end{subfigure}
\begin{subfigure}{0.3\textwidth} \centering \vspace{80pt}
\begin{tikzpicture}
    \sqbl{0.5}{7};      \node at (1,6.5)   {\footnotesize $4$};
    \sqbl{1}{6};        \node at (1.5,5.5) {\footnotesize $5$};
    \sqbl{1.5}{5};      \node at (2,4.5)   {\footnotesize $6$};
    \sqgr{2}{4};        \node at (2.5,3.5) {\footnotesize $7$};
\end{tikzpicture}
\caption{}\label{fig:slideexheap4.3}
\end{subfigure}
\caption{The heaps for Example~\ref{ex:slide}.}\label{fig:slideex}
\end{figure}
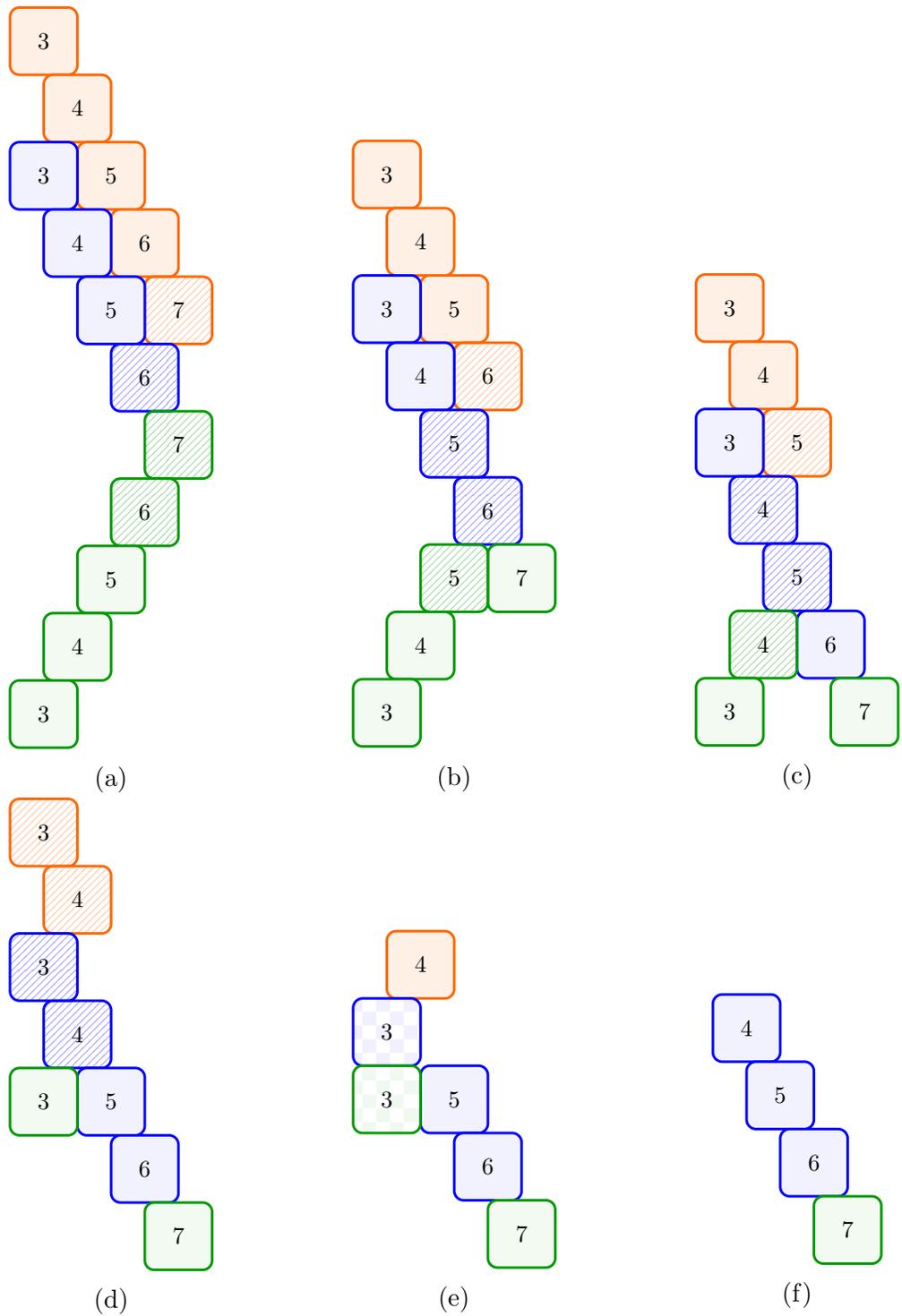
\end{center}
\end{example}

    We can generalize the technique used in the previous example to prove the following lemma, which shows that two CFC elements are conjugate if their cylindrical heaps are slide equivalent by one space.
    
\begin{lemma}\label{lem:slide}
Let $w \in \CFC(A_n)$. Suppose $H(w)$ has a diagonal chunk $C$ where the largest label of $C$ is $k' \leq n-1$.
    Let $y \in \CFC(A_n)$ such that $\hat{H}(y)$ is slide equivalent to $\hat{H}(w)$ by sliding $\hat{C}$, the ring for the chunk $C$, one space to the right. Then $y$ and $w$ are conjugate.
    That is, we can slide the ring in Figure~\ref{fig:slide1} to the ring in Figure~\ref{fig:slide2}, where $k' = k+m$ for some $m$ via conjugation.
\end{lemma}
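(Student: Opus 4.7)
The plan is to exhibit an explicit conjugating element and show directly that $xwx^{-1} = y$. With $C = s_k s_{k+1} \cdots s_{k'}$ denoting the chunk to be slid, take $x = s_k s_{k+1} \cdots s_{k'+1}$, essentially $C$ extended by one more generator on the right. The claim will be proved in two steps: first, that conjugation by $x$ leaves the rest of $w$ untouched, and second, that $xCx^{-1}$ equals the shifted chunk $C' = s_{k+1} s_{k+2} \cdots s_{k'+1}$.

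First I would check that $x$ commutes with every generator of $w$ lying outside of $C$. Since $C$ is a maximal connected component of the Hasse diagram of $H(w)$ with smallest and largest labels $k$ and $k'$, neither $s_{k-1}$ nor $s_{k'+1}$ can occur elsewhere in $w$: either would be edge-connected to $C$ in the Hasse diagram and therefore forced into $C$. The hypothesis that the slid heap is ring equivalent to $\hat{H}(w)$ further rules out $s_{k'+2}$, because if it appeared in another chunk of $w$, then in $y$ the new ring $C'$ would merge with it through $s_{k'+1}$, decreasing the number of rings. Consequently every other generator $s_j$ in $\supp(w)$ satisfies $|j-i| \geq 2$ for all $i \in \{k, \ldots, k'+1\}$ and hence commutes with each letter of $x$. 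Writing a reduced expression $\w = \mathsf{a}\cdot s_k s_{k+1}\cdots s_{k'}\cdot \mathsf{b}$ with $\mathsf{a}$ and $\mathsf{b}$ commuting past $x$ and $x^{-1}$, we obtain $xwx^{-1} = \mathsf{a}\cdot (xCx^{-1})\cdot \mathsf{b}$.

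It then remains to verify the central identity $x\cdot(s_k s_{k+1}\cdots s_{k'})\cdot x^{-1} = s_{k+1} s_{k+2}\cdots s_{k'+1}$. The cleanest route is through the isomorphism $W(A_n)\cong S_{n+1}$: as a permutation, $x$ is the cycle $(k,k+1,\ldots,k'+2)$ and $s_k\cdots s_{k'}$ is the cycle $(k,k+1,\ldots,k'+1)$, so conjugating the latter by the former merely relabels its entries, producing the cycle $(k+1,k+2,\ldots,k'+2)$, which is exactly $s_{k+1}\cdots s_{k'+1}$. Alternatively one can argue by induction on $m = k'-k+1$: the base case reduces to $s_k s_{k+1} s_k s_{k+1} s_k = s_{k+1}$ by a single application of Lemma~\ref{lem:stst}; the inductive step peels off the outermost $s_{k'+1}$ factors, uses the braid move $s_{k'+1}s_{k'}s_{k'+1} = s_{k'}s_{k'+1}s_{k'}$, and applies the inductive hypothesis to the interior. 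This inductive form mirrors the repeated collapsing of extra long chains via Lemma~\ref{lem:stst} depicted in Example~\ref{ex:slide}.

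The main obstacle is careful bookkeeping of which generators can or cannot appear in $w$ outside of $C$: in particular, justifying that the ring-equivalence hypothesis genuinely forces $s_{k'+2} \notin \supp(w)$, so that the splitting $\w = \mathsf{a}\cdot C\cdot \mathsf{b}$ really does place every letter of $\mathsf{a}\mathsf{b}$ at distance at least two from every letter of $x$. Once that is in hand, combining the commutation argument with the identity $xCx^{-1} = C'$ yields $xwx^{-1} = y$, so $w$ and $y$ are conjugate.
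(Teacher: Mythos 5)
Your proposal is correct and follows essentially the same route as the paper: you conjugate by the same element $x = s_k s_{k+1}\cdots s_{k'+1}$, reduce to the single-chunk case, and collapse the resulting expression either via repeated applications of Lemma~\ref{lem:stst} (exactly the paper's heap computation) or, equivalently, by the cycle computation in $S_{n+1}$. Your explicit justification that no generator outside $C$ (in particular $s_{k'+2}$) can interfere with $x$ is a welcome sharpening of the paper's ``without loss of generality'' reduction, but it does not change the substance of the argument.
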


\begin{center}
\begin{figure}[H] \centering
\begin{subfigure}{0.3\textwidth} \centering
\begin{tikzpicture}[scale=0.9]
\draw[line width=1.5pt,->] (-0.5,2)--(3,2); \draw[line width=1.5pt,->] (-0.5,-2)--(3,-2);
    \sq{0}{2};    \node at (0.5,1.5)   {\scalebox{0.75}{$k$}};
    \sq{0.5}{1};  \node at (1,0.5)     {\scalebox{0.75}{$k+1$}};
                  \node at (1.5,-0.35) {$\ddots$};
    \sq{1.5}{-1}; \node at (2,-1.5)    {\scalebox{0.75}{$k'$}};
\end{tikzpicture}
\caption{}\label{fig:slide1}
\end{subfigure}
\begin{subfigure}{0.3\textwidth} \centering
\begin{tikzpicture}[scale=0.9]
\draw[line width=1.5pt,->] (-0.5,2)--(3,2); \draw[line width=1.5pt,->] (-0.5,-2)--(3,-2);
    \sq{0}{2};    \node at (0.5,1.5)   {\scalebox{0.75}{$k+1$}};
    \sq{0.5}{1};  \node at (1,0.5)     {\scalebox{0.75}{$k+2$}};
                  \node at (1.5,-0.35) {$\ddots$};
    \sq{1.5}{-1}; \node at (2,-1.5)    {\scalebox{0.75}{$k'+1$}};
\end{tikzpicture}
\caption{}\label{fig:slide2}
\end{subfigure}
\caption{The rings for Lemma~\ref{lem:slide}.}\label{fig:slidelemma}
\end{figure}
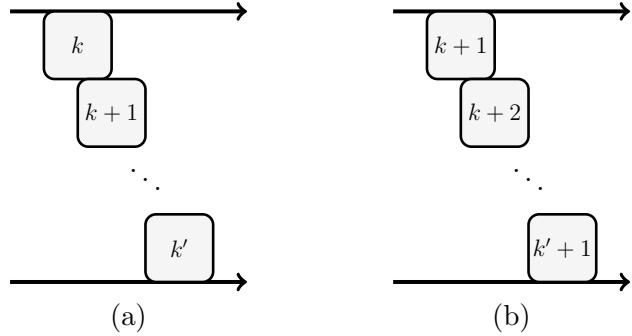
\end{center}

\begin{proof}
Without loss of generality, assume $H(w)$ consists of a diagonal single chunk, where $\w = (k)(k+1) \cdots (k')$ is a reduced expression for $w$.
    Notice that this implies there is no block in $H(w)$ labeled by $k'+1$.

    Let $x$ have reduced expression $\x = (k)(k+1) \cdots (k')(k'+1)$. Then a heap of $w$ conjugated by $x$, namely, $H({\x}{\w}{\x^{-1}})$, is shown in Figure~\ref{fig:slideheap1}, where the \textcolor{orange}{orange} blocks correspond to the heap of $x$, the \textcolor{blue}{blue} blocks correspond to the heap of $w$, and the \textcolor{ggreen}{green} blocks correspond to the heap of $x^{-1}$.
    Then, we have an extra long $(k'+1)(k')$-chain, denoted in the heap by the hatched blocks.
    By Lemma~\ref{lem:stst}, we get the heap in Figure~\ref{fig:slideheap2} since the \textcolor{orange}{orange} $k'+1$ block and the \textcolor{ggreen}{green} $k'$ block are eliminated.
    Now, in the new heap in Figure~\ref{fig:slideheap2}, we have an extra long $(k')(k'-1)$-chain, denoted by the hatched blocks, so applying Lemma~\ref{lem:stst} again, we eliminate the \textcolor{orange}{orange} $k'$ block and the \textcolor{ggreen}{green} $k'-1$ block.
    
    Continuing in this manner, applying $m$ iterations of Lemma~\ref{lem:stst} to $m$ extra long $s_is_j$-chains, we get the heap shown in Figure~\ref{fig:slideheap3.1}.
    After applying Lemma~\ref{lem:stst} to the extra long $(k)(k+1)$-chain, denoted in the heap by the hatched blocks, we get the heap in Figure~\ref{fig:slideheap3.2}.
    Then, canceling the two adjacent $k$ blocks, denoted in the heap by the checked blocks, the result follows, as shown in Figure~\ref{fig:slideheap3.3}.
    Note that the last application of Lemma~\ref{lem:stst} was to an extra long $(k)(k+1)$-chain located at the top of the heap.
\end{proof}

\begin{center}
\begin{figure}[htpb] \centering
\begin{subfigure}{0.3\textwidth} \centering
\begin{tikzpicture}[scale=0.9]
    \sqor{0}{12};       \node at (0.5,11.5){\scalebox{0.8}{$k$}};
    \sqor{0.5}{11};     \node at (1,10.5)  {\scalebox{0.8}{$k+1$}};
    \sqor{1}{10};       \node at (1.5,9.5) {\scalebox{0.8}{$k+2$}};
                        \node at (2.55,8.7){$\ddots$};
    \sqor{2.5}{8};      \node at (3,7.5)   {\scalebox{0.8}{$k'$}};
    \sqorhash{3}{7};    \node at (3.5,6.5) {\scalebox{0.8}{$k'+1$}};
    \sqbl{0}{10};       \node at (0.5,9.5) {\scalebox{0.8}{$k$}};
    \sqbl{0.5}{9};      \node at (1,8.5)   {\scalebox{0.8}{$k+1$}};
                        \node at (1.5,7.6) {$\ddots$};
    \sqbl{2}{7};        \node at (2.5,6.5) {\scalebox{0.8}{$k'-1$}};
    \sqblhash{2.5}{6};  \node at (3,5.5)   {\scalebox{0.8}{$k'$}};
    \sqgrhash{3}{5};    \node at (3.5,4.5) {\scalebox{0.8}{$k'+1$}};
    \sqgrhash{2.5}{4};  \node at (3,3.5)   {\scalebox{0.8}{$k'$}};
    \sqgr{2}{3};        \node at (2.5,2.5) {\scalebox{0.8}{$k'-1$}};
                        \node at (1.9,1.6) {$\iddots$};
    \sqgr{0.5}{1};      \node at (1,0.5)   {\scalebox{0.8}{$k+1$}};
    \sqgr{0}{0};        \node at (0.5,-0.5){\scalebox{0.8}{$k$}};
\end{tikzpicture}
\caption{}\label{fig:slideheap1}
\end{subfigure}
\begin{subfigure}{0.3\textwidth} \centering \vspace{49pt}
\begin{tikzpicture}[scale=0.9]
    \sqor{0}{12};   \node at (0.5,11.5){\scalebox{0.8}{$k$}};
    \sqor{0.5}{11}; \node at (1,10.5)  {\scalebox{0.8}{$k+1$}};
    \sqor{1}{10};   \node at (1.5,9.5) {\scalebox{0.8}{$k+2$}};
                    \node at (2.55,8.7){$\ddots$};
    \sqorhash{2.5}{8};\node at (3,7.5) {\scalebox{0.8}{$k'$}};
    
    \sqbl{0}{10};   \node at (0.5,9.5) {\scalebox{0.8}{$k$}};
    \sqbl{0.5}{9};  \node at (1,8.5)   {\scalebox{0.8}{$k+1$}};
                    \node at (1.5,7.6) {$\ddots$};
    \sqblhash{2}{7};\node at (2.5,6.5) {\scalebox{0.8}{$k'-1$}};
    \sqblhash{2.5}{6};\node at (3,5.5) {\scalebox{0.8}{$k'$}};
    
    \sqgr{3}{5};    \node at (3.5,4.5) {\scalebox{0.8}{$k'+1$}};
    \sqgrhash{2}{5};\node at (2.5,4.5) {\scalebox{0.8}{$k'-1$}};
                    \node at (2,3.5)   {$\iddots$};
    \sqgr{0.5}{3};  \node at (1,2.5)   {\scalebox{0.8}{$k+1$}};
    \sqgr{0}{2};    \node at (0.5,1.5) {\scalebox{0.8}{$k$}};
\end{tikzpicture}
\caption{}\label{fig:slideheap2}
\end{subfigure}
\begin{subfigure}{0.3\textwidth} \centering \vspace{102pt}
\begin{tikzpicture}
    \sqorhash{0}{7};    \node at (0.5,6.5) {\scalebox{0.8}{$k$}};
    \sqorhash{0.5}{6};  \node at (1,5.5)   {\scalebox{0.8}{$k+1$}};
    \sqgr{0}{3};        \node at (0.5,2.5) {\scalebox{0.8}{$k$}};
    \sqblhash{0}{5};    \node at (0.5,4.5) {\scalebox{0.8}{$k$}};
    \sqblhash{0.5}{4};  \node at (1,3.5)   {\scalebox{0.8}{$k+1$}};
    \sqbl{1}{3};        \node at (1.5,2.5) {\scalebox{0.8}{$k+2$}};
                        \node at (2.4,1.7) {$\ddots$};
    \sqbl{2.5}{1};      \node at (3,0.5)   {\scalebox{0.8}{$k'$}};
    \sqgr{3}{0};        \node at (3.5,-0.5){\scalebox{0.8}{$k'+1$}};
\end{tikzpicture}
\caption{}\label{fig:slideheap3.1}
\end{subfigure}
\begin{subfigure}{0.3\textwidth} \centering \vspace{20pt}
\begin{tikzpicture}
    \sqor{0.5}{5};      \node at (1,4.5)   {\scalebox{0.8}{$k+1$}};
    \sqgrcheck{0}{3};   \node at (0.5,2.5) {\scalebox{0.8}{$k$}};
    \sqblcheck{0}{4};   \node at (0.5,3.5) {\scalebox{0.8}{$k$}};
    \sqbl{1}{3};        \node at (1.5,2.5) {\scalebox{0.8}{$k+2$}};
                        \node at (2.4,1.7) {$\ddots$};
    \sqbl{2.5}{1};      \node at (3,0.5)   {\scalebox{0.8}{$k'$}};
    \sqgr{3}{0};        \node at (3.5,-0.5){\scalebox{0.8}{$k'+1$}};
\end{tikzpicture}\caption{}\label{fig:slideheap3.2}
\end{subfigure}
\begin{subfigure}{0.3\textwidth} \centering \vspace{48pt}
\begin{tikzpicture}
    \sqor{0.5}{4};      \node at (1,3.5)   {\scalebox{0.8}{$k+1$}};
    \sqbl{1}{3};        \node at (1.5,2.5) {\scalebox{0.8}{$k+2$}};
                        \node at (2.4,1.7) {$\ddots$};
    \sqbl{2.5}{1};      \node at (3,0.5)   {\scalebox{0.8}{$k'$}};
    \sqgr{3}{0};        \node at (3.5,-0.5){\scalebox{0.8}{$k'+1$}};
\end{tikzpicture}
\caption{}\label{fig:slideheap3.3}
\end{subfigure}
\caption{The heaps for Lemma~\ref{lem:slide}.}\label{fig:slideheaps}
\end{figure}
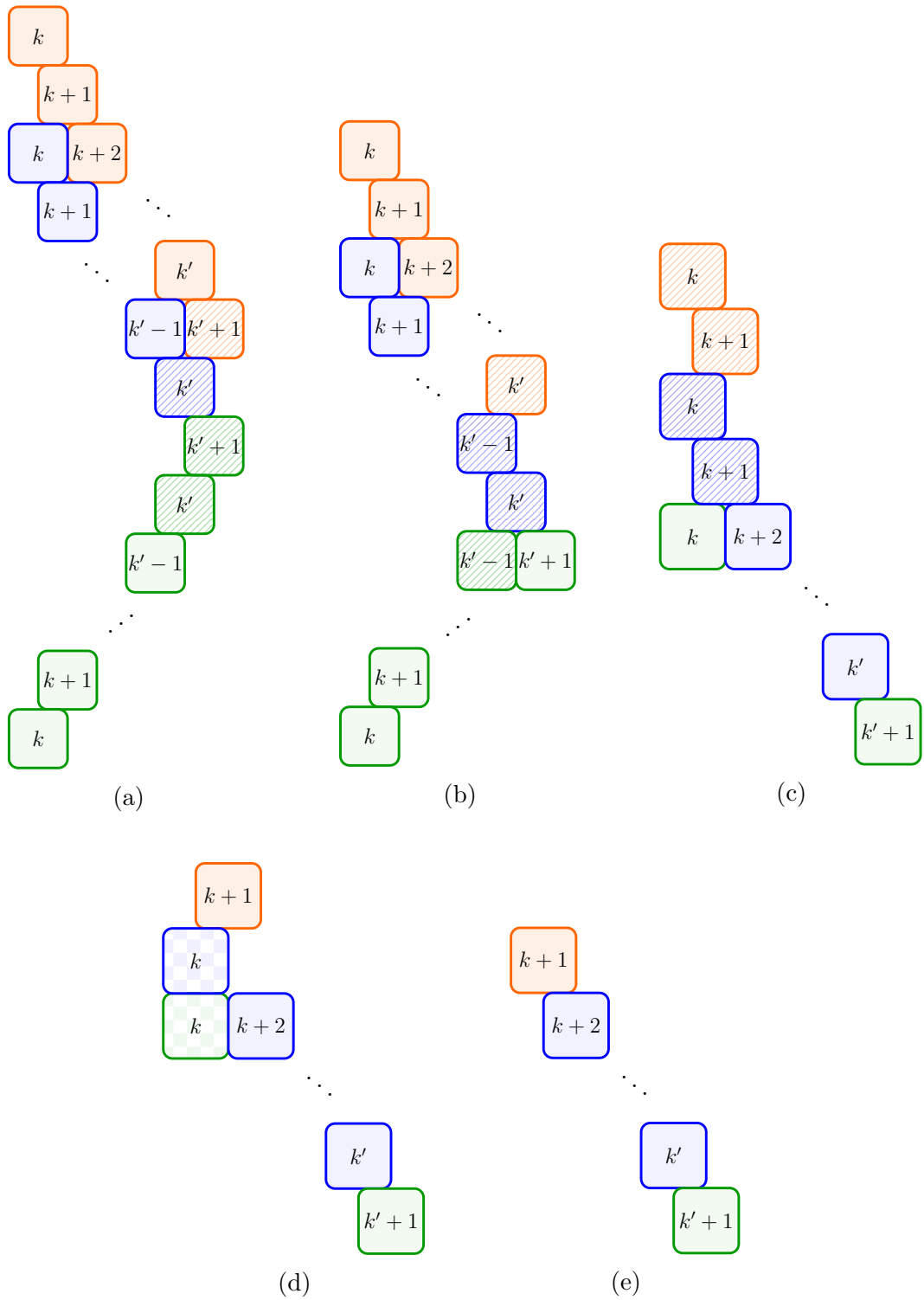
\end{center}

\begin{remark}\label{rem:slide}
Suppose $\hat{H}(w)$ and $\hat{H}(y)$ are slide equivalent by sliding rings of $\hat{H}(w)$ one space to the right to obtain $\hat{H}(y)$.
    Equivalently, we can obtain $\hat{H}(w)$ by sliding rings of $\hat{H}(y)$ one space to the left. In this case, we can obtain $w$ by conjugating $y$ by $x^{-1}$, where $x$ is as given in the proof of Lemma~\ref{lem:slide}.
    It follows that if $w,y \in \CFC(A_n)$ such that $\hat{H}(w)$ is slide equivalent to $\hat{H}(y)$, then $w$ is conjugate to $y$ since we can cyclically shift chunks of $H(w)$ to obtain diagonal chunks and then use Lemma~\ref{lem:slide} as necessary to shift the diagonal chunks. We can then obtain the chunks of $H(y)$ by doing the appropriate cyclic shifts.
\end{remark}

    We now state a lemma that will be useful in the proof of the lemma that follows.

\begin{lemma} \label{lem:boomerang}
Let $w \in W(A_n)$ have reduced expression $\w$. If $H(\w)$ has the heap shown in Figure~\ref{fig:boomerang1} as a convex subheap, then we can replace it with the convex subheap shown in Figure~\ref{fig:boomerang2} to obtain another heap for $w$.
\end{lemma}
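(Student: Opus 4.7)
The strategy is to translate both heap configurations into reduced expressions, demonstrate that they represent the same group element via a sequence of commutations and braid moves, and then invoke Matsumoto's theorem to conclude that the substitution yields another valid reduced expression for $w$ whose heap differs from $H(\w)$ precisely in the specified convex subheap.

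First, by Proposition~\ref{prop:convexsubheapiff} (or by the direct correspondence between heaps and commutation classes of reduced expressions), the convex subheap in Figure~\ref{fig:boomerang1} arises as the heap of a subword $\u$ of some reduced expression $\w = \u_1 \u \u_2$ for $w$. I would read $\u$ off the subheap top-to-bottom and similarly record a reduced expression $\u'$ corresponding to the configuration in Figure~\ref{fig:boomerang2}.

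Second, I would exhibit an explicit sequence of commutations (using $s_i s_j = s_j s_i$ when $\abs{i-j} \geq 2$) together with braid relations $s_i s_{i+1} s_i = s_{i+1} s_i s_{i+1}$ that transforms $\u$ into $\u'$. Given the suggestive name \emph{boomerang} and the ambient setting of type $A$, the shape being flipped is almost certainly a triangular ``mountain'' configuration whose orientation is being reversed, so the argument reduces at its core to the braid relation. If the boomerang contains more than the minimal number of blocks, an induction on the size of the shape will likely be required: the base case is a single application of the braid relation, and the inductive step peels off outer layers by commutations before applying the inductive hypothesis to a smaller interior boomerang.

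Third, once $\u$ and $\u'$ have been connected by such a chain, Matsumoto's theorem (Theorem~\ref{thm:matsumoto}) yields $\u = \u'$ as elements of $W(A_n)$. Consequently $\w' := \u_1 \u' \u_2$ is another reduced expression for $w$, and $H(\w')$ contains the configuration in Figure~\ref{fig:boomerang2} as a convex subheap in place of the original. The main obstacle is the explicit combinatorial bookkeeping of the move sequence from $\u$ to $\u'$, particularly if the boomerang has nontrivial size; once this is established, convexity of the original subheap guarantees that the surrounding blocks in $H(\w)$ are untouched, so the replacement is well-defined.
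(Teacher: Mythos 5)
Your proposal matches the paper's proof in essence: the paper likewise reads the boomerang off as the palindromic subword $(k)(k+1)\cdots(k')(k'+1)(k')\cdots(k+1)(k)$, asserts that braid moves transform it into the reversed palindrome $(k'+1)(k')\cdots(k)\cdots(k')(k'+1)$, and then performs the replacement inside the convex subheap (noting only that surrounding blocks may need vertical shifting). One small remark: once you exhibit the explicit chain of commutations and braid relations, equality of $\u$ and $\u'$ in $W$ is immediate from the defining relations, so the appeal to Matsumoto's theorem is superfluous (it would only be needed for the converse direction).
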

\begin{proof} It follows from the relations in $W(A_n)$ that the subword \begin{equation} \label{eq:boom1} (k)(k+1) \cdots (k')(k'+1)(k') \cdots (k+1)(k) \end{equation} can be transformed into \begin{equation} \label{eq:boom2} (k'+1)(k') \cdots (k+1)(k)(k+1) \cdots (k')(k'+1), \end{equation} where $k'=k+m$, by performing a sequence of braid moves.
    In the heap, the subword in (\ref{eq:boom1}) corresponds to the convex subheap shown in Figure~\ref{fig:boomerang1} and the subword in (\ref{eq:boom2}) corresponds to the subheap shown in Figure~\ref{fig:boomerang2}.
    Suppose the convex subheap in Figure~\ref{fig:boomerang1} appears in $H(\w)$. Then applying the braid moves to the blocks corresponding to the subword in (\ref{eq:boom1}) to obtain blocks corresponding to the subword in (\ref{eq:boom2}), we get the convex subheap in Figure~\ref{fig:boomerang2}.
\end{proof}

\begin{center}
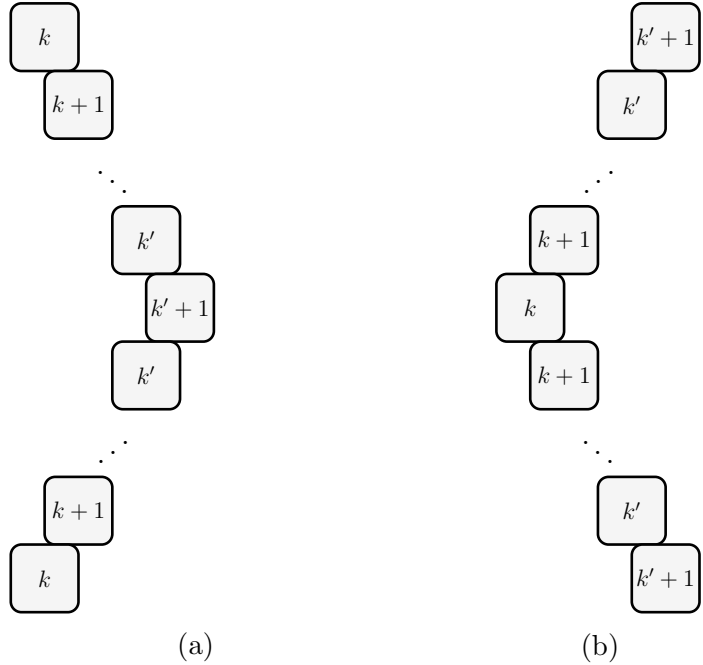
\begin{figure}[htpb] \centering
\begin{tabular}{cc}
\begin{subfigure}{0.3\textwidth}
\begin{tikzpicture}[scale=0.9]
\sq{0}{9};    \node at (0.5,8.5) {\scalebox{0.75}{$k$}};
\sq{0.5}{8};  \node at (1,7.5)   {\scalebox{0.75}{$k+1$}};
              \node at (1.5,6.5) {$\ddots$};
\sq{1.5}{6};  \node at (2,5.5)   {\scalebox{0.75}{$k'$}};
\sq{2}{5};    \node at (2.5,4.5) {\scalebox{0.75}{$k'+1$}};
\sq{1.5}{4};  \node at (2,3.5)   {\scalebox{0.75}{$k'$}};
              \node at (1.5,2.5) {$\iddots$};
\sq{0.5}{2};  \node at (1,1.5)   {\scalebox{0.75}{$k+1$}};
\sq{0}{1};    \node at (0.5,0.5) {\scalebox{0.75}{$k$}};
\end{tikzpicture}
\caption{}\label{fig:boomerang1}
\end{subfigure} &
\begin{subfigure}{0.3\textwidth} \centering
\begin{tikzpicture}[scale=0.9]
\sq{7}{9};    \node at (7.5,8.5) {\scalebox{0.75}{$k'+1$}};
\sq{6.5}{8};  \node at (7,7.5)   {\scalebox{0.75}{$k'$}};
              \node at (6.5,6.5) {$\iddots$};
\sq{5.5}{6};  \node at (6,5.5)   {\scalebox{0.75}{$k+1$}};
\sq{5}{5};    \node at (5.5,4.5) {\scalebox{0.75}{$k$}};
\sq{5.5}{4};  \node at (6,3.5)   {\scalebox{0.75}{$k+1$}};
              \node at (6.5,2.5) {$\ddots$};
\sq{6.5}{2};  \node at (7,1.5)   {\scalebox{0.75}{$k'$}};
\sq{7}{1};    \node at (7.5,0.5) {\scalebox{0.75}{$k'+1$}};
\end{tikzpicture}
\caption{}\label{fig:boomerang2}
\end{subfigure}
\end{tabular}
\caption{The equivalent convex subheaps for Lemma~\ref{lem:boomerang}, where $k'=k+m$.} \label{fig:boomerang}
\end{figure} \end{center}

    Note that any blocks that occur above or below the convex subheap in Figure~\ref{fig:boomerang1} must be shifted vertically as necessary when replacing the first convex subheap with the one in Figure~\ref{fig:boomerang2}.

\begin{example}\label{ex:boomerang} Let $w \in W(A_5)$ have expression $\w = 3524343213$. Then a heap for $\w$ is shown in Figure~\ref{fig:boomerangex1} and contains a convex subheap as in Lemma~\ref{lem:boomerang}, highlighted in \textcolor{ggreen}{green}. Applying Lemma~\ref{lem:boomerang} to $H(\w)$, we get the heap in Figure~\ref{fig:boomerangex2}.
\begin{center} \begin{figure}[H] \centering
\begin{subfigure}{0.4\textwidth} \centering
\begin{tikzpicture}[scale=0.85]
    \sq{0}{8};       \node at (0.5,7.5) {\footnotesize $1$};
    \sq{1}{8};       \node at (1.5,7.5) {\footnotesize $3$};
    \sq{2}{6};       \node at (2.5,5.5) {\footnotesize $5$};
    \sqgr{0.5}{7};   \node at (1,6.5)   {\footnotesize $2$};
    \sqgr{1}{6};     \node at (1.5,5.5) {\footnotesize $3$};
    \sqgr{1.5}{5};   \node at (2,4.5)   {\footnotesize $4$};
    \sqgr{1}{4};     \node at (1.5,3.5) {\footnotesize $3$};
    \sqgr{0.5}{3};   \node at (1,2.5)   {\footnotesize $2$};
    \sq{1}{2};       \node at (1.5,1.5) {\footnotesize $3$};
\end{tikzpicture}
\caption{}\label{fig:boomerangex1}
\end{subfigure}
\begin{subfigure}{0.4\textwidth} \centering
\begin{tikzpicture}[scale=0.85]
    \sq{0}{6};       \node at (0.5,5.5) {\footnotesize $1$};
    \sq{1}{8};       \node at (1.5,7.5) {\footnotesize $3$};
    \sq{2}{8};       \node at (2.5,7.5) {\footnotesize $5$};
    \sqgr{1.5}{7};   \node at (2,6.5)   {\footnotesize $4$};
    \sqgr{1}{6};     \node at (1.5,5.5) {\footnotesize $3$};
    \sqgr{0.5}{5};   \node at (1,4.5)   {\footnotesize $2$};
    \sqgr{1}{4};     \node at (1.5,3.5) {\footnotesize $3$};
    \sqgr{1.5}{3};   \node at (2,2.5)   {\footnotesize $4$};
    \sq{1}{2};       \node at (1.5,1.5) {\footnotesize $3$};
\end{tikzpicture}
\caption{}\label{fig:boomerangex2}
\end{subfigure}
\caption{The heaps for Example~\ref{ex:boomerang}.}\label{}
\end{figure}
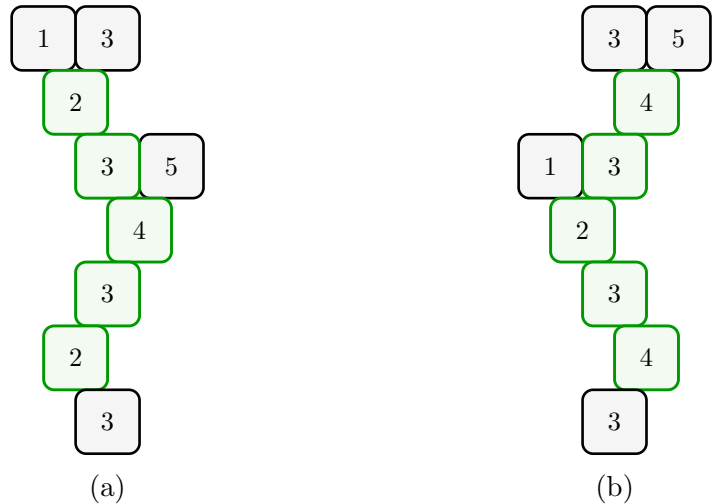 \end{center}
\end{example}

    The following example motivates the proof of Lemma~\ref{lem:swap}.

\begin{example}\label{ex:swap} Let $w,y \in W(A_6)$ have reduced expressions $\w = 12356$ and $\y = 12456$. Then, $w$ and $y$ are both CFC, so there is a unique heap for each.
    Then $\hat{H}(w)$ and $\hat{H}(y)$ are shown in Figures~\ref{fig:permexheap0.1} and~\ref{fig:permexheap0.2}, respectively. Notice that each has two rings, and, moreover, $\hat{H}(w)$ is ring equivalent to $\hat{H}(y)$.

\begin{center}
\begin{figure}[htpb] \centering
\begin{subfigure}{0.3\textwidth} \centering
\begin{tabular}{cc}
\begin{tikzpicture}[scale=0.85]
\draw[line width=1.5pt,->] (-0.5,3)--(3.75,3);
    \sq{0}{3};   \node at (0.5,2.5) {\footnotesize $1$};
    \sq{0.5}{2}; \node at (1,1.5)   {\footnotesize $2$};
    \sq{1}{1};   \node at (1.5,0.5) {\footnotesize $3$};
    \sq{2}{1};   \node at (2.5,0.5) {\footnotesize $5$};
    \sq{2.5}{0}; \node at (3,-0.5)  {\footnotesize $6$};
\draw[line width=1.5pt,->] (-0.5,-1)--(3.75,-1);
\end{tikzpicture}
\end{tabular}
\caption{}\label{fig:permexheap0.1}
\end{subfigure}
\begin{subfigure}{0.3\textwidth} \centering
\begin{tabular}{cc}
\begin{tikzpicture}[scale=0.85]
\draw[line width=1.5pt,->] (-0.5,3)--(3.75,3);
    \sq{0}{3};   \node at (0.5,2.5) {\footnotesize $1$};
    \sq{0.5}{2}; \node at (1,1.5)   {\footnotesize $2$};
    \sq{1.5}{2}; \node at (2,1.5)   {\footnotesize $4$};
    \sq{2}{1};   \node at (2.5,0.5) {\footnotesize $5$};
    \sq{2.5}{0}; \node at (3,-0.5)  {\footnotesize $6$};
\draw[line width=1.5pt,->] (-0.5,-1)--(3.75,-1);
\end{tikzpicture}
\end{tabular}
\caption{}\label{fig:permexheap0.2}
\end{subfigure}
\caption{The cylindrical heaps for Example~\ref{ex:swap}.}
\end{figure}
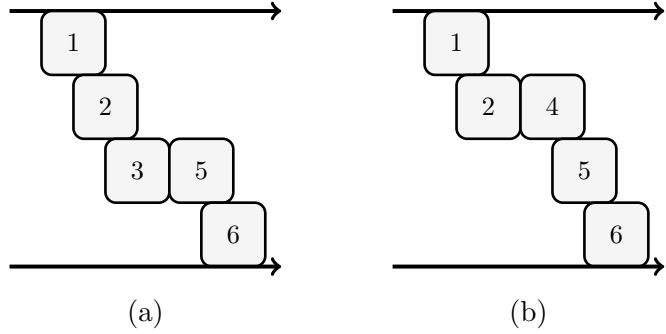 \end{center}
    
    We claim that $w$ and $y$ are conjugate.
    Let $x$ have reduced expression \begin{equation} \x = 345623451234 = (3456)(2345)(1234). \end{equation}
    Then the heap $H({\x}{\w}{\x^{-1}})$ is shown in Figure~\ref{fig:permexheap1.1}. 
    By applying Lemma~\ref{lem:boomerang} to $\textcolor{blue}{123} \textcolor{ggreen}{4321}$, denoted in $H({\x}{\w}{\x^{-1}})$ by hatched blocks, we obtain the heap in Figure~\ref{fig:permexheap1.2}.
    Now, we apply Lemma~\ref{lem:stst} to the extra long $54$-chain in the heap in Figure~\ref{fig:permexheap1.2}, denoted by hatched blocks.
    Then, we have an extra long $43$-chain to which we can apply Lemma~\ref{lem:stst}. Continuing in this manner, we get the heap shown in Figure~\ref{fig:permexheap2}.
    
    We can apply Lemma~\ref{lem:boomerang} to $\textcolor{ggreen}{2345432}$ (hatched) to get the heap shown in Figure~\ref{fig:permexheap3}.
    Now we apply Lemma~\ref{lem:stst} to the extra long $65$-chain (hatched). Then, we have an extra long $54$-chain to which we can apply Lemma~\ref{lem:stst}.
    Continuing, we apply Lemma~\ref{lem:stst} to the extra long $43$-chain and the extra long $32$-chain and we get the heap shown in Figure~\ref{fig:permexheap4}.
    Finally, we apply Lemma~\ref{lem:boomerang} to $\textcolor{ggreen}{3456543}$ (hatched) and get the heap shown in Figure~\ref{fig:permexheap5}.
    We can cancel the adjacent 6 blocks (checked), followed by adjacent 5 blocks, adjacent 4 blocks, and adjacent 3 blocks. After all the cancellation, we get the heap shown in Figure~\ref{fig:permexheap6}, which yields the desired result.
\end{example}

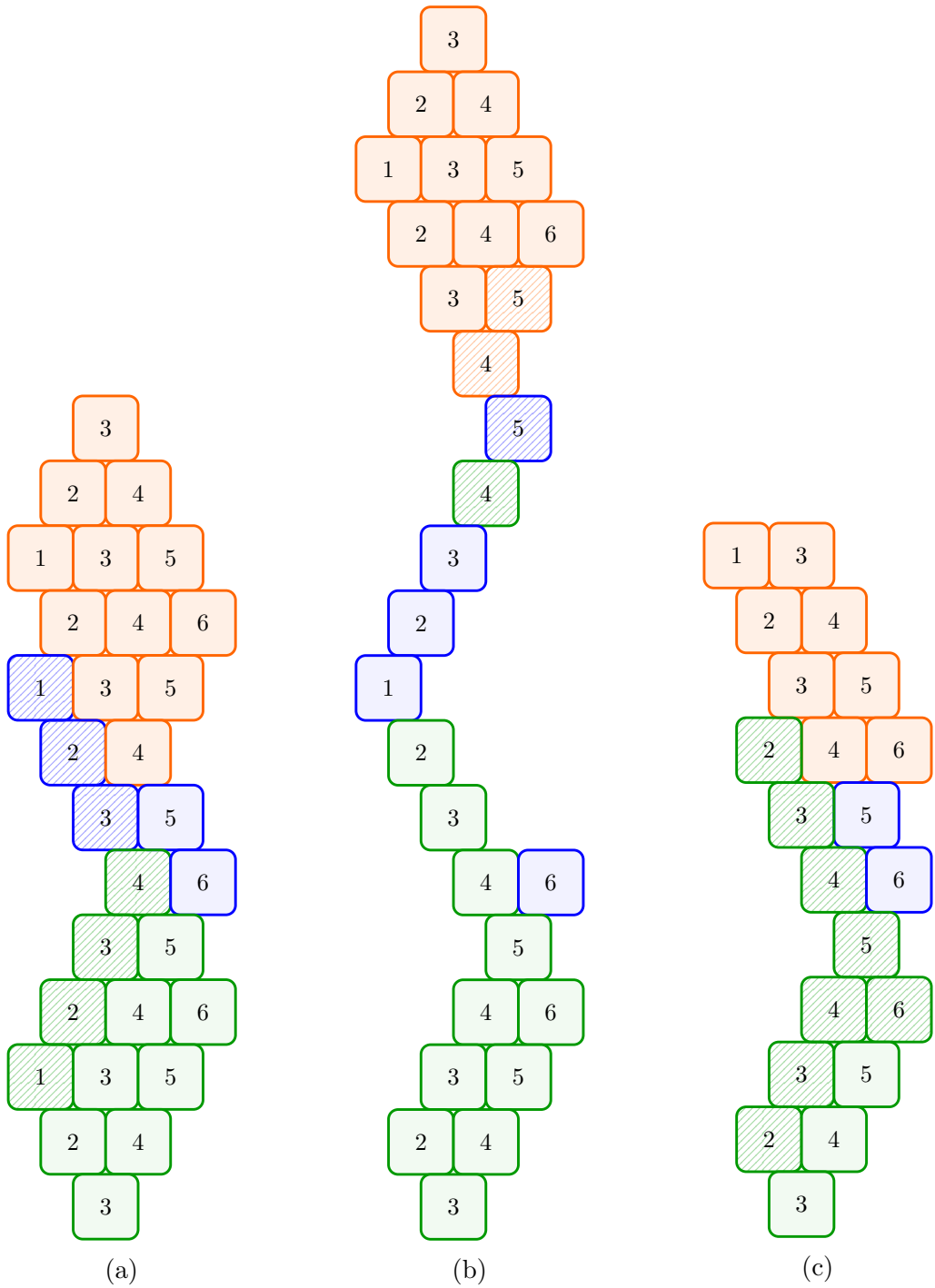
\begin{figure}[htpb] \centering
\begin{subfigure}{0.3\textwidth} \centering \vspace{162pt}
\begin{tikzpicture}[scale=0.95]
    \sqor{1}{12};    \node at (1.5,11.5) {\footnotesize $3$};
    \sqor{0.5}{11};  \node at (1,10.5)   {\footnotesize $2$};
    \sqor{1.5}{11};  \node at (2,10.5)   {\footnotesize $4$};
    \sqor{0}{10};    \node at (0.5,9.5)  {\footnotesize $1$};
    \sqor{1}{10};    \node at (1.5,9.5)  {\footnotesize $3$};
    \sqor{2}{10};    \node at (2.5,9.5)  {\footnotesize $5$};
    \sqor{0.5}{9};   \node at (1,8.5)    {\footnotesize $2$};
    \sqor{1.5}{9};   \node at (2,8.5)    {\footnotesize $4$};
    \sqor{2.5}{9};   \node at (3,8.5)    {\footnotesize $6$};
    \sqblhash{0}{8}; \node at (0.5,7.5)  {\footnotesize $1$};
    \sqor{1}{8};     \node at (1.5,7.5)  {\footnotesize $3$};
    \sqor{2}{8};     \node at (2.5,7.5)  {\footnotesize $5$};
    \sqblhash{0.5}{7};\node at (1,6.5)   {\footnotesize $2$};
    \sqor{1.5}{7};   \node at (2,6.5)    {\footnotesize $4$};
    \sqblhash{1}{6}; \node at (1.5,5.5)  {\footnotesize $3$};
    \sqbl{2}{6};     \node at (2.5,5.5)  {\footnotesize $5$};
    \sqgrhash{1.5}{5};\node at (2,4.5)   {\footnotesize $4$};
    \sqbl{2.5}{5};   \node at (3,4.5)    {\footnotesize $6$};
    \sqgrhash{1}{4}; \node at (1.5,3.5)  {\footnotesize $3$};
    \sqgr{2}{4};     \node at (2.5,3.5)  {\footnotesize $5$};
    \sqgrhash{0.5}{3};\node at (1,2.5)   {\footnotesize $2$};
    \sqgr{1.5}{3};   \node at (2,2.5)    {\footnotesize $4$};
    \sqgr{2.5}{3};   \node at (3,2.5)    {\footnotesize $6$};
    \sqgrhash{0}{2}; \node at (0.5,1.5)  {\footnotesize $1$};
    \sqgr{1}{2};     \node at (1.5,1.5)  {\footnotesize $3$};
    \sqgr{2}{2};     \node at (2.5,1.5)  {\footnotesize $5$};
    \sqgr{0.5}{1};   \node at (1,0.5)    {\footnotesize $2$};
    \sqgr{1.5}{1};   \node at (2,0.5)    {\footnotesize $4$};
    \sqgr{1}{0};     \node at (1.5,-0.5) {\footnotesize $3$};
\end{tikzpicture}
\caption{}\label{fig:permexheap1.1}
\end{subfigure}
\begin{subfigure}{0.3\textwidth} \centering
\begin{tikzpicture}[scale=0.95]
    \sqor{1}{12};    \node at (1.5,11.5) {\footnotesize $3$};
    \sqor{0.5}{11};  \node at (1,10.5)   {\footnotesize $2$};
    \sqor{1.5}{11};  \node at (2,10.5)   {\footnotesize $4$};
    \sqor{0}{10};    \node at (0.5,9.5)  {\footnotesize $1$};
    \sqor{1}{10};    \node at (1.5,9.5)  {\footnotesize $3$};
    \sqor{2}{10};    \node at (2.5,9.5)  {\footnotesize $5$};
    \sqor{0.5}{9};   \node at (1,8.5)    {\footnotesize $2$};
    \sqor{1.5}{9};   \node at (2,8.5)    {\footnotesize $4$};
    \sqor{2.5}{9};   \node at (3,8.5)    {\footnotesize $6$};
    \sqor{1}{8};     \node at (1.5,7.5)  {\footnotesize $3$};
    \sqorhash{2}{8}; \node at (2.5,7.5)  {\footnotesize $5$};
    \sqorhash{1.5}{7};\node at (2,6.5)   {\footnotesize $4$};
    \sqblhash{2}{6}; \node at (2.5,5.5)  {\footnotesize $5$};
    \sqgrhash{1.5}{5};\node at (2,4.5)   {\footnotesize $4$};
    \sqbl{1}{4};     \node at (1.5,3.5)  {\footnotesize $3$};
    \sqbl{0.5}{3};   \node at (1,2.5)    {\footnotesize $2$};
    \sqbl{0}{2};     \node at (0.5,1.5)  {\footnotesize $1$};
    \sqgr{0.5}{1};   \node at (1,0.5)    {\footnotesize $2$};
    \sqgr{1}{0};     \node at (1.5,-0.5) {\footnotesize $3$};
    \sqgr{1.5}{-1};  \node at (2,-1.5)   {\footnotesize $4$};
    \sqbl{2.5}{-1};  \node at (3,-1.5)   {\footnotesize $6$};
    \sqgr{2}{-2};    \node at (2.5,-2.5) {\footnotesize $5$};
    \sqgr{1.5}{-3};  \node at (2,-3.5)   {\footnotesize $4$};
    \sqgr{2.5}{-3};  \node at (3,-3.5)   {\footnotesize $6$};
    \sqgr{1}{-4};    \node at (1.5,-4.5) {\footnotesize $3$};
    \sqgr{2}{-4};    \node at (2.5,-4.5) {\footnotesize $5$};
    \sqgr{0.5}{-5};  \node at (1,-5.5)   {\footnotesize $2$};
    \sqgr{1.5}{-5};  \node at (2,-5.5)   {\footnotesize $4$};
    \sqgr{1}{-6};    \node at (1.5,-6.5) {\footnotesize $3$};
\end{tikzpicture}
\caption{}\label{fig:permexheap1.2}
\end{subfigure}
\begin{subfigure}{0.3\textwidth} \centering \vspace{214pt}
\begin{tikzpicture}[scale=0.95]
    \sqor{1}{12};    \node at (1.5,11.5) {\footnotesize $3$};
    \sqor{1.5}{11};  \node at (2,10.5)   {\footnotesize $4$};
    \sqor{0}{12};    \node at (0.5,11.5) {\footnotesize $1$};
    \sqor{2}{10};    \node at (2.5,9.5)  {\footnotesize $5$};
    \sqor{0.5}{11};  \node at (1,10.5)   {\footnotesize $2$};
    \sqor{2.5}{9};   \node at (3,8.5)    {\footnotesize $6$};
    \sqor{1}{10};    \node at (1.5,9.5)  {\footnotesize $3$};
    \sqor{1.5}{9};   \node at (2,8.5)    {\footnotesize $4$};
    \sqbl{2}{8};     \node at (2.5,7.5)  {\footnotesize $5$};
    \sqgrhash{0.5}{9};   \node at (1,8.5)    {\footnotesize $2$};
    \sqgrhash{1}{8};     \node at (1.5,7.5)  {\footnotesize $3$};
    \sqgrhash{1.5}{7};   \node at (2,6.5)    {\footnotesize $4$};
    \sqbl{2.5}{7};       \node at (3,6.5)    {\footnotesize $6$};
    \sqgrhash{2}{6};     \node at (2.5,5.5)  {\footnotesize $5$};
    \sqgrhash{1.5}{5};   \node at (2,4.5)    {\footnotesize $4$};
    \sqgrhash{2.5}{5};   \node at (3,4.5)    {\footnotesize $6$};
    \sqgrhash{1}{4};     \node at (1.5,3.5)  {\footnotesize $3$};
    \sqgr{2}{4};         \node at (2.5,3.5)  {\footnotesize $5$};
    \sqgrhash{0.5}{3};   \node at (1,2.5)    {\footnotesize $2$};
    \sqgr{1.5}{3};       \node at (2,2.5)    {\footnotesize $4$};
    \sqgr{1}{2};         \node at (1.5,1.5)  {\footnotesize $3$};
\end{tikzpicture}
\caption{}\label{fig:permexheap2}
\end{subfigure}
\caption{The heaps for Example~\ref{ex:swap}.}\label{}
\end{figure}
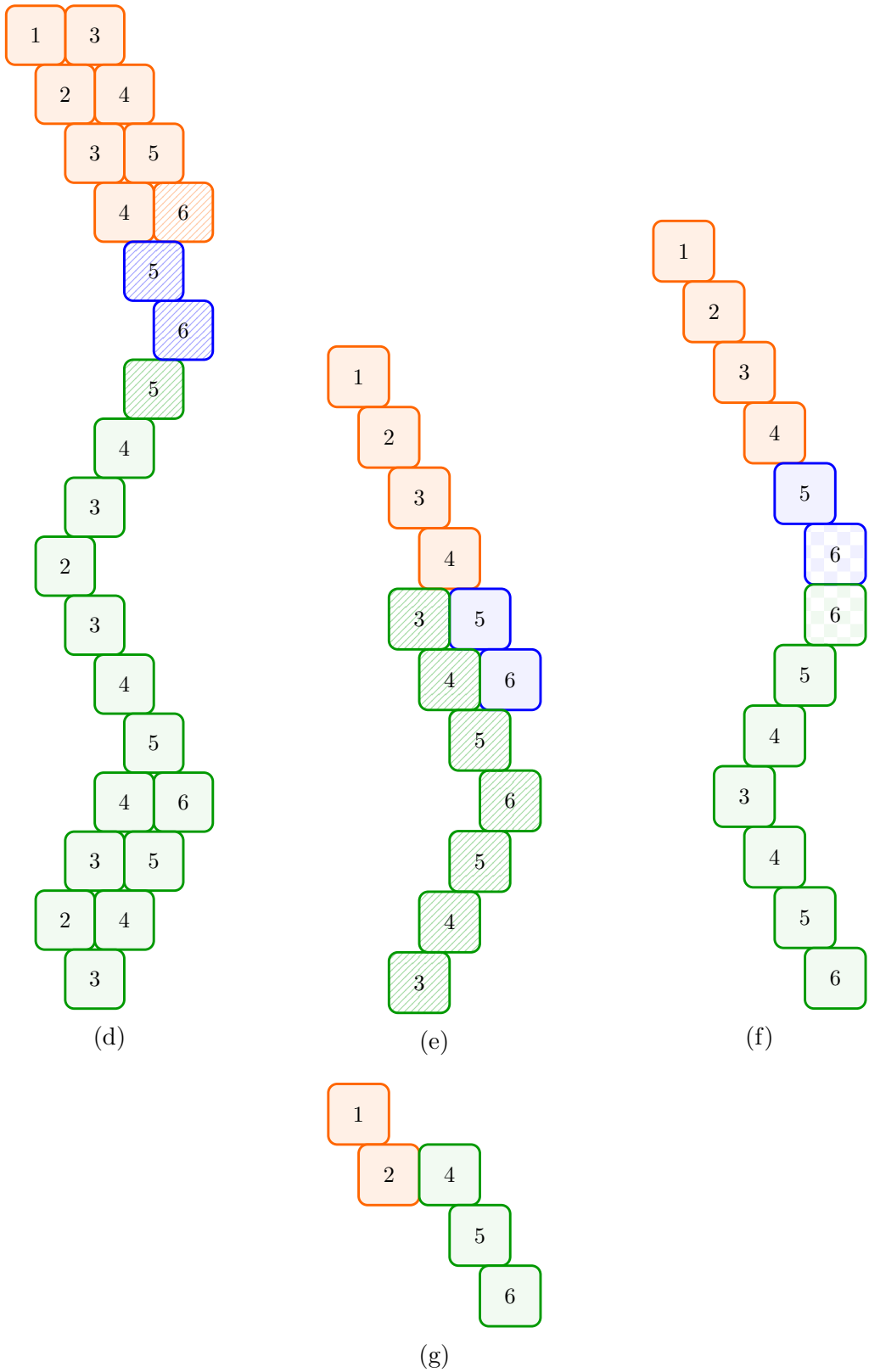
\begin{figure}[htpb] \centering
\ContinuedFloat
\begin{subfigure}{0.3\textwidth} \centering \vspace{-46pt}
\begin{tikzpicture}[scale=0.925]
    \sqor{1}{12};     \node at (1.5,11.5) {\footnotesize $3$};
    \sqor{1.5}{11};   \node at (2,10.5)   {\footnotesize $4$};
    \sqor{0}{12};     \node at (0.5,11.5) {\footnotesize $1$};
    \sqor{2}{10};     \node at (2.5,9.5)  {\footnotesize $5$};
    \sqor{0.5}{11};   \node at (1,10.5)   {\footnotesize $2$};
    \sqorhash{2.5}{9};\node at (3,8.5)    {\footnotesize $6$};
    \sqor{1}{10};     \node at (1.5,9.5)  {\footnotesize $3$};
    \sqor{1.5}{9};    \node at (2,8.5)    {\footnotesize $4$};
    \sqblhash{2}{8};  \node at (2.5,7.5)  {\footnotesize $5$};
    
    \sqgr{0.5}{3};    \node at (1,2.5)    {\footnotesize $2$};
    \sqgr{1}{4};      \node at (1.5,3.5)  {\footnotesize $3$};
    \sqgr{1.5}{5};    \node at (2,4.5)    {\footnotesize $4$};
    \sqgrhash{2}{6};  \node at (2.5,5.5)  {\footnotesize $5$};
    \sqblhash{2.5}{7};\node at (3,6.5)    {\footnotesize $6$};
    \sqgr{1}{2};      \node at (1.5,1.5)  {\footnotesize $3$};
    \sqgr{1.5}{1};    \node at (2,0.5)    {\footnotesize $4$};
    \sqgr{2}{0};      \node at (2.5,-0.5) {\footnotesize $5$};
    \sqgr{2.5}{-1};   \node at (3,-1.5)   {\footnotesize $6$};
    \sqgr{1.5}{-1};   \node at (2,-1.5)   {\footnotesize $4$};
    \sqgr{1}{-2};     \node at (1.5,-2.5) {\footnotesize $3$};
    \sqgr{2}{-2};     \node at (2.5,-2.5) {\footnotesize $5$};
    \sqgr{0.5}{-3};   \node at (1,-3.5)   {\footnotesize $2$};
    \sqgr{1.5}{-3};   \node at (2,-3.5)   {\footnotesize $4$};
    \sqgr{1}{-4};     \node at (1.5,-4.5) {\footnotesize $3$};
\end{tikzpicture}
\caption{}\label{fig:permexheap3}
\end{subfigure}
\begin{subfigure}{0.3\textwidth} \centering \vspace{108pt}
\begin{tikzpicture}[scale=0.95]
    \sqor{0}{10};       \node at (0.5,9.5)  {\footnotesize $1$};
    \sqor{0.5}{9};      \node at (1,8.5)    {\footnotesize $2$};
    \sqor{1}{8};        \node at (1.5,7.5)  {\footnotesize $3$};
    \sqor{1.5}{7};      \node at (2,6.5)    {\footnotesize $4$};
    \sqbl{2}{6};        \node at (2.5,5.5)  {\footnotesize $5$};
    \sqbl{2.5}{5};      \node at (3,4.5)    {\footnotesize $6$};
    \sqgrhash{1}{6};    \node at (1.5,5.5)  {\footnotesize $3$};
    \sqgrhash{1.5}{5};  \node at (2,4.5)    {\footnotesize $4$};
    \sqgrhash{2}{4};    \node at (2.5,3.5)  {\footnotesize $5$};
    \sqgrhash{2.5}{3};  \node at (3,2.5)    {\footnotesize $6$};
    \sqgrhash{2}{2};    \node at (2.5,1.5)  {\footnotesize $5$};
    \sqgrhash{1.5}{1};  \node at (2,0.5)    {\footnotesize $4$};
    \sqgrhash{1}{0};    \node at (1.5,-0.5) {\footnotesize $3$};
\end{tikzpicture}
\caption{}\label{fig:permexheap4}
\end{subfigure}
\begin{subfigure}{0.3\textwidth} \centering \vspace{50pt}
\begin{tikzpicture}[scale=0.95]
    \sqor{0}{10};       \node at (0.5,9.5)  {\footnotesize $1$};
    \sqor{0.5}{9};      \node at (1,8.5)    {\footnotesize $2$};
    \sqor{1}{8};        \node at (1.5,7.5)  {\footnotesize $3$};
    \sqor{1.5}{7};      \node at (2,6.5)    {\footnotesize $4$};
    \sqbl{2}{6};        \node at (2.5,5.5)  {\footnotesize $5$};
    \sqblcheck{2.5}{5}; \node at (3,4.5)    {\footnotesize $6$};
    \sqgr{1}{1};        \node at (1.5,0.5)  {\footnotesize $3$};
    \sqgr{1.5}{2};      \node at (2,1.5)    {\footnotesize $4$};
    \sqgr{2}{3};        \node at (2.5,2.5)  {\footnotesize $5$};
    \sqgrcheck{2.5}{4}; \node at (3,3.5)    {\footnotesize $6$};
    \sqgr{2}{-1};       \node at (2.5,-1.5) {\footnotesize $5$};
    \sqgr{1.5}{0};      \node at (2,-0.5)   {\footnotesize $4$};
    \sqgr{2.5}{-2};     \node at (3,-2.5)   {\footnotesize $6$};
\end{tikzpicture}
\caption{}\label{fig:permexheap5}
\end{subfigure}
\begin{subfigure}{\textwidth} \centering \vspace{10pt}
\begin{tikzpicture}[scale=0.95]
    \sqor{0}{6};       \node at (0.5,5.5)  {\footnotesize $1$};
    \sqor{0.5}{5};     \node at (1,4.5)    {\footnotesize $2$};
    \sqgr{1.5}{5};     \node at (2,4.5)    {\footnotesize $4$};
    \sqgr{2}{4};       \node at (2.5,3.5)  {\footnotesize $5$};
    \sqgr{2.5}{3};     \node at (3,2.5)    {\footnotesize $6$};
\end{tikzpicture}
\caption{}\label{fig:permexheap6}
\end{subfigure}
\caption{The heaps for Example~\ref{ex:swap} (continued).}\label{fig:permexheaps}
\end{figure}

    We can generalize the technique used in the previous example to prove the following lemma, which allows us to permute two adjacent diagonal chunks.

\begin{lemma}\label{lem:swap}
Let $w,y \in \CFC(A_n)$ such that $H(w)$ and $H(y)$ are simple, each consisting of two chunks as in Figure~\ref{fig:swap1}, where the chunk starting at 1 has size $k$ and the adjacent chunk has size $m$ and $k'=k+m$. Then $w$ and $y$ are conjugate.
\end{lemma}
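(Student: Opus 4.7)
My plan is to generalize the explicit conjugation computed in Example~\ref{ex:swap} to arbitrary chunk sizes. Without loss of generality, pick the reduced expressions
\[ \w = s_1 s_2 \cdots s_k \, s_{k+2} s_{k+3} \cdots s_{k+m+1} \quad\text{and}\quad \y = s_1 s_2 \cdots s_m \, s_{m+2} s_{m+3} \cdots s_{k+m+1}, \]
so that $H(w)$ and $H(y)$ each consist of two diagonal chunks of sizes $k$ and $m$, merely occupying different horizontal positions on the heap. I will exhibit an explicit $x \in W(A_n)$ with $xwx^{-1} = y$ as an equality in $W(A_n)$, which by definition establishes the conjugacy.

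The conjugator is the product
\[ x \,:=\, \prod_{i=0}^{k-1} \bigl(s_{k-i}\, s_{k-i+1} \cdots s_{(k+m+1)-i}\bigr), \]
read left-to-right. When $k=3$ and $m=2$ this specializes to $s_3 s_4 s_5 s_6 \cdot s_2 s_3 s_4 s_5 \cdot s_1 s_2 s_3 s_4$, which is precisely the conjugator used in Example~\ref{ex:swap}. I would then form the heap $H(xwx^{-1})$ as the vertical concatenation of three colored layers (the heap of $x$ on top, the heap of $w$ in the middle, and the heap of $x^{-1}$ on the bottom), as in Figure~\ref{fig:permexheap1.1}, and reduce it to $H(y)$ via a sequence of elementary moves.

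The reduction alternates two kinds of moves, as illustrated in the example. Step~(a): locate a convex subheap of the form treated in Lemma~\ref{lem:boomerang} along the boundary between the $w$-layer and the $x^{-1}$-layer, and flip it via the underlying braid sequence. Step~(b): the flip creates a cascade of extra long $s_is_j$-chains, and applying Lemma~\ref{lem:stst} to each in turn annihilates one pair of blocks per application. I would set this up as an induction on the layer index $i = 0, 1, \ldots, k-1$: after $i$ rounds of (flip, then cancel), the top $i$ layers of $x$ together with the matching bottom $i$ layers of $x^{-1}$ have cancelled, and the $w$-portion of the heap has been progressively transformed toward $H(y)$. After $k$ rounds the heap is precisely $H(y)$.

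The main obstacle will be the combinatorial bookkeeping. At each round one must verify that the convex subheap required by Lemma~\ref{lem:boomerang} actually appears with every intermediate position filled, and that each extra long $s_is_j$-chain invoked by Lemma~\ref{lem:stst} is itself convex with the requisite empty position above or below. Tracking the vertical coordinates of blocks as they shift under the cascading cancellations makes both checks routine in principle but notationally cumbersome; the cleanest exposition is probably to display a single representative reduction diagram (as in Figures~\ref{fig:permexheap1.1}--\ref{fig:permexheap6}) and extract the inductive structure from it.
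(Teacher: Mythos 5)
Your overall strategy --- build the layered heap $H(\x\w\x^{-1})$, flip boundary subheaps with Lemma~\ref{lem:boomerang}, and cancel the resulting extra long chains with Lemma~\ref{lem:stst}, organized as an induction on rounds --- is exactly the route the paper takes. The gap is in your conjugator. You propose $x = \prod_{i=0}^{k-1}\bigl(s_{k-i}s_{k-i+1}\cdots s_{k+m+1-i}\bigr)$, i.e., $k$ ascending runs of $m+2$ generators each; the paper's conjugator is $\prod_{j=0}^{m}\bigl(s_{m+1-j}s_{m+2-j}\cdots s_{k+m+1-j}\bigr)$, i.e., $m+1$ ascending runs of $k+1$ generators each. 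These two words coincide precisely when $k=m+1$, which happens to hold in Example~\ref{ex:swap} ($k=3$, $m=2$), so your formula reproduces the example but does not generalize it. Concretely, take $k=3$, $m=1$, so $\w = s_1s_2s_3s_5$ and $\y = s_1s_3s_4s_5$; in $S_6$ these are $(1\,2\,3\,4)(5\,6)$ and $(1\,2)(3\,4\,5\,6)$. Your $x = s_3s_4s_5\,s_2s_3s_4\,s_1s_2s_3$ is the permutation $[4\,5\,6\,1\,2\,3]$, and then $xwx^{-1} = (1\,4\,5\,6)(2\,3) \neq y$, so no sequence of heap reductions of $H(\x\w\x^{-1})$ can terminate at $H(y)$. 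The correct conjugator here is $s_2s_3s_4s_5\,s_1s_2s_3s_4 = [3\,4\,5\,6\,1\,2]$, which does carry $w$ to $y$.

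The source of the mismatch is that the two chunks of $H(w)$, of sizes $k$ and $m$, correspond to cycles of lengths $k+1$ and $m+1$ in $S_{n+1}$, so the conjugator must be the block transposition $[m+2,\ldots,k+m+2,1,\ldots,m+1]$ exchanging an initial block of $k+1$ positions with a terminal block of $m+1$ positions; its standard reduced word has $(k+1)(m+1)$ letters arranged as $m+1$ runs of length $k+1$. Your word is instead the block transposition for a $k$ versus $m+2$ split of the positions. Once the conjugator is corrected, your induction should run over the $m+1$ runs rather than $k$ rounds, and you should also address, as the paper does, the asymmetry between the cases $k>m$ and $k<m$ (the paper conjugates by $x^{-1}$ in the latter case). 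The bookkeeping concerns you raise at the end are real, but they are the same ones the paper's proof disposes of by exhibiting the generic intermediate heaps.
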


\begin{center} \begin{figure}[H] \centering
\begin{subfigure}{0.45\textwidth} \centering
\begin{tabular}{@{}c@{}c}
\begin{tikzpicture}
    \sq{0}{3};    \node at (0.5,2.5)   {\scalebox{0.8}{$1$}};
    \sq{0.5}{2};  \node at (1,1.5)     {\scalebox{0.8}{$2$}};
                  \node at (1.5,0.7)   {$\ddots$};
    \sq{1.5}{0};  \node at (2,-0.5)    {\scalebox{0.8}{$k$}};
\end{tikzpicture} &
\begin{tikzpicture}
    \sq{2.5}{3};  \node at (3,2.5)     {\scalebox{0.8}{$k+2$}};
    \sq{3}{2};    \node at (3.5,1.5)   {\scalebox{0.8}{$k+3$}};
                  \node at (4,0.7)     {$\ddots$};
    \sq{4}{0};    \node at (4.5,-0.5)  {\scalebox{0.8}{$k'+1$}};
\end{tikzpicture}
\end{tabular}
\caption{}\label{fig:swap1}
\end{subfigure}
\begin{subfigure}{0.45\textwidth} \centering
\begin{tabular}{@{}c@{}c} \centering
\begin{tikzpicture}
    \sq{0}{3};    \node at (0.5,2.5) {\scalebox{0.8}{$1$}};
    \sq{0.5}{2};  \node at (1,1.5)   {\scalebox{0.8}{$2$}};
                  \node at (1.5,0.7) {$\ddots$};
    \sq{1.5}{0};  \node at (2,-0.5)  {\scalebox{0.8}{$m$}};
\end{tikzpicture} &
\begin{tikzpicture}
    \sq{2.5}{3};  \node at (3,2.5)   {\scalebox{0.8}{$m+2$}};
    \sq{3}{2};    \node at (3.5,1.5) {\scalebox{0.8}{$m+3$}};
                  \node at (4,0.7)   {$\ddots$};
    \sq{4}{0};    \node at (4.5,-0.5){\scalebox{0.8}{$k'+1$}};
\end{tikzpicture} \end{tabular}
\caption{}\label{fig:swap2}
\end{subfigure}
\caption{The heaps for Lemma~\ref{lem:swap}.}\label{}
\end{figure}
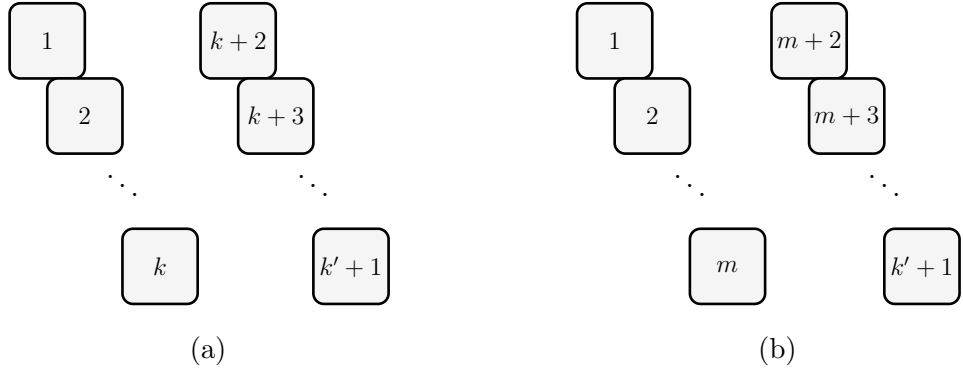 \end{center}

\begin{proof} We first consider the case $k>m$.
    Let $x \in W(A_n)$ have a reduced expression $\x$ that consists of $m+1$ ascending subwords of $k+1$ generators each, starting with $(m+1)(m+2) \cdots (k'+1)$ and being such that the sequence of first generators of each subword descends to 1 (as in Example~\ref{ex:swap}).
    That is, $$\x = \underbrace{(m+1)(m+2) \cdots (k'+1)}_{1} \underbrace{(m)(m+1) \cdots (k')}_{2} \cdots \underbrace{(2)(3) \cdots (k)}_{m} \underbrace{(1)(2) \cdots (k+1)}_{m+1}.$$
    Conjugate $w$ by $x$, and consider the heap $H({\x}{\w}{\x^{-1}})$, shown in Figure~\ref{fig:permheap1}, where $k'=k+m$ and \textcolor{orange}{orange} blocks correspond to the heap of $\x$, \textcolor{blue}{blue} blocks correspond to the heap of $w$, and \textcolor{ggreen}{green} blocks correspond to the heap of $\x^{-1}$.
    Now, to the heap in Figure~\ref{fig:permheap1}, we apply Lemma~\ref{lem:boomerang} to $$\textcolor{blue}{(1)(2) \cdots (k)} \textcolor{ggreen}{(k+1)(k) \cdots (2)(1)},$$ denoted by hatched blocks, to get the heap shown in Figure~\ref{fig:permheap2}.
    
    Then, as in the proof of Lemma~\ref{lem:slide}, apply Lemma~\ref{lem:stst} to the extra long $(k+2)(k+1)$-chain, denoted in the heap in Figure~\ref{fig:permheap2} as hatched blocks.
    This creates an extra long $(k+1)(k)$-chain.
    Continuing this process $k$ times, we get the heap shown in Figure~\ref{fig:permheap3} since the \textcolor{blue}{blue} $(1)(2)\cdots(k)(k+2)$, \textcolor{ggreen}{green} $(k+1)$, and \textcolor{orange}{orange} $(2)(3) \cdots (k+2)$ blocks in Figure~\ref{fig:permheap2} cancel via the iterations of Lemma~\ref{lem:stst} with extra long $s_is_j$-chains.
    
    Then, after $m$ steps as above, we get the heap shown in Figure~\ref{fig:permheap4}.
    Finally, we apply Lemma~\ref{lem:boomerang} to the blocks labeled $$\textcolor{ggreen}{(m+1)(m+2) \cdots (k')(k'+1)(k') \cdots (m+2)(m+1)},$$ denoted in the heap in Figure~\ref{fig:permheap4} by hatched blocks, to get the heap shown in Figure~\ref{fig:permheap5}.
    There are adjacent $k'+1$ blocks that cancel, denoted in the heap by checked blocks, followed by adjacent $k'$ blocks, and so on. After the cancellation, we get the heap shown in Figure~\ref{fig:permheap6} and the result follows.
    
    In the case where $k<m$, conjugate $w$ by $x^{-1}$, as given above, to obtain $y$.
\end{proof}

\begin{remark}\label{rem:permute} If the heap of $w$ is not simple, we can perform a sequence of cyclic shifts and applications of Lemma~\ref{lem:slide} and Remark~\ref{rem:slide} to obtain a simple heap, and, after applying Lemma~\ref{lem:swap}, we can reverse the cyclic shifts and applications of Lemma~\ref{lem:slide} and Remark~\ref{rem:slide}.
\end{remark}

\begin{center} \begin{figure}[htpb] \centering
\begin{tabular}{cc}
\begin{subfigure}{0.45\textwidth} \centering \vspace{124pt}
\begin{tikzpicture}[scale=0.82]
    \sqblhash{0}{9};   \node at (0.5,8.5)   {\scalebox{0.75}{$1$}};
    \sqblhash{0.5}{8}; \node at (1,7.5)     {\scalebox{0.75}{$2$}};
                       \node at (1.8,6.6)   {$\ddots$};
    \sqblhash{2}{6};   \node at (2.5,5.5)   {\scalebox{0.75}{$k$}};
    \sqbl{3}{6};       \node at (3.5,5.5)   {\scalebox{0.75}{$k+2$}};
    \sqbl{3.5}{5};     \node at (4,4.5)     {\scalebox{0.75}{$k+3$}};
                       \node at (4.8,3.65)  {$\ddots$};
    \sqbl{5}{3};       \node at (5.5,2.5)   {\scalebox{0.75}{$k'$}};
    \sqbl{5.5}{2};     \node at (6,1.5)     {\scalebox{0.75}{$k'+1$}};

    \sqor{2.5}{7};     \node at (3,6.5)     {\scalebox{0.75}{$k+1$}};
                       \node at (2.25,7.6)  {$\ddots$};
    \sqor{1}{9};       \node at (1.5,8.5)   {\scalebox{0.75}{$3$}};
    \sqor{0.5}{10};    \node at (1,9.5)     {\scalebox{0.75}{$2$}};
    \sqor{0}{11};      \node at (0.5,10.5)  {\scalebox{0.75}{$1$}};
    \sqor{3}{8};       \node at (3.5,7.5)   {\scalebox{0.75}{$k+2$}};
                       \node at (2.75,8.6)  {$\ddots$};
    \sqor{1.5}{10};    \node at (2,9.5)     {\scalebox{0.75}{$4$}};
    \sqor{1}{11};      \node at (1.5,10.5)  {\scalebox{0.75}{$3$}};
    \sqor{0.5}{12};    \node at (1,11.5)    {\scalebox{0.75}{$2$}};
                       \node at (2,12.5)    {$\iddots$};
                       \node at (4.5,8.5)   {$\iddots$};
                       \node at (3,11.5)    {$\iddots$};
                       \node at (5.1,11.6)  {$\ddots$};
                       \node at (4,10.5)    {$\cdots$};
                       \node at (4,9.5)     {$\cdots$};
    \sqor{3}{14};      \node at (3.5,13.5)  {\scalebox{0.75}{$m+1$}};
    \sqor{3.5}{13};    \node at (4,12.5)    {\scalebox{0.75}{$m+2$}};
    \sqor{5.5}{11};    \node at (6,10.5)    {\scalebox{0.75}{$k'+1$}};
    \sqor{5}{10};      \node at (5.5,9.5)   {\scalebox{0.75}{$k'$}};
    
    \sqgrhash{2.5}{5}; \node at (3,4.5)     {\scalebox{0.75}{$k+1$}};
    \sqgrhash{2}{4};   \node at (2.5,3.5)   {\scalebox{0.75}{$k$}};
    \sqgr{3}{4};       \node at (3.5,3.5)   {\scalebox{0.75}{$k+2$}};
                   
    \sqgr{4.5}{2};     \node at (5,1.5)     {\scalebox{0.75}{$k'-1$}};
    \sqgr{5}{1};       \node at (5.5,0.5)   {\scalebox{0.75}{$k'$}};
    \sqgr{5.5}{0};     \node at (6,-0.5)    {\scalebox{0.75}{$k'+1$}};
                       \node at (5,-1.5)    {$\iddots$};
    \sqgrhash{0.5}{2}; \node at (1,1.5)     {\scalebox{0.75}{$2$}};
    \sqgrhash{0}{1};   \node at (0.5,0.5)   {\scalebox{0.75}{$1$}};
    \sqgr{1}{1};       \node at (1.5,0.5)   {\scalebox{0.75}{$3$}};
    \sqgr{0.5}{0};     \node at (1,-0.5)    {\scalebox{0.75}{$2$}};
    \sqgr{1.5}{2};     \node at (2,1.5)     {\scalebox{0.75}{$4$}};
                       \node at (2,-1.5)    {$\ddots$};
    \sqgr{3}{-3};      \node at (3.5,-3.5)  {\scalebox{0.75}{$m+1$}};
    \sqgr{2.5}{-2};    \node at (3,-2.5)    {\scalebox{0.75}{$m$}};
    \sqgr{3.5}{-2};    \node at (4,-2.5)    {\scalebox{0.75}{$m+2$}};
                       \node at (1.8,2.5)   {$\iddots$};
                       \node at (2.8,2.5)   {$\iddots$};
                       \node at (3.5,1.5)   {$\cdots$};
                       \node at (3.5,0.5)   {$\cdots$};
                       \node at (3.5,-0.5)  {$\cdots$};
                       \node at (4.3,2.5)   {$\ddots$};
\end{tikzpicture}
\caption{}\label{fig:permheap1}
\end{subfigure} &
\begin{subfigure}{0.45\textwidth} \centering \vspace{-45pt}
\begin{tikzpicture}[scale=0.8]
    \sqbl{0}{5};        \node at (0.5,4.5)   {\scalebox{0.7}{$1$}};
    \sqbl{0.5}{6};      \node at (1,5.5)     {\scalebox{0.7}{$2$}};
    \sqbl{2}{8};        \node at (2.5,7.5)   {\scalebox{0.7}{$k$}};
    \sqblhash{3}{10};   \node at (3.5,9.5)   {\scalebox{0.7}{$k+2$}};
    \sqbl{3.5}{1};      \node at (4,0.5)     {\scalebox{0.7}{$k+3$}};
    \sqbl{3}{0};        \node at (3.5,-0.5)  {\scalebox{0.7}{$k+2$}};
                        \node at (5.1,-0.35) {$\ddots$};
    \sqbl{5}{-1};       \node at (5.5,-1.5)  {\scalebox{0.7}{$k'$}};
    \sqbl{5.5}{-2};     \node at (6,-2.5)    {\scalebox{0.7}{$k'+1$}};

    \sqorhash{2.5}{11}; \node at (3,10.5)    {\scalebox{0.7}{$k+1$}};
                        \node at (2.25,11.6) {$\ddots$};
    \sqor{1}{13};       \node at (1.5,12.5)  {\scalebox{0.7}{$3$}};
    \sqor{0.5}{14};     \node at (1,13.5)    {\scalebox{0.7}{$2$}};
    \sqor{0}{15};       \node at (0.5,14.5)  {\scalebox{0.7}{$1$}};
    \sqorhash{3}{12};   \node at (3.5,11.5)  {\scalebox{0.7}{$k+2$}};
                        \node at (2.75,12.8) {$\ddots$};
    \sqor{1.5}{14};     \node at (2,13.5)    {\scalebox{0.7}{$4$}};
    \sqor{1}{15};       \node at (1.5,14.5)  {\scalebox{0.7}{$3$}};
    \sqor{0.5}{16};     \node at (1,15.5)    {\scalebox{0.7}{$2$}};
                        \node at (2,16.5)    {$\iddots$};
                        \node at (4.5,12.5)  {$\iddots$};
                        \node at (2.75,16)   {$\iddots$};
                        \node at (5.1,15.6)  {$\ddots$};
                        \node at (4,14.5)    {$\cdots$};
                        \node at (4,13.5)    {$\cdots$};
    \sqor{3}{18};       \node at (3.5,17.5)  {\scalebox{0.7}{$m+1$}};
    \sqor{3.5}{17};     \node at (4,16.5)    {\scalebox{0.7}{$m+2$}};
    \sqor{5.5}{15};     \node at (6,14.5)    {\scalebox{0.7}{$k'+1$}};
    \sqor{5}{14};       \node at (5.5,13.5)  {\scalebox{0.7}{$k'$}};
                    
    \sqgrhash{2.5}{9};  \node at (3,8.5)     {\scalebox{0.7}{$k+1$}};
    \sqgr{2.5}{1};      \node at (3,0.5)     {\scalebox{0.7}{$k+1$}};
    \sqgr{2}{2};        \node at (2.5,1.5)   {\scalebox{0.7}{$k$}};
                        \node at (1.75,6.6)  {$\iddots$};
                        \node at (1.75,2.5)  {$\ddots$};
    \sqgr{3}{0};        \node at (3.5,-0.5)  {\scalebox{0.7}{$k+2$}};
    \sqgr{4.5}{-4};     \node at (5,-4.5)    {\scalebox{0.7}{$k'-1$}};
    \sqgr{5}{-3};       \node at (5.5,-3.5)  {\scalebox{0.7}{$k'$}};
    \sqgr{4.5}{-2};     \node at (5,-2.5)    {\scalebox{0.7}{$k'-1$}};
    \sqgr{5.5}{-4};     \node at (6,-4.5)    {\scalebox{0.7}{$k'+1$}};
                        \node at (5,-5.5)    {$\iddots$};
    \sqgr{0.5}{4};      \node at (1,3.5)     {\scalebox{0.7}{$2$}};
    \sqgr{1}{-3};       \node at (1.5,-3.5)  {\scalebox{0.7}{$3$}};
    \sqgr{0.5}{-4};     \node at (1,-4.5)    {\scalebox{0.7}{$2$}};
    \sqgr{1.5}{-2};     \node at (2,-2.5)    {\scalebox{0.7}{$4$}};
                        \node at (2,-5.5)    {$\ddots$};
    \sqgr{3}{-7};       \node at (3.5,-7.5)  {\scalebox{0.7}{$m+1$}};
    \sqgr{2.5}{-6};     \node at (3,-6.5)    {\scalebox{0.7}{$m$}};
    \sqgr{3.5}{-6};     \node at (4,-6.5)    {\scalebox{0.7}{$m+2$}};
                        \node at (2.7,-1.4)  {$\iddots$};
                        \node at (3.5,-2.5)  {$\cdots$};
                        \node at (3.5,-3.5)  {$\cdots$};
                        \node at (3.5,-4.5)  {$\cdots$};
                        \node at (4.4,-1.4)  {$\ddots$};
\end{tikzpicture}
\caption{}\label{fig:permheap2}
\end{subfigure}
\end{tabular}
\caption{The heaps for Lemma~\ref{lem:swap}.}\label{}
\end{figure}
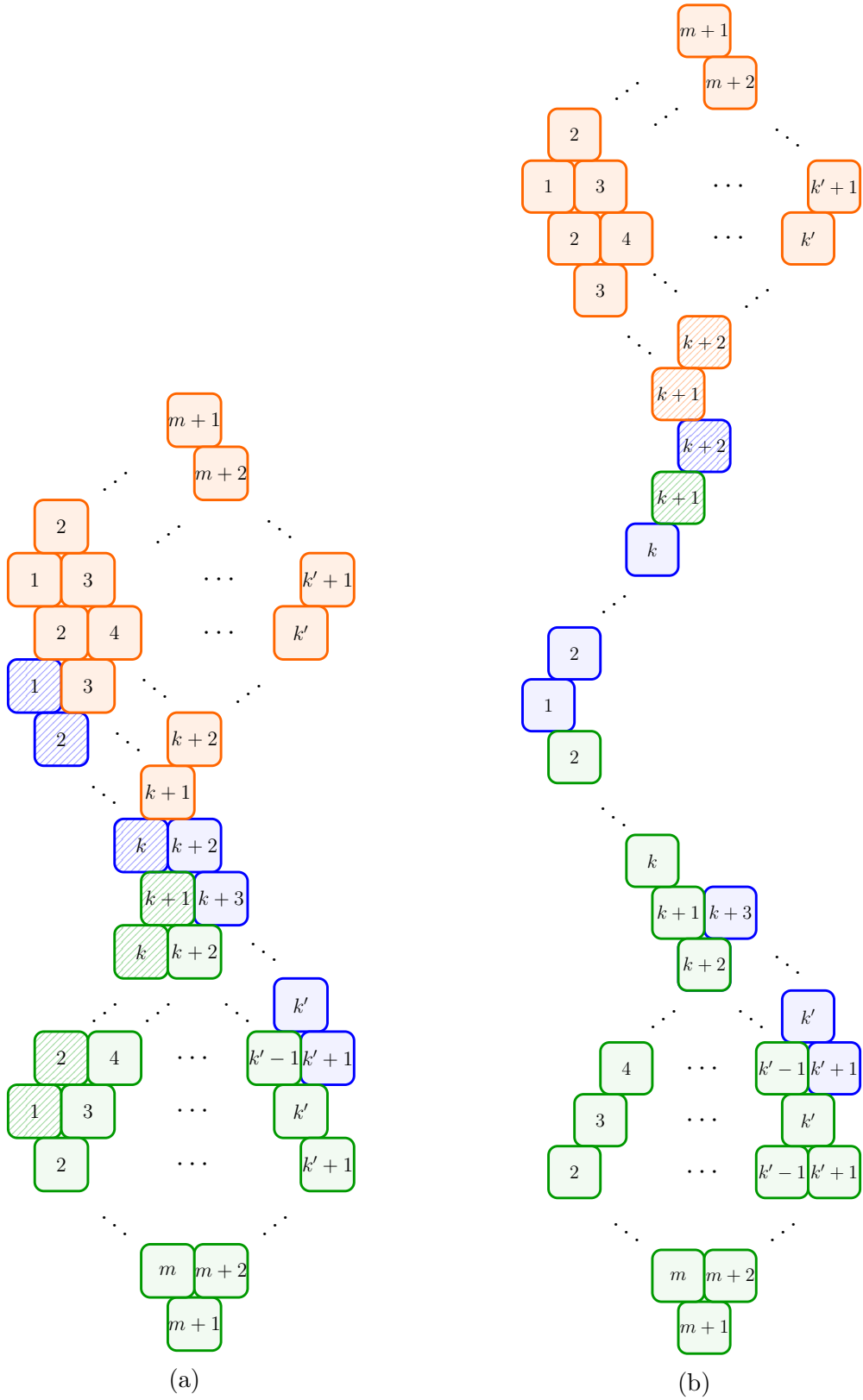
\begin{figure}[htpb] \centering
\ContinuedFloat
\begin{tabular}{cc}
\begin{subfigure}{0.45\textwidth} \centering
\begin{tikzpicture}
    \sqbl{0}{13};   \node at (0.5,12.5)  {\scalebox{0.85}{$1$}};
                    \node at (5,3.5)     {$\iddots$};
    \sqgr{0.5}{12}; \node at (1,11.5)    {\scalebox{0.85}{$2$}};
    
    \sqbl{3}{10};   \node at (3.5,9.5)   {\scalebox{0.85}{$k+2$}};
    \sqbl{3.5}{9};  \node at (4,8.5)     {\scalebox{0.85}{$k+3$}};
                    \node at (4.8,7.65)  {$\ddots$};
    \sqbl{5}{7};    \node at (5.5,6.5)   {\scalebox{0.85}{$k'$}};
    \sqbl{5.5}{6};  \node at (6,5.5)     {\scalebox{0.85}{$k'+1$}};

    \sqor{2.5}{11}; \node at (3,10.5)    {\scalebox{0.85}{$k+1$}};
    \sqor{3}{12};   \node at (3.5,11.5)  {\scalebox{0.85}{$k+2$}};
                    \node at (2.25,11.6) {$\ddots$};
                    \node at (3,12.7) {$\ddots$};
    \sqor{1}{13};   \node at (1.5,12.5)  {\scalebox{0.85}{$3$}};
    \sqor{0.5}{14}; \node at (1,13.5)    {\scalebox{0.85}{$2$}};
    \sqor{0}{15};   \node at (0.5,14.5)  {\scalebox{0.85}{$1$}};
    \sqor{1}{15};   \node at (1.5,14.5)  {\scalebox{0.85}{$3$}};
    \sqor{1.5}{14}; \node at (2,13.5) {\scalebox{0.85}{$4$}};
                    \node at (4.7,12.6)  {$\iddots$};
                    \node at (2.3,15.6)  {$\iddots$};
                    \node at (3,14.8)    {$\iddots$};
                    \node at (5.2,14.8)  {$\ddots$};
                    \node at (4,13.5)  {$\cdots$};
    \sqor{3}{17};   \node at (3.5,16.5)  {\scalebox{0.85}{$m+1$}};
    \sqor{3.5}{16}; \node at (4,15.5)    {\scalebox{0.85}{$m+2$}};
    \sqor{5.5}{14}; \node at (6,13.5)    {\scalebox{0.85}{$k'+1$}};
    \sqgr{2.5}{9};  \node at (3,8.5)     {\scalebox{0.85}{$k+1$}};
    \sqgr{2}{10};   \node at (2.5,9.5)   {\scalebox{0.85}{$k$}};
                    \node at (1.75,10.5) {$\ddots$};
    \sqgr{3}{8};    \node at (3.5,7.5)   {\scalebox{0.85}{$k+2$}};
                    
    \sqgr{4.5}{4};  \node at (5,3.5)     {\scalebox{0.85}{$k'-1$}};
    \sqgr{5}{5};    \node at (5.5,4.5)   {\scalebox{0.85}{$k'$}};
    \sqgr{4.5}{6};  \node at (5,5.5)     {\scalebox{0.85}{$k'-1$}};
    \sqgr{5.5}{4};  \node at (6,3.5)     {\scalebox{0.85}{$k'+1$}};

    \sqgr{1}{5};    \node at (1.5,4.5)   {\scalebox{0.85}{$3$}};
    \sqgr{0.5}{4};  \node at (1,3.5)     {\scalebox{0.85}{$2$}};
    \sqgr{1.5}{6};  \node at (2,5.5)     {\scalebox{0.85}{$4$}};
                    \node at (2,2.5)     {$\ddots$};
                    \node at (5,2.5)     {$\iddots$};
    \sqgr{3}{1};    \node at (3.5,0.5)   {\scalebox{0.85}{$m+1$}};
    \sqgr{2.5}{2};  \node at (3,1.5)     {\scalebox{0.85}{$m$}};
    \sqgr{3.5}{2};  \node at (4,1.5)     {\scalebox{0.85}{$m+2$}};
                    \node at (2.8,6.5)   {$\iddots$};
                    \node at (3.5,5.5)   {$\cdots$};
                    \node at (3.5,4.5)   {$\cdots$};
                    \node at (3.5,3.5)   {$\cdots$};
                    \node at (4.3,6.5)   {$\ddots$};
\end{tikzpicture}
\caption{}\label{fig:permheap3}
\end{subfigure} &
\begin{subfigure}{0.45\textwidth} \centering \vspace{30pt}
\begin{tikzpicture}
    \sqor{0}{13};   \node at (0.5,12.5)  {\scalebox{0.75}{$1$}};
    \sqor{0.5}{12}; \node at (1,11.5)    {\scalebox{0.75}{$2$}};
    \sqor{2.5}{9};  \node at (3,8.5)     {\scalebox{0.75}{$k+1$}};
    \sqor{2}{10};   \node at (2.5,9.5)   {\scalebox{0.75}{$k$}};
                    \node at (1.85,10.6) {$\ddots$};
    \sqbl{3}{8};    \node at (3.5,7.5)   {\scalebox{0.75}{$k+2$}};
                    \node at (4.25,6.75) {$\ddots$};
    \sqbl{5}{5};    \node at (5.5,4.5)   {\scalebox{0.75}{$k'$}};
    \sqbl{4.5}{6};  \node at (5,5.5)     {\scalebox{0.75}{$k'-1$}};
    \sqbl{5.5}{4};  \node at (6,3.5)     {\scalebox{0.75}{$k'+1$}};
    \sqgrhash{3}{6};    \node at (3.5,5.5)   {\scalebox{0.75}{$m+2$}};
    \sqgrhash{2.5}{7};  \node at (3,6.5)     {\scalebox{0.75}{$m+1$}};
                    \node at (4.25,4.75) {$\ddots$};
    \sqgrhash{5}{3};    \node at (5.5,2.5)   {\scalebox{0.75}{$k'$}};
    \sqgrhash{4.5}{4};  \node at (5,3.5)     {\scalebox{0.75}{$k'-1$}};
    \sqgrhash{5.5}{2};  \node at (6,1.5)     {\scalebox{0.75}{$k'+1$}};
    \sqgrhash{5}{1};    \node at (5.5,0.5)   {\scalebox{0.75}{$k'$}};
                    \node at (4.75,-0.5) {$\iddots$};
    \sqgrhash{3.5}{-1}; \node at (4,-1.5)    {\scalebox{0.75}{$m+2$}};
    \sqgrhash{3}{-2};   \node at (3.5,-2.5)  {\scalebox{0.75}{$m+1$}};
\end{tikzpicture}
\caption{}\label{fig:permheap4}
\end{subfigure}
\end{tabular}
\caption{The heaps for Lemma~\ref{lem:swap} (continued).}\label{}
\end{figure}
\begin{figure}[htpb] \centering
\ContinuedFloat
\begin{tabular}{cc} \centering
\begin{subfigure}{0.5\textwidth} \centering
\begin{tikzpicture}
    \sqor{0}{13};   \node at (0.5,12.5)  {\scalebox{0.75}{$1$}};
    \sqor{0.5}{12}; \node at (1,11.5)    {\scalebox{0.75}{$2$}};
    \sqor{2.5}{9};  \node at (3,8.5)     {\scalebox{0.75}{$k+1$}};
    \sqor{2}{10};   \node at (2.5,9.5)   {\scalebox{0.75}{$k$}};
                    \node at (1.85,10.6) {$\ddots$};
    \sqbl{3}{8};    \node at (3.5,7.5)   {\scalebox{0.75}{$k+2$}};
                    \node at (4.4,6.6) {$\ddots$};
    \sqbl{5}{6};    \node at (5.5,5.5)   {\scalebox{0.75}{$k'$}};
    \sqblcheck{5.5}{5};  \node at (6,4.5) {\scalebox{0.75}{$k'+1$}};
    \sqgrcheck{5.5}{4};  \node at (6,3.5) {\scalebox{0.75}{$k'+1$}};
    \sqgr{3}{1};    \node at (3.5,0.5)    {\scalebox{0.75}{$m+2$}};
    \sqgr{2.5}{0};  \node at (3,-0.5)     {\scalebox{0.75}{$m+1$}};
                    \node at (4.4,1.7)    {$\iddots$};
    \sqgr{5}{3};    \node at (5.5,2.5)    {\scalebox{0.75}{$k'$}};
    \sqgr{5.5}{-4}; \node at (6,-4.5)     {\scalebox{0.75}{$k'+1$}};
    \sqgr{5}{-3};   \node at (5.5,-3.5)   {\scalebox{0.75}{$k'$}};
                    \node at (4.4,-2.5)   {$\ddots$};
    \sqgr{3}{-1};   \node at (3.5,-1.5)   {\scalebox{0.75}{$m+2$}};
\end{tikzpicture}
\caption{}\label{fig:permheap5}
\end{subfigure} &
\begin{subfigure}{0.5\textwidth} \centering \vspace{4.3in}
\begin{tikzpicture}
    \sqor{0}{3};    \node at (0.5,2.5)   {\scalebox{0.85}{$1$}};
    \sqor{0.5}{2};  \node at (1,1.5)     {\scalebox{0.85}{$2$}};
                    \node at (1.5,0.7)   {$\ddots$};
    \sqor{1.5}{0};  \node at (2,-0.5)    {\scalebox{0.85}{$m$}};
    \sqgr{2.5}{0};  \node at (3,-0.5)    {\scalebox{0.85}{$m+2$}};
    \sqgr{3}{-1};   \node at (3.5,-1.5)  {\scalebox{0.85}{$m+3$}};
                    \node at (4.25,-2.25){$\ddots$};
    \sqgr{4}{-3};   \node at (4.5,-3.5)  {\scalebox{0.85}{$k'+1$}};
\end{tikzpicture}
\caption{}\label{fig:permheap6}
\end{subfigure}
\end{tabular}
\caption{The heaps for Lemma~\ref{lem:swap} (continued).}
\end{figure}
\end{center}

    We are now ready to prove Theorem~\ref{thm:conjiffring}.

\begin{proof}[Proof of Theorem~\ref{thm:conjiffring}] Suppose $w,y \in \CFC(A_n)$ are conjugate.
    Note that every chunk of size $\ell$ in $W(A_n)$ corresponds to a cycle of length $\ell+1$ with connected support in $S_{n+1}$.
    In particular, the chunk that corresponds to the group element $(k)(k+1) \cdots (k+m)$ corresponds to the $(m+2)$-cycle $(k~k+1~ \cdots ~ k+m~k+m+1)$.
    By assumption, as permutations, $w$ and $y$ in $S_{n+1}$ have the same cycle type. Suppose $w$ and $y$ each consist of products of disjoint cycles of lengths $k_{1}, k_{2}, \ldots, k_{s}$. In this case, it is not possible for $H(w)$ and $H(y)$ to have a different number of chunks. Furthermore, there are $n$ chunks of size $k$ in $H(w)$ if and only if there are $n$ chunks of size $k$ in $H(y)$. 
    Then both of $H(w)$ and $H(y)$ consist of chunks of sizes $k_{1}-1,k_{2}-1,\ldots,k_{s}-1$. That is, for every ring $R$ in $\hat{H}(w)$, there is a corresponding ring $R'$ in $\hat{H}(y)$. Then, we can permute and slide rings in $\hat{H}(w)$ as necessary to obtain $\hat{H}(y)$.
    Hence $\hat{H}(w)$ and $\hat{H}(y)$ are ring equivalent.

    Now, suppose $\hat{H}(w)$ and $\hat{H}(y)$ are ring equivalent. Then there exists some sequence of cyclic shifts, slides, and permutations of chunks that takes $\hat{H}(w)$ to $\hat{H}(y)$. We can perform these operations via conjugation, as in Lemmas~\ref{lem:slide} and~\ref{lem:swap} and Remarks~\ref{rem:slide} and~\ref{rem:permute}. Hence $w$ and $y$ are conjugate.
\end{proof}

    In the future, we hope to be able to generalize the notion of chunks and rings to CFC elements of Coxeter groups of types other than $A_n$ in order to have a result analogous to Theorem~\ref{thm:conjiffring}.
    We will need a different proof for an analogous theorem in Coxeter group of types other than $A_n$ since we used cycle type in the argument for the forward direction of the proof of Theorem~\ref{thm:conjiffring}.

\bibliographystyle{plain}
\bibliography{References}

\end{document}